\numberwithin{equation}{section}
\newtheorem{theorem}{Theorem}[section]
\newtheorem{proposition}[theorem]{Proposition}
\newtheorem{lemma}[theorem]{Lemma}
\DeclareMathAlphabet{\mathpzc}{OT1}{pzc}{m}{it}
{\theoremstyle{definition}
\newtheorem{definition}[theorem]{Definition}
\newtheorem{remark}[theorem]{Remark}

\newtheorem{question}[theorem]{Question}

}
\newcommand{\dub}{D_{\overline{B}*}}
\definecolor{aquam}{rgb}{0.5,1.0,1.0}
\definecolor{bbrown}{rgb}{0.75,0.38,0.15}
\definecolor{Cyan}{rgb}{0,0.6,0.6}
\definecolor{Darkblue}{rgb}{0,0,1}
\definecolor{Dodgerblue2}{rgb}{0,0.5,1}
\definecolor{Green}{rgb}{0,0.6,0.06}
\definecolor{Kahki}{rgb}{1,1,0.5}
\definecolor{Magenta}{rgb}{0.7,0,0.7}
\definecolor{bMagenta}{rgb}{1,.6,1}
\definecolor{Orange}{rgb}{0.8,0.3,0}
\definecolor{dOrchid}{rgb}{0.7,0.2,0.4}
\definecolor{Orchid}{rgb}{1,0.5,1}
\definecolor{Purple}{rgb}{0.65,0.07,0.85}
\definecolor{Royalblue}{rgb}{0.6,0.85,0.87}
\definecolor{Tan}{rgb}{0.54,0.42,0.23}
\definecolor{bTan}{rgb}{0.94,0.82,0.63}
\definecolor{zoltan}{rgb}{0,0.1,0.3}
\definecolor{Turquoise}{rgb}{0,0.85,0.87}
\definecolor{Yellow}{rgb}{1,1,0}
\definecolor{darkamber}{rgb}{0.4,0.19,0.28}
\definecolor{bYellow}{rgb}{1,1,0.6}
\definecolor{bRed}{rgb}{1,0.7,0.7}
\definecolor{boxcolb}{rgb}{0.87,0.77,0.75}
\definecolor{boxcol}{rgb}{0.6,0.85,0.87}
\definecolor{boxcolgreen}{rgb}{0.64,0.93,0.79}
\definecolor{boxcolaa}{rgb}{.75,.99,.70}
\definecolor{boxcolbb}{rgb}{0.39,0.50,0.56}
\definecolor{boxcolcc}{rgb}{1,0.81,0.65}
\definecolor{yy}{rgb}{0.43,0.21,.18}
\definecolor{gA}{gray}{0.5}
\definecolor{gB}{gray}{0.8}
\definecolor{gC}{gray}{0.9}
\newcommand{\xcx}{\color{black}}
\begin{document}

\title{Box dimension of generic H\"older level sets}
\author{Zolt\'an Buczolich$^*$}
\address[Z. Buczolich, B. Maga]{Department of Analysis, ELTE E\"otv\"os Lor\'and University, 
P\'azm\'any P\'eter S\'et\'any 1/c, 1117 Budapest, Hungary}
\address[B. Maga]{Alfr\'ed R\'enyi Institute of Mathematics, Re\'altanoda street 13-15, 1053 Budapest, Hungary}
\email{zoltan.buczolich@ttk.elte.hu}
\urladdr{http://buczo.web.elte.hu, ORCID Id: 0000-0001-5481-8797}

\author{Bal\'azs Maga$^\text{\textdagger}$}
\email{mbalazs0701@gmail.com}
\urladdr{  http://magab.web.elte.hu/}


\thanks{\scriptsize $^*$
 This author was also supported by the Hungarian National Research, Development and Innovation Office--NKFIH, Grant 124003.
}

\thanks{\scriptsize $^\text{\textdagger}$ This author was supported by the \'UNKP-22-3 New National Excellence of the Hungarian Ministry of Human Capacities, and by the 
Hungarian National Research, Development and Innovation Office-NKFIH, Grant 124749.
 \newline\indent {\it Mathematics Subject
Classification:} Primary :   28A78,  Secondary :  26B35, 28A80. 
\newline\indent {\it Keywords:}   H\"older continuous function, Hausdorff dimension, box dimension, level set, Sierpi\'nski triangle.}


\date{\today}

\begin{abstract}
    Hausdorff dimension of level sets of generic continuous functions defined on fractals 
    can give information about the "thickness/narrow cross-sections" of a ‘‘network’’ corresponding to a fractal set, $F$.
    This lead to the definition of the topological Hausdorff dimension of fractals.
    Finer information might be obtained by considering the Hausdorff dimension of level sets of generic $1$-H\"older-$\aaa$ functions, 
    which has a stronger dependence on the geometry of the fractal, as displayed in our previous papers.
    In this paper, we extend our investigations to the lower and upper box-counting dimension as well: while the former yields results highly
    resembling the ones about Hausdorff dimension of level sets, the latter exhibits a different behaviour. 
	Instead of "finding narrow-cross sections", results related to upper box-counting dimension try to "measure" how much level sets can spread out on the fractal, how widely the generic function can "oscillate" on it.
    Key differences are illustrated by giving estimates concerning the Sierpiński triangle.
\end{abstract}

\maketitle

\setcounter{tocdepth}{3}

\tableofcontents


\section{Introduction}

Level sets of continuous functions defined on some domain $F$ convey information about the dimensionality of $F$. Roughly speaking, a generic continuous function should have large level sets
if $F$ itself is dense in the everyday sense, and small otherwise. This intuition prompts the investigation of the size of such level sets.

In \cite{BK}, Hausdorff dimension of level sets of the generic continuous function defined on $[0, 1]^p$ was determined. This result was vastly generalized in \cite{BBEtoph}: 
the concept of topological Hausdorff dimension was introduced, and
it was shown that the size of the level sets is governed by the so-called topological Hausdorff dimension of the domain. In \cite{BBElevel}, the connection 
between the topological Hausdorff dimension and the level sets of generic continuous functions on fractals was investigated more precisely. We note that topological Hausdorff dimension
sparked interest on its own right: in \cite{MaZha}  the authors studied topological Hausdorff dimension of fractal squares, and it was also mentioned and used in Physics papers, see for example
\cite{Balankintoph}, \cite{Balankinfracspace}, \cite{Balankintransport}, \cite{Balankintoph2}, and  \cite{Balankinfluid}.

A shortcoming of continuous functions is that they are determined by solely the topology of $F$. This independence from the metric structure is not inherited by the Hausdorff dimension of the level sets, 
as the Hausdorff dimension is an inherently metric concept. Nevertheless, it is a natural expectation that the investigation of other classes of functions might be more meaningful in this context,
and provide more information about the domain. To this end, we would like to rule out the possibility of compressing level sets arbitrarily well to the ``thinnest'' regions of the domain.
The simplest and most natural way to achieve this goal is considering H\"older functions instead of arbitrary continuous functions.
More specifically, we investigate the level sets of generic 1-H\"older-$\aaa$ functions, where genericity is understood in the Baire category sense in the topology given by the supremum norm.

We started to deal with this question in \cite{sier}. We defined $D_{*}(\aaa, F)$ as the essential supremum 
of the Hausdorff dimensions of the level sets of a generic
$1$-H\"older-$\aaa$ function defined on $F$, and verified that this quantity is well-defined for compact $F$.
We also showed that if $\alpha\in(0,1)$, then for connected self{-}{}similar sets, like the Sierpi\'nski triangle
$D_{*}(\aaa, F)$ equals  the Hausdorff dimension 
of almost every level-set in the range of  a generic 1-H\"older-$\aaa$ function, that is, 
one can talk about the Lebesgue-typical dimension of the generic 1-H\"older-$\aaa$ function.
We extended the notation $D_{*}(\aaa, F)$ to $\aaa = 0$ by taking the Lebesgue-typical dimension of the generic continuous function.
In our second paper \cite{sierc}, we provided results of a more quantitative nature, giving lower and upper estimates for $D_{*}(\aaa, \Delta)$ for
the Sierpi\'nski triangle $\Delta$.

In this paper, our goal is to define analogous quantities for the generic lower and upper box dimension of level sets. As usual,
lower box dimension shows strong analogy with Hausdorff dimension, and most of our previous results about the Hausdorff dimension of generic level sets apply to
the lower box dimension of generic level sets: the behaviour of any of them is determined by how well
the majority of the level sets passing through the domain can be squeezed into the thinnest parts of the fractal.
While their theory might diverge, it seems to be very difficult to prove different estimates about them on nice domains, such as self-similar sets satisfying the open set condition.
We will explain this intuition in more detail in Section \ref{sec:box}. Therefore, we will not put much emphasis on the investigation of lower box dimension.
Instead, we will be primarily concerned with upper box dimension, which behaves quite differently: 
taking $\limsup$ instead of $\liminf$ means that instead of searching the "narrowest" parts of the fractal this time we are interested in how much level sets and "level regions" 
can be spread over the fractal.  
This dichotomy results in vastly different quantities: as one increases $\alpha$, in terms of Hausdorff dimension and lower box dimension, generic level sets get larger
as the loss of compressibility reduces the ability to squeeze level sets to 
narrow "cross section" regions of the fractal. However, in terms of upper box dimension generic level sets get "smaller" for higher values of $\alpha$.
This time due to the fact that the "generic" Hölder-$\alpha$ functions can oscillate less rapidly and this way generate less many "level regions".
In a vague heuristic way we can compare the situation to the coarea formula 
\cite{Federer} which roughly states that for a Lipschitz function $f$ defined on an open set $G\sse \R^n$ one has $\ds \int_{G}||\gr f(x)||dx=\int_{-\oo}^{+\oo}
\cah^{n-1}(f^{-1}(r))dr$, that is if the gradient is large, then the integral of the 
$n-1$-dimensional Hausdorff measure of the level sets is also large, that is,
if the function is changing rapidly then we have "large" level sets.
In our paper we work with much less regular functions than Lipschitz functions, and our domains are more complicated than an open set. But the heuristic 
still works: the smaller $\aaa$, the wilder the generic function can oscillate, the larger the ``gradient" and the ``size" of the
generic level set is also larger.

The relevant notation and certain preliminary results are introduced in Section \ref{sec:prelimin}.

In Section \ref{sec:box}, we prove qualitative results concerning the lower and upper box dimension of generic level sets. 

In Section \ref{sec:sier_triangle_box}, these results are used for giving estimates concerning the upper box dimension of
level sets of Hölder functions defined on the Sierpiński triangle.

The paper is concluded by Section \ref{sec:open_problems}, in which we present a collection of open problems.


\section{Notation and preliminaries} \label{sec:prelimin}

 The interior of a set $H$ is denoted by  $\inte{H}$. The $t$-dimensional Hausdorff measure is denoted by $\mathcal{H}^t$. 

Following \cite{sier}, we introduce the following notation: if $(M,\mathpzc{d})$ 
is a metric space, then let $C(M)$ be the space of continuous functions  from $M$ to $\R$.   
For every $\alpha\in(0,1]$ and $c>0$ set
$$
C_{c}^{\alpha}(M) := \{ f\in C(M) : \text{for every $a,b\in M$ we have $|f(a)-f(b)|\le c(\mathpzc{d}(a,b))^\alpha$}\}
$$
and 
$$
C_{c^-}^{\alpha}(M) := \Union_{c'\in [0,c)} C_{c',\alpha}(M).
$$

Moreover, let $D^f(r,F)=D^f(r)=\dim_H(f^{-1}(r))$ for any function $f: F\to \mathbb{R}$,
that is $D^f(r)$ denotes the Hausdorff dimension of the function $f$ at level $r$.
We put 
\begin{displaymath}
D_{*}^f(F)=\sup\{d: \lambda\{r : D^f(r,F)\geq{d}\}>0\},
\end{displaymath}
where $\lambda$ denotes the one-dimensional Lebesgue measure, that is $D_{*}^f(F)$ is the essential supremum of Hausdorff dimension of level sets.
We denote by $\mg_{1,\aaa}(F)$, or by simply $\mg_{1,\aaa}$ the  set  of dense $G_{\ddd}$
sets in $C_{1}^{\aaa}(F)$.

We put
\begin{equation}\label{*defDcsaF}
D_{*}(\aaa,F)=\sup_{\cag\in \mg_{1,\aaa}}\inf\{ D_{*}^{f}(F):f\in \cag \}.
\end{equation}

This quantity was the main object of interest in \cite{sier} and \cite{sierc}. In \cite{sier}, the following simplification was proved.


\begin{theorem} \label{*thmgenex}
    If   $0< \aaa\leq 1$  and $F\subset\R^p$ is compact, then there is a dense $G_\delta$ subset $\cag$ of $C_1^\alpha(F)$ 
    such that for every $f\in\cag$ we have $D_*^f(F) = D_*(\alpha,F)$.
\end{theorem}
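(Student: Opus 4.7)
Let $D_0 := D_*(\aaa, F)$. The plan is to produce the dense $G_\delta$ set $\cag$ by proving two one-sided bounds separately and intersecting them: first a dense $G_\delta$ on which $D_*^f(F) \geq D_0$, then one on which $D_*^f(F) \leq D_0$.

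\textbf{Part 1 (lower bound).} First I would note that $C_1^\aaa(F)$ is a closed subspace of $C(F)$ under the supremum norm — uniform limits preserve the $1$-H\"older-$\aaa$ condition — hence it is a complete metric space and Baire's theorem applies. From the definition of $D_0$ as a supremum over $\mg_{1,\aaa}(F)$, for each $n \in \mathbb{N}$ one can pick $\cag_n \in \mg_{1,\aaa}(F)$ with $\inf_{f \in \cag_n} D_*^f(F) \geq D_0 - 1/n$. Baire's theorem then yields a dense $G_\delta$ set $\cag_\infty := \bigcap_n \cag_n$ on which $D_*^f(F) \geq D_0$ for every $f$.

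\textbf{Part 2 (upper bound).} Next I would refine $\cag_\infty$ by removing the functions for which the inequality is strict. Writing
\[
\{f : D_*^f(F) > D_0\} = \bigcup_{n \in \mathbb{N}} A_{D_0 + 1/n}, \qquad A_d := \{f \in C_1^\aaa(F) : D_*^f(F) \geq d\},
\]
it suffices to prove that $A_d$ is meager for every rational $d > D_0$. By the definition of $D_0$ as a supremum, $A_d$ \emph{cannot} contain any dense $G_\delta$ of $C_1^\aaa(F)$ — otherwise $D_0 \geq d$. To promote this to genuine meagerness, I would show that $A_d$ has the Baire property, by expressing $A_d$ as a countable Boolean combination of Borel sets via a measurability analysis of $f \mapsto D_*^f(F)$ (which in turn reduces to that of level-set Hausdorff measures $\cah^t(f^{-1}(E))$). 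If $A_d$ were non-meager, its Baire property would supply an open ball $B \subset C_1^\aaa(F)$ in which $A_d$ is comeager; then by the invariance $D_*^{f+c}(F) = D_*^f(F)$ under constant shifts, together with perturbations by suitably rescaled small $1$-H\"older-$\aaa$ functions, one would extend this to a dense $G_\delta$ of all of $C_1^\aaa(F)$ contained in $A_d$, contradicting $d > D_0$.

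\textbf{Main obstacle.} The hard step is entirely Part 2, and within it the \emph{local-to-global} promotion. The homeomorphism $f \mapsto f+c$ preserves both the $1$-H\"older-$\aaa$ class and $D_*^f$, but it only sweeps an initial ball $B$ across a one-parameter family of balls and does not cover $C_1^\aaa(F)$. A genuine density argument — for instance, adding appropriately rescaled small $1$-H\"older-$\aaa$ perturbations which lie in arbitrarily small sup-norm neighbourhoods while preserving the Hölder constraint — is what is needed to complete the extension. Verifying the Borel structure of $A_d$ needed to invoke the Baire property is the other technical point, but it is expected to be routine compared with the extension step.
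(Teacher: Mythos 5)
Your Part 1 is exactly the easy direction of the paper's argument, and it is fine. The difficulty is Part 2, and there your route diverges from the paper's and has a genuine gap at the step you yourself flag as the ``main obstacle.''

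Your plan is to prove $A_d$ is meager for every rational $d > D_0$ by establishing a Baire property for $A_d$ and then a local-to-global promotion: ``comeager in some ball $B$'' to ``comeager everywhere.'' But the promotion step as sketched cannot work. Constant shifts $f \mapsto f+c$ do preserve $D_*^f$ but only sweep $B$ along a one-parameter family; and adding a non-constant small $1$-H\"older-$\aaa$ perturbation $g$ to $f$ changes level sets in a way that does \emph{not} preserve $D_*^f$ — there is no reason $D_*^{f+g}(F) = D_*^f(F)$. So the invariance you would need to carry $A_d$ from one ball to another is simply absent, and no ``rescaling'' fixes this. (The Borel/Baire-property claim for $f \mapsto D_*^f(F)$, which you treat as routine, would also need real work, but it is moot since the promotion fails.)

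The paper (in \cite{sier}, with the structure visible in this paper's proof of Theorem \ref{thm:upper_box_generic}) avoids both of these issues. It does \emph{not} try to show $A_d$ meager. Instead it observes only that the good set $G_k := \{f \in \cag_0 : D_*^f(F) \leq D_0 + 1/k\}$ cannot be nowhere dense — otherwise $\cag_0 \setminus \mathrm{cl}(G_k)$ is a dense $G_\delta$ on which $\inf D_*^f > D_0$, contradicting the supremum defining $D_0$. This gives a ball $B(f_1,\delta_1)$ in which $G_k$ is dense. The crucial idea you are missing is then \emph{localization}: cover $F$ by finitely many pieces $H_j$ of diameter $\delta_2$ with $\delta_2^\aaa \ll \delta_1$, so on each $E = H_j \cap F$ any $1$-H\"older-$\aaa$ function has image of tiny diameter. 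This allows a min--max clipping construction, $g_1 = \min\{\max\{g_0 - g_0(a) + f_1(a),\, f_1 - \delta_1/2\},\, f_1 + \delta_1/2\}$, which stays $1$-H\"older-$\aaa$, lands in $B(f_1,\delta_1)$, and on $E$ is a pure vertical shift of $g_0$. One then approximates $g_1$ on $E$ by some $g_2 \in G_k$, and clips again to produce $g_3$ close to the original $g_0$ yet agreeing on $E$ with a vertical shift of $g_2$. Since vertical shifts leave level sets (hence $D_*^{\cdot}(E)$) unchanged, $D_*^{g_3}(E) = D_*^{g_2}(E) \leq D_k$. This gives density of such $g_3$, and then Lemma \ref{*lemdfb} — which is precisely the local-to-global device you need, and is \emph{assumed} here as prior work — upgrades density to a dense $G_\delta$ and combines the finitely many pieces $H_j$. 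In short: localize so the transplant stays inside the H\"older ball, transplant by clipping so only vertical shifts (not arbitrary perturbations) enter, and invoke Lemma \ref{*lemdfb} to pass from pieces to $F$. These three ingredients are what your Part 2 needs and currently lacks.
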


This result relied on the following lemma:


\begin{lemma}\label{*lemdfb}
    Suppose that   $0< \aaa\leq 1$,   $F\subset\R^p$ is compact, $E\subset\R^p$ is open or closed, and $\cau\subset C_1^\alpha(F)$ is open.
    If $\{f_1,f_2,\ldots\}$ is a countable dense subset of $\cau$, then there is a dense $G_\delta$ subset $\cag$ of $\cau$ such that 
    \begin{equation}\label{eqdfb_hausdorff}
    \sup_{f\in\cag} D_*^f(F\cap E) \le \sup_{k\in\N} D_*^{f_k}(F\cap E).
    \end{equation}
\end{lemma}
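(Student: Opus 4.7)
Let $s_0 := \sup_k D_*^{f_k}(F \cap E)$; if $s_0 = +\infty$ there is nothing to prove, so I assume $s_0 < \infty$. The aim is a dense $G_\delta$ set $\mathcal{G} \subseteq \mathcal{U}$ such that $D_*^f(F \cap E) \le s_0$ for every $f \in \mathcal{G}$. My plan is to realize $\mathcal{G}$ in the form $\bigcap_m V_m$ where each $V_m$ is a union of carefully sized neighborhoods of the $f_k$. Before doing so I would dispose of the open-$E$ case by writing $F \cap E$ as an increasing union $\bigcup_n K_n$ of compact sets (e.g.\ $K_n = \{x \in F : \mathrm{dist}(x, F \setminus E) \ge 1/n\}$) and using $D_*^h(\bigcup_n K_n) = \sup_n D_*^h(K_n)$, which follows from the identity $\dim_H(h^{-1}(r) \cap \bigcup_n K_n) = \sup_n \dim_H(h^{-1}(r) \cap K_n)$; the dense $G_\delta$'s produced for each compact $K_n$ can then be intersected. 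So I may assume $F \cap E$ is compact.

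For each pair $(k, m) \in \N^2$ I want an open neighborhood $U_{k,m} \subseteq \mathcal{U}$ of $f_k$ satisfying
\begin{equation*}
\lambda\bigl(\{r \in \R : \dim_H(f^{-1}(r) \cap E) \ge s_0 + 1/m\}\bigr) < 1/m \quad \text{for every } f \in U_{k,m}.
\end{equation*}
Granted this, setting $V_m := \bigcup_k U_{k,m}$ and $\mathcal{G} := \bigcap_m V_m$ yields a dense $G_\delta$: each $V_m$ is open and contains the dense set $\{f_k\}$, and for $f \in \mathcal{G}$ we get $\lambda(\{r : \dim_H(f^{-1}(r) \cap E) \ge s_0 + 1/m\}) < 1/m$ for every $m$; letting $m \to \infty$ in the nested bad-level sets produces $\lambda(\{r : \dim_H(f^{-1}(r) \cap E) > s_0\}) = 0$, i.e.\ $D_*^f(F \cap E) \le s_0$, which is the desired bound.

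The main obstacle is therefore the construction of the $U_{k,m}$, i.e., the following perturbation statement: for $g \in C_1^\alpha(F)$ with $D_*^g(F \cap E) \le s_0$ and for prescribed $s > s_0$, $\eta > 0$, there is $\delta > 0$ such that every $f \in C_1^\alpha(F)$ with $\|f - g\|_\infty < \delta$ satisfies $\lambda(\{r : \dim_H(f^{-1}(r) \cap E) \ge s\}) < \eta$. The input is that $A := \{r : \dim_H(g^{-1}(r) \cap E) < s\}$ is co-null, so each level in $A$ is covered by open sets with arbitrarily small $\mathcal{H}^s$-premeasure; the elementary inclusion $f^{-1}(r) \cap E \subseteq g^{-1}([r - \delta, r + \delta]) \cap E$ reduces the task to estimating the Hausdorff $s$-content of the preimage of short intervals under $g$ for most $r$. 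I would (i) use an Egorov / inner regularity step to pass to a compact $K \subseteq A$ of almost full measure on which the $\mathcal{H}^s_{\delta'}$-premeasures of the level-set covers are uniformly controlled; (ii) exploit the $1$-H\"older-$\alpha$ property of $g$ via a coarea-type estimate of the form $\int \mathcal{H}^s_{\delta'}(g^{-1}(r) \cap E)\,dr \le \mathcal{H}^{s + \alpha}_{\delta'}(F \cap E)$ to bound the aggregate contribution of nearby levels $r' \in [r - \delta, r + \delta]$; and (iii) choose $\delta$ so small that the residual bad set of $r$ has measure below $\eta$.

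The genuine technical difficulty is that $g^{-1}([r - \delta, r + \delta])$ is an uncountable union of level sets of $g$ and its $\mathcal{H}^s$-content is not automatically inherited from the co-null family of individual level-set covers; the H\"older regularity is precisely the quantitative tool that tames this thickening, and turning it into a clean measure estimate on the residual bad $r$-set that is uniform for $f$ in a neighborhood of $g$ is where the delicate core of the proof sits.
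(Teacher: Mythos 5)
The lemma you are asked to prove is quoted in this paper from \cite{sier}; the paper at hand does not reprove it, but it does prove the closely analogous Lemma~\ref{lemma:upper_box_generic}, and explicitly states that Lemma~\ref{lemma:lower_box_generic} (and hence, mutatis mutandis, Lemma~\ref{*lemdfb}) is proved ``verbatim'' by the same scheme. That scheme is: for each $f_k$, each $n$, and a.e.\ level $r$, pick once and for all a fixed finite cover of $f_k^{-1}(r)\cap E$ that certifies the dimension bound; use compactness to see that this same cover remains a cover of $g^{-1}(r')\cap E$ for all $g$ in a small ball $B(f_k,\rho_{k,n,r})$ and all $r'$ near $r$; aggregate these radii measurably over $r$ into sets $R_{k,n}$ losing only measure $2^{-n}$; then apply Borel--Cantelli. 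Your global framework (dense $G_\delta$ as $\bigcap_m V_m$ with $V_m=\bigcup_k U_{k,m}$, reduction to a local perturbation statement around each $f_k$) matches this scheme at the top level, but your proposed engine for the local perturbation statement is not the right one, and I believe it breaks down.

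The genuine gap is in your step (ii). You reduce, via the inclusion $f^{-1}(r)\cap E\subseteq g^{-1}([r-\delta,r+\delta])\cap E$, to bounding the $\mathcal{H}^s$-content of the preimage of a short interval, and propose a coarea-type inequality $\int \mathcal{H}^s_{\delta'}(g^{-1}(r)\cap E)\,dr\le \mathcal{H}^{s+\alpha}_{\delta'}(F\cap E)$ as the tool. Two problems. First, this inequality integrates the slice contents; it gives no control whatsoever on $\mathcal{H}^s_{\delta'}(g^{-1}([r-\delta,r+\delta])\cap E)$, which is the content of a \emph{thickened} set (a union of slices), and the $\mathcal{H}^s$-content of a union is not bounded by the integral of the slice contents. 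Second, even the integral bound is useful only if $\mathcal{H}^{s+\alpha}_{\delta'}(F\cap E)\to 0$ as $\delta'\to 0$, i.e.\ only when $s+\alpha>\dim_H(F\cap E)$; when it does help it reproduces the trivial a priori bound $D_*^g(F\cap E)\le \dim_H(F\cap E)-\alpha$, which makes no use of the dense family $\{f_k\}$ at all, so it cannot possibly yield the lemma in general. What you actually need is the compactness observation your setup is already poised to exploit: for $r$ with $\dim_H(g^{-1}(r)\cap E)<s$, take a \emph{finite open} cover $\{U_i\}$ of the compact set $g^{-1}(r)\cap E$ with $\sum(\mathrm{diam}\,U_i)^s<\varepsilon$ and $\mathrm{diam}\,U_i<\rho$; since $\bigcap_{\delta>0}\bigl(g^{-1}([r-\delta,r+\delta])\cap E\bigr)=g^{-1}(r)\cap E\subseteq\bigcup_i U_i$ and the sets in the intersection are compact and decreasing, there is a $\delta>0$ with $g^{-1}([r-\delta,r+\delta])\cap E\subseteq\bigcup_i U_i$, and hence $f^{-1}(r')\cap E\subseteq\bigcup_i U_i$ for all $f$ with $\|f-g\|<\delta/2$ and all $r'\in(r-\delta/2,r+\delta/2)$. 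No coarea is involved; the cover is simply inherited. From there you need the measurable aggregation of the radii $\delta=\delta(k,n,r)$ over $r$ (this is where your Egorov/Luzin idea correctly belongs) and a Borel--Cantelli passage, exactly as in the proof of Lemma~\ref{lemma:upper_box_generic} in Section~\ref{sec:box}. As written, your argument never closes the crucial step, and the tool you reach for does not close it.
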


We extend the above notation to the lower and upper box dimension of level sets. Notably, we put 
$D_{\underline{B}}^f(r,F) = \underline{\dim}_{B}(f^{-1}(r))$ and $D_{\overline{B}}^f(r,F) = \overline{\dim}_{B}(f^{-1}(r))$.
As it will turn out, to have natural analogues to the machinery above, in case of upper box dimension it makes more sense to swap $\inf$ and $\sup$ in the definition. Consequently, we define
\begin{displaymath}
    D_{\underline{B}*}^f(F)=\sup\{d: \lambda\{r : D_{\underline{B}}^f(r,F)\geq{d}\}>0\},
\end{displaymath}
\begin{displaymath}
    D_{\overline{B}*}^f(F)= \begin{cases}
        \inf\{d: \lambda\{r : r\in f(F) \text{ and } D_{\overline{B}}^f(r,F)\leq{d}\}>0\}, & \text{if } \lambda(f(F))>0, \\
        0, & \text{if } \lambda(f(F))=0,
    \end{cases}
\end{displaymath}
that is while $D_{\underline{B}*}^f(F)$ is the essential supremum of the lower box dimension of level sets, $D_{\overline{B}*}^f(F)$ is the essential infimum
of the upper box dimension of nonempty level sets. (The nonempty clause was omittable previously, but is vital in this case.)
Moreover, let
\begin{equation}\label{def:generic_lower}
    D_{\underline{B}*}(\aaa,F)=\sup_{\cag\in \mg_{1,\aaa}}\inf\{ D_{\underline{B}*}^{f}(F):f\in \cag \} \text{ and}
\end{equation}
\begin{equation}\label{def:generic_upper}
   D_{\overline{B}*}(\aaa,F)=\inf_{\cag\in \mg_{1,\aaa}}\sup\{ D_{\overline{B}*}^{f}(F):f\in \cag \}.
\end{equation}
In Section \ref{sec:box}, we are going to analyze analogous statements to Lemma \ref{*lemdfb} and Theorem \ref{*thmgenex} about 
$D_{\underline{B}*}$ and $D_{\overline{B}*}$.

We recall the following extension theorem, which is a consequence of Theorem 1 of \cite{[GrunbHolderext]}:

\begin{theorem}\label{thm:Grunb}
Suppose that $F\sse \R^{p}$ and $f:F\to \R$ is a $c$-H\"older-$\alpha$ function.
Then there exists a $c$-H\"older-$\alpha$ function $g:\R^{p}\to \R$
 such that
 $g(x)=f(x)$ for $x\in F$.
\end{theorem}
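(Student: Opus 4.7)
The plan is to give an explicit McShane-type extension formula: define
$$g(x) \;=\; \inf_{y\in F}\bigl\{ f(y) + c\,\|x-y\|^{\alpha}\bigr\} \qquad (x\in\R^p),$$
and verify (i) $g(x)>-\infty$ for every $x\in\R^p$, (ii) $g|_F = f$, and (iii) $g$ is $c$-Hölder-$\alpha$ on $\R^p$. The single algebraic fact doing all the work is the subadditivity of $t\mapsto t^{\alpha}$ on $[0,\infty)$ for $\alpha\in(0,1]$, which, combined with the ordinary triangle inequality in $\R^p$, yields the ``Hölder triangle inequality'' $\|a-b\|^{\alpha}\le \|a-c\|^{\alpha}+\|c-b\|^{\alpha}$ for all $a,b,c\in\R^p$.

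For (i) I would fix any $y_0\in F$ and combine the Hölder bound $|f(y)|\le |f(y_0)|+c\|y-y_0\|^{\alpha}$ with $\|y-y_0\|^{\alpha}\le \|y-x\|^{\alpha}+\|x-y_0\|^{\alpha}$ to obtain the uniform lower bound $f(y)+c\|x-y\|^{\alpha}\ge -|f(y_0)|-c\|x-y_0\|^{\alpha}$, finite independently of $y$. For (ii), if $x\in F$, then the choice $y=x$ gives $g(x)\le f(x)$, while the Hölder property of $f$ on $F$ says $f(x)\le f(y)+c\|x-y\|^{\alpha}$ for every $y\in F$, so taking the infimum yields $f(x)\le g(x)$. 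For (iii), given $x_1,x_2\in\R^p$ and any $y\in F$, the Hölder triangle inequality applied to $x_1,y,x_2$ gives $f(y)+c\|x_1-y\|^{\alpha}\le f(y)+c\|x_2-y\|^{\alpha}+c\|x_1-x_2\|^{\alpha}$; taking the infimum over $y$ produces $g(x_1)\le g(x_2)+c\|x_1-x_2\|^{\alpha}$, and interchanging $x_1$ and $x_2$ closes the argument.

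There is no serious obstacle in this approach; the only mild subtlety is to guarantee finiteness of $g$ when $F$ is unbounded, which is handled by step (i). A crucial feature of the formula is that the Hölder constant is preserved \emph{exactly}, with no loss from $c$ to some larger $c'$; this is what the quoted Grünbaum result delivers (in fact with the sharp dimension-free constant), making it suitable for the present purposes where the whole theory is built on the class $C^{\alpha}_{1}$ with unit constant.
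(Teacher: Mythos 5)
Your proof is correct, and it takes a genuinely different route from what the paper does. The paper does not prove Theorem~\ref{thm:Grunb} at all: it simply invokes it as a consequence of Theorem~1 of the cited Gr\"unbaum reference. You instead give an explicit, self-contained proof via the McShane extension formula $g(x)=\inf_{y\in F}\bigl\{f(y)+c\,\|x-y\|^{\alpha}\bigr\}$. Each of your three steps is sound: finiteness of the infimum follows from the lower bound $f(y)+c\|x-y\|^{\alpha}\ge f(y_0)-c\|x-y_0\|^{\alpha}$ obtained from the H\"older bound on $f$ and the subadditivity of $t\mapsto t^{\alpha}$; $g|_F=f$ follows by taking $y=x$ for one inequality and the H\"older property of $f$ for the other; and the H\"older estimate for $g$ follows from the inequality $\|a-b\|^{\alpha}\le\|a-c\|^{\alpha}+\|c-b\|^{\alpha}$, which makes $\rho(a,b):=\|a-b\|^{\alpha}$ a metric, reducing the problem to the classical McShane lemma for Lipschitz functions with respect to $\rho$. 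The constant $c$ is preserved exactly, which is precisely what the paper needs for working in $C_1^{\alpha}$. What the cited Gr\"unbaum theorem buys that McShane does not is generality beyond real-valued targets (e.g., Hilbert- or Banach-space-valued maps), where the $\inf$ formula has no analogue; but for the scalar case at hand your direct argument is the cleaner and more elementary choice, and it would be perfectly acceptable to include it in place of the citation.
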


We will use a well-applicable approximation result (Lemma 4.4 of \cite{sier}). If $F$ is compact, we say that a function $f:F\to\mathbb{R}$ is piecewise affine
if there is a finite set $\mathcal{S}$ of non-overlapping, non-degenerate simplices such that for any $S\in\mathcal{S}$, $f$ coincides with an
affine function on $S\cap F$.


\begin{lemma} \label{lemma:piecewise_affine_approx}
Assume that $F$ is compact and $c > 0$ is fixed. 
Then locally non-constant, piecewise affine $c$-Hölder-$\alpha$ functions defined on $F$ form a dense subset $C_{c^{-}}^{\alpha}(F)$.
\end{lemma}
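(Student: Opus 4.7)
The plan is to approximate $f$ by smoothing followed by piecewise affine interpolation on a fine, regular simplicial mesh, and then to break any accidental constancy on individual simplices. Fix $\varepsilon>0$; since $f\in C_{c^-}^\aaa(F)$, choose $c'\in[0,c)$ with $f$ being $c'$-H\"older-$\aaa$ and pick auxiliary constants $c'<c''<c$. I first apply Theorem \ref{thm:Grunb} to extend $f$ to a $c'$-H\"older-$\aaa$ function $g:\R^p\to\R$, and then convolve with a standard mollifier $\varphi_\tau$ of support radius $\tau$ to produce $\tilde g=g*\varphi_\tau$. Since convolution with a probability density preserves the H\"older seminorm, $\tilde g$ remains $c'$-H\"older-$\aaa$, while $\|\tilde g-g\|_\infty \le c'\tau^\aaa$ on a bounded neighborhood $U\supset F$; on $\overline U$ the smooth $\tilde g$ has some finite Lipschitz constant $L=L(\tau)$.

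Next, I fix a triangulation $\mathcal{S}$ of $\overline U$ into simplices of diameter at most $\delta$ and of bounded aspect ratio (for instance a scaled Kuhn triangulation), and let $h$ be the piecewise affine function that agrees with $\tilde g$ at every vertex of $\mathcal{S}$. Standard interpolation estimates give $\|h-\tilde g\|_\infty \le C_0 L\delta$ on $\overline U$, so $\|h-f\|_{\infty,F}\le C_0 L\delta+c'\tau^\aaa$, which will be pushed below $\varepsilon/2$. For the H\"older bound, the gradient of $h$ on each simplex is controlled by $L$ times a geometric constant coming from the aspect ratio, so $h$ itself is $C_1 L$-Lipschitz on $\overline U$. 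I then split any pair $x,y\in F$ according to whether $\mathpzc{d}(x,y)\le\delta$: in the first case the Lipschitz bound yields $|h(x)-h(y)|\le C_1 L\,\mathpzc{d}(x,y)\le C_1 L\delta^{1-\aaa}\mathpzc{d}(x,y)^\aaa$, and in the second case the triangle inequality through $\tilde g$ combined with the $c'$-H\"older estimate for $\tilde g$ gives $|h(x)-h(y)|\le 2C_0L\delta+c'\mathpzc{d}(x,y)^\aaa\le (2C_0L\delta^{1-\aaa}+c')\mathpzc{d}(x,y)^\aaa$. Choosing $\delta$ so small that $(C_1+2C_0)L\delta^{1-\aaa}<c''-c'$ forces both bounds below $c''\mathpzc{d}(x,y)^\aaa$, so $h$ is $c''$-H\"older-$\aaa$.

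The last step is to enforce the locally non-constant property by perturbing each vertex value $h(v)$ independently by a small $\xi_v$. On any fixed simplex the resulting affine function is constant precisely when its vertex perturbations satisfy a single linear equation, a codimension-$p$ condition in $\R^{p+1}$; since $\mathcal{S}$ has only finitely many simplices meeting $\overline U$, a generic choice of $(\xi_v)$ with sup-norm at most $\eta$ avoids all of these conditions simultaneously. Taking $\eta$ small enough (depending on $\delta$) that the resulting perturbation adds at most $c-c''$ to the H\"older-$\aaa$ constant of $h$ and at most $\varepsilon/2$ to the sup-norm distance produces the required locally non-constant, piecewise affine $c$-H\"older-$\aaa$ function within $\varepsilon$ of $f$. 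The main obstacle is the H\"older estimate in the previous paragraph: a direct affine interpolation of the merely $c'$-H\"older-$\aaa$ function $g$ would introduce a multiplicative geometric constant that need not be bounded by $c/c'$, and the preliminary mollification is essential precisely so that this constant is replaced by the quantity $L\delta^{1-\aaa}$, which can be driven below $c-c'$ by refining the mesh.
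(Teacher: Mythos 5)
The paper does not supply its own proof of this lemma; it is quoted as Lemma~4.4 of \cite{sier}, so there is no internal argument to compare against line by line. Your extend--mollify--interpolate--perturb scheme is the natural route and the steps are correct for $0<\alpha<1$: extension via Theorem~\ref{thm:Grunb} is legitimate, convolution preserves the H\"older seminorm while producing a Lipschitz (indeed smooth) function, the split of the H\"older estimate into the scales $\mathpzc{d}(x,y)\le\delta$ and $\mathpzc{d}(x,y)>\delta$ is exactly right, and the generic vertex perturbation destroys constancy on all finitely many simplices. Your closing observation is also on target: interpolating a merely H\"older function directly produces a gradient of order $\delta^{\alpha-1}$ on a mesh of size $\delta$, which cannot be absorbed into the H\"older constant at scales much smaller than $\delta$, so a preliminary Lipschitz-ification is essential. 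The only caveat is the endpoint $\alpha=1$: there the factor $(C_1+2C_0)L\delta^{1-\alpha}$ does not vanish as $\delta\to 0$, so the argument as written does not cover it. To include $\alpha=1$ one should instead exploit that $\tilde g$ is $C^1$ with $\|\nabla\tilde g\|\le c'$, so $\nabla h\to\nabla\tilde g$ uniformly as the mesh refines and the interpolant becomes $c''$-Lipschitz directly. Since the paper only invokes this lemma for $0<\alpha<1$, this is a side remark rather than a substantive gap.
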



We will also use a slicing theorem concerning Hausdorff dimension, which is the natural generalization of Marstrand's classical slicing theorem. The statement requires some technicalities.

We say that $A$ is a $t$-set if $\dim_H A = t$ and
$0<\mathcal{H}^t(A)<\infty$. By $G(p, k)$, we denote the Grassmannian of $k$-dimensional linear subspaces of $\mathbb{R}^p$, and by $\gamma_{p,k}$ its canonical measure, 
being the push-forward of the Haar measure on the group of $p$-dimensional invertible linear transformations. This concept is not straightforward to grasp, but we will be concerned exclusively with $k=p-1$, 
in which case it is easily accessible: by taking orthocomplementers, one can see that $G(p, p-1)$ is isomorphic to $G(p, 1)$, which is equivalent to the projective space $\mathbb{P}\mathbb{R}^{p-1}$. 
However, it is a factor of the sphere $\mathbb{S}^{p-1}$, and the measure $\gamma_{p, p-1}$ is simply equivalent to the push-forward of the $p-1$ dimensional spherical measure.

If $W\in G(p, k)$, it can be thought of as a subset of $\mathbb{R}^p$, hence the notation $W+a$ makes sense for $a\in \mathbb{R}^p$. The slicing theorem is the following (Theorem 10.10 of \cite{mattila_1995})

\begin{theorem} \label{thm:slicing_thm}
    Let $m \leq t \leq p$, and let $A\subseteq \mathbb{R}^p$ be a Borel $t$-set. Then for $\gamma_{p, p-m}$ almost all $W\in G(p, p-m)$
    $$\mathcal{H}^m(\{a\in W^{\perp}: \dim_H (A\cap (W+a))=t-m\})>0.$$
\end{theorem}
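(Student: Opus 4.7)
The plan is to follow the standard Frostman-plus-Grassmannian-Fubini route to Marstrand-type slicing. Since $A$ is a Borel $t$-set, $\mu := \mathcal{H}^t|_A$ is a nontrivial finite Radon measure satisfying the Frostman bound $\mu(B(x,r)) \le C r^t$ for all $x\in\R^p$ and $r>0$ (alternatively, one could invoke Frostman's lemma directly). I would split the statement into two halves: the upper bound $\dim_H(A\cap(W+a))\le t-m$ valid for $\mathcal{H}^m$-a.e.\ $a\in W^\perp$ and every $W$, and the matching lower bound holding on a set of $a$ of positive $\mathcal{H}^m$-measure, for $\gamma_{p,p-m}$-a.e.\ $W$.

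For the upper bound I would apply Fubini to efficient covers. Fix $s > t-m$ and a cover of $A$ by balls $B_i$ of radii $r_i\le\delta$. Each $B_i$ projects to a set of $\mathcal{H}^m$-measure $\asymp r_i^m$ in $W^\perp$, and contributes a ball of radius $\le r_i$ to a cover of the slice $A\cap(W+a)$ exactly on that projected set. Hence
\begin{equation*}
\int_{W^\perp}\mathcal{H}^s_\delta(A\cap(W+a))\,d\mathcal{H}^m(a) \lesssim \sum_i r_i^{s+m} \le \delta^{s+m-t}\sum_i r_i^t,
\end{equation*}
which tends to $0$ as $\delta\to 0$ because $s+m>t$. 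Thus $\mathcal{H}^s(A\cap(W+a))=0$ for $\mathcal{H}^m$-a.e.\ $a$, and letting $s\downarrow t-m$ along a sequence yields the desired upper bound.

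For the lower bound I would disintegrate $\mu$ along the projection $\pi_W:\R^p\to W^\perp$, writing $\mu=\int \mu_{W,a}\,d(\pi_W\mu)(a)$ with $\mu_{W,a}$ supported on $A\cap(W+a)$. For fixed $0<s<t-m$ consider the averaged $s$-energy
\begin{equation*}
J(s) := \int_{G(p,p-m)}\int_{W^\perp}\iint\frac{d\mu_{W,a}(x)\,d\mu_{W,a}(y)}{|x-y|^s}\,d(\pi_W\mu)(a)\,d\gamma_{p,p-m}(W).
\end{equation*}
Swapping order, the Grassmannian integral of the conditional kernel $|x-y|^{-s}$ restricted to $\{\pi_W(x)=\pi_W(y)\}$ produces a factor $\lesssim|x-y|^{-m}$ (a standard spherical/Grassmannian computation exploiting the invariance of $\gamma_{p,p-m}$ under orthogonal transformations). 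Thus $J(s)\lesssim \iint |x-y|^{-(s+m)}\,d\mu(x)\,d\mu(y)$, which is finite because $s+m<t$ and $\mu$ is Frostman. It follows that for $\gamma_{p,p-m}$-a.e.\ $W$ the slice $\mu_{W,a}$ has finite $s$-energy for $\pi_W\mu$-a.e.\ $a$, hence $\dim_H(A\cap(W+a))\ge s$ on such a set. Marstrand's projection theorem applied to $\mu$ (of dimension $t\ge m$) gives $\pi_W\mu\ll\mathcal{H}^m$ on $W^\perp$ for $\gamma_{p,p-m}$-a.e.\ $W$, so the set of good $a$'s has positive $\mathcal{H}^m$-measure. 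Letting $s$ run through a sequence tending to $t-m$, intersecting the resulting full-measure subsets of $G(p,p-m)$, and combining with the upper bound proves $\dim_H(A\cap(W+a))=t-m$ on a set of $a$'s of positive $\mathcal{H}^m$-measure.

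The main technical obstacle is the Grassmannian kernel computation: one must show rigorously that averaging $|x-y|^{-s}$ over $W\in G(p,p-m)$ against the ``condition'' $\pi_W(x)=\pi_W(y)$ produces a factor $\lesssim |x-y|^{-(s+m)}$. This is typically handled by fixing the direction of $x-y$, using the invariance of $\gamma_{p,p-m}$ under orthogonal transformations to reduce to an explicit spherical integral, and interpreting the degenerate conditioning through an approximate identity (or equivalently by working with projections onto the complementary $m$-plane and inserting a smooth indicator). All the remaining ingredients --- Frostman's lemma, disintegration of measures, and Marstrand's projection theorem --- are classical.
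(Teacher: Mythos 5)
The paper does not prove this theorem at all: it is quoted verbatim as Theorem 10.10 of Mattila's book, \cite{mattila_1995}, and used as a black box. So there is no ``paper's route'' to compare against; what you have sketched is essentially Mattila's own argument, which is the content being cited.

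Your outline is the standard one and is sound in spirit: a Fubini/cover estimate gives the $\mathcal{H}^m$-a.e.\ upper bound $\dim_H(A\cap(W+a))\le t-m$ for every $W$, and an averaged energy estimate over $G(p,p-m)$ combined with Marstrand's projection theorem gives the matching lower bound on a positive-measure set of $a$'s for $\gamma_{p,p-m}$-a.e.\ $W$. Two points deserve care. First, the endpoint $t=m$ is not covered by your energy argument, which needs some $0<s<t-m$; and for $t=m$ Marstrand only gives $\mathcal{H}^m(\pi_{W^\perp}(A))>0$ rather than $\pi_{W^\perp}\mu\ll\mathcal{H}^m$. That case is, however, immediate: the upper bound already forces $\dim_H(A\cap(W+a))=0$ for $\mathcal{H}^m$-a.e.\ $a$, and the projection has positive $\mathcal{H}^m$-measure, on which the slices are nonempty. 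Second, the step you call the ``main technical obstacle'' is genuinely the crux: an abstract Rokhlin disintegration $\mu=\int\mu_{W,a}\,d(\pi_{W^\perp}\mu)(a)$ is only defined up to a $W$-dependent null set, which makes interchanging it with the $\gamma_{p,p-m}$-integral illegitimate as written. Mattila avoids this by constructing the slice measures concretely as weak limits of normalized restrictions of $\mu$ to $\delta$-tubes around $W+a$, proving the energy inequality $\int_G\int_{W^\perp} I_s(\mu_{W,a})\,d\mathcal{H}^m(a)\,d\gamma_{p,p-m}(W)\lesssim I_{s+m}(\mu)$ directly for these (note the integration against $\mathcal{H}^m$, not $\pi_{W^\perp}\mu$). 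If you supply that construction in place of the heuristic ``conditioning'' step, the proof is complete.
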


We note that self-similar sets, which are attractors of iterated function systems satisfying the Open Set Condition, (OSC) are $t$-sets due to the classical theorem of Hutchinson
\cite{[Hut]}. We will call such sets simply
OSC self-similar sets.   For the iterated function system defined by $(\Psi_i)_{i=1}^{m}$
the bounded non-empty
   open set $V$ is {\it witnessing OSC} if $\bigcup_{i=1}^{m}\Psi_i(V)\subseteq V$, and
    the union is disjoint. 

Later on we will need a slightly stronger form of Theorem \ref{thm:slicing_thm},  to which end we introduce $Pr_{W^{\perp}}(A)$, the orthogonal projection of $A$ to $W^{\perp}$. Notably, 
we will need that in $Pr_{W^{\perp}}(A)$  almost every intersection has dimension $\dim_H A -1$, not merely of positive measure.  
This property requires some sort of homogeneity of $A$. For a similarity $\Psi$ acting on $\mathbb{R}^p$, there is a unique orthogonal linear transformation $R$, similarity ratio $\rho\in (0, +\infty)$, and
translation vector $b\in \mathbb{R}^p$ such that $\Psi(x) = \rho R(x) + b$. We refer to $R$ as the {\it orthogonal part} of $\Psi$, and a self-similar set has {\it finitely many directions}
if it is determined by similarities such that their orthogonal parts generate a finite subgroup of the orthogonal group $O(p)$.
We recall \cite[Corollary~1]{wenxi} ($\dim$ is either the Hausdorff, box, or packing dimension):

\begin{proposition} \label{prop:wenxi}
    Assume that $A$ is a self-similar set with finitely many directions.
    If $\dim_H A > m$, then for $\gamma_{p, p-m}$ almost all $W\in G(p, p-m)$ 
    $$\mathcal{H}^m(Pr_{W^{\perp}}(A)\setminus \{a\in W^{\perp}: \dim (A\cap (W+a))= \dim A  -m\}) = 0,$$
    that is standard dimension drop occurs on full measure.
\end{proposition}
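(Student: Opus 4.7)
The plan is to bootstrap Marstrand's slicing theorem (Theorem~\ref{thm:slicing_thm}) to a full-measure statement by iterating via the self-similar structure, using the finite-directions hypothesis to control the iteration. Write $s = \dim_H A > m$. From Theorem~\ref{thm:slicing_thm}, for $\gamma_{p,p-m}$-almost every $W \in G(p, p-m)$ the ``good'' set
\[
G_W := \{a \in W^\perp : \dim_H(A \cap (W+a)) = s-m\}
\]
has positive $\mathcal{H}^m$-measure. Combined with the classical Marstrand upper bound $\dim_H(A \cap (W+a)) \le s-m$ holding for $\mathcal{H}^m$-a.e.\ $a \in W^\perp$, it remains only to show that the ``bad'' set $B_W := Pr_{W^\perp}(A) \setminus G_W$ has $\mathcal{H}^m$-measure zero.

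Next I would set up a self-similar transport of good slices onto bad ones. For a finite word $\sigma$ write $\Psi_\sigma = \rho_\sigma R_\sigma(\cdot) + b_\sigma$. A direct computation gives $\Psi_\sigma^{-1}(W+a) = W_\sigma + a_\sigma$, where $W_\sigma := R_\sigma^{-1}W \in G(p, p-m)$ and $a_\sigma \in W_\sigma^\perp$ is the corresponding translate. Since $\Psi_\sigma$ is a similarity and $\Psi_\sigma(A) \subseteq A$,
\[
\dim_H(A \cap (W+a)) \ge \dim_H(\Psi_\sigma(A) \cap (W+a)) = \dim_H(A \cap (W_\sigma + a_\sigma)).
\]
The finite-directions hypothesis ensures that $\{W_\sigma : \sigma \text{ a word}\}$ is contained in a finite orbit $\{W^{(1)}, \ldots, W^{(N)}\}$ of $W$ under a finite subgroup $H \le O(p)$. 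Because $\gamma_{p,p-m}$ is $O(p)$-invariant, for almost every $W$ all of $W^{(1)}, \ldots, W^{(N)}$ are simultaneously Marstrand-generic; fix such a $W$ and let $G^{(j)} \subseteq (W^{(j)})^\perp$ denote the corresponding good sets, so $\mathcal{H}^m(G^{(j)}) > 0$ for every $j$.

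The final step is a Vitali/density argument. As $|\sigma| \to \infty$, the pieces $Pr_{W^\perp}(\Psi_\sigma(A))$ form arbitrarily fine covers of $Pr_{W^\perp}(A)$ by affine similar copies of $Pr_{(W_\sigma)^\perp}(A)$, and by the slice correspondence above, for each such $\sigma$ a prescribed positive proportion of the copy (inherited from $G^{(j)}$ with $W_\sigma = W^{(j)}$) consists of good translates $a$ for $W$. Hence every piece in the cover contains a uniformly positive $\mathcal{H}^m$-proportion of points of $G_W$, and the Lebesgue density theorem then forces $\mathcal{H}^m(B_W) = 0$.

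The main obstacle is making the last step genuinely quantitative: one must uniformly lower-bound, in $\sigma$, the $\mathcal{H}^m$-density of good points inside each similar projected copy, and control measure-theoretic overlaps, since the OSC is imposed in $\mathbb{R}^p$ rather than on $W^\perp$ and so projected pieces may overlap. This is precisely where the finite-directions assumption is essential: it restricts the geometry of the projected pieces to a finite family of shapes obtained from $Pr_{(W^{(j)})^\perp}(A)$ by similarities, so uniform density estimates and a controlled Besicovitch-type covering become available.
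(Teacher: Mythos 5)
The paper does not prove Proposition~\ref{prop:wenxi}; it is quoted verbatim as \cite[Corollary~1]{wenxi}, so there is no in-paper argument against which to compare. Evaluating your sketch on its own merits: the structure is sensible, and in fact the obstacle you flag at the end is less serious than you suggest, while a different gap goes unremarked.

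On the step you worry about: you do not need a Vitali/Besicovitch covering, so the possible overlap of the projected cylinders is harmless. Use a pointwise Lebesgue density argument on $W^\perp\cong\R^m$. Fix $W$ generic for the whole finite $H$-orbit, let $c_1=\min_j\mathcal H^m(G^{(j)})>0$ and note $\mathcal H^m\big(Pr_{(W^{(j)})^\perp}(A)\big)<\infty$ since $A$ is compact. Given $a_0\in Pr_{W^\perp}(A)$, pick $x_0\in A$ above $a_0$; for each scale $r>0$ there is a word $\sigma$ through $x_0$ with $\rho_\sigma\diam A\le r<\rho_{\sigma'}\diam A$ for the parent word $\sigma'$, so $\rho_\sigma\ge(\min_i\rho_i)\,r/\diam A$. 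The similarity $a\mapsto a_\sigma$ carries the ball $B(a_0,r)\supseteq Pr_{W^\perp}(\Psi_\sigma(A))$ onto a copy of $Pr_{(W_\sigma)^\perp}(A)$, and pulls $G^{(j)}$ ($W_\sigma=W^{(j)}$) back, up to an $\mathcal H^m$-null set, into the set where $\dim_H(A\cap(W+a))\ge s-m$, with $\mathcal H^m$-measure $\rho_\sigma^m\,\mathcal H^m(G^{(j)})\ge c_2 r^m$. Hence that set has uniformly positive lower $\mathcal H^m$-density at every $a_0\in Pr_{W^\perp}(A)$, and combined with the classical a.e.\ upper bound $\dim_H(A\cap(W+a))\le s-m$, the Lebesgue density theorem forces $\mathcal H^m(B_W)=0$. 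No overlap control is needed.

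Two genuine gaps remain. First, the Proposition asserts the dimension drop for Hausdorff, box \emph{and} packing dimension of the slice (this is what the parenthetical ``$\dim$ is either the Hausdorff, box, or packing dimension'' means), while your argument only treats Hausdorff dimension: Marstrand's slicing theorem gives a Hausdorff lower bound, and the a.e.\ upper bound you invoke is a Hausdorff-measure Fubini statement; the upper bound for box and packing dimension of a.e.\ slice is a separate (and more delicate) statement and is not addressed. Second, to apply Theorem~\ref{thm:slicing_thm} you need $A$ to be a $t$-set, i.e.\ $0<\mathcal H^s(A)<\infty$; for a self-similar set this is a consequence of OSC, which the Proposition does not hypothesize. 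It is harmless for the paper's use of the Proposition (Theorem~\ref{thm:gen_lower_bound_box} works with OSC sets), but your proof should state the assumption explicitly or explain why it can be dispensed with.
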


\hspace*{-0cm}\begin{figure}[h] 
    \includegraphics[scale=1]{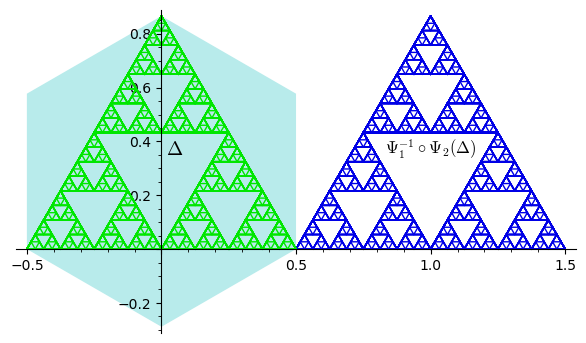}
 \caption{The Sierpiński triangle, $\Delta$ is in its light turquoise shaded central open set. To its right there is one of its neighbor sets, it is given by $\Psi_{1}^{-1}\circ \Psi_{2}(\Delta)$.
\label{fig:siercentral} }
\end{figure}

In Section \ref{sec:box}, it will be important to cleverly choose open sets witnessing that a given self-similar set is OSC. To this end, we will rely on \cite{10.2307/4097989}. 
Assume that an OSC self-similar
set $F$ is determined by the similarities $\Psi_1, ..., \Psi_m$. For ${\bf{i}} = (i_1, ..., i_k)$ and ${\bf{j}} = (j_1, ..., j_l)$, and $\Phi_{\bf{i}} = \Psi_{i_1}\circ ... \circ \Psi_{i_k}$, 
$\Phi_{\bf{j}} = \Psi_{j_1}\circ ... \circ \Psi_{j_k}$, we say that a mapping $\Phi_{\bf{i}}^{-1} \circ \Phi_{\bf{j}}$ is a \textit{neighbor map} if $i_1\neq j_1$, while the image of $F$ under
a neighbor map is a \textit{neighbor set} of $F$. 

Denote the set of neighbor maps by $\mathcal{N}$, and let 
\begin{equation}\label{*neighb}
H=\bigcup_{h\in \mathcal{N}}h(F).
\end{equation}
 Finally, let $V=\{x: d(x, F) < d(x, H)\}$ be the so-called \textit{central open set} of $F$, 
where $d$ denotes point-set distance.  For illustration, we refer to Figure \ref{fig:siercentral}, depicting the Sierpiński triangle determined by the similarities
$$\Psi_{1}\vekk{x}{y}=\matk{0.5}0 0 {0.5}\vekk{x}{y}+\vekk{-0.25}0,$$
$$\Psi_{2}\vekk{x}{y}=\matk{0.5}0 0 {0.5}\vekk{x}{y}+\vekk{0.25}0,$$
$$\Psi_{3}\vekk{x}{y}=\matk{0.5}0 0 {0.5}\vekk{x}{y}+\vekk{0}{\sqrt 3/4},$$
its central open set, and one of its neighbor sets. 

\cite[Theorem~1]{10.2307/4097989} essentially states the following:

\begin{theorem} \label{thm:central_open_set}
    The central open set witnesses OSC.
\end{theorem}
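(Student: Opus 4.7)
The plan is to verify the three defining properties of an OSC witness for $V$: that it is a nonempty, bounded, open set; that $\Psi_i(V) \subseteq V$ for each $i$; and that $\Psi_1(V), \dots, \Psi_m(V)$ are pairwise disjoint. Openness is immediate from the continuity of $x \mapsto d(x,F) - d(x,H)$, as $V$ is the preimage of $(-\infty, 0)$. Boundedness is clear once $H \neq \emptyset$, since the inequality $d(x,F) < d(x,H)$ forces $x$ to lie in a bounded neighborhood of $F$. For non-emptiness, I would exhibit a point of $F$ lying at positive distance from $H$: informally, the neighbor sets capture the ``seams'' where different copies of $F$ meet, so a point inside a small iterated piece $\Phi_{{\bf i}}(F)$ that is separated from all sibling pieces whose first letter differs from that of ${\bf i}$ does the job.

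The key algebraic observation driving the remaining two properties is that the collection $\mathcal{N}$ of neighbor maps is closed under left-composition with each $\Psi_i^{-1}$. Indeed, if $h = \Phi_{{\bf i}}^{-1} \circ \Phi_{{\bf j}}$ with $i_1 \neq j_1$, then
\[
\Psi_i^{-1} \circ h = (\Phi_{{\bf i}} \circ \Psi_i)^{-1} \circ \Phi_{{\bf j}} = \Phi_{({\bf i},\,i)}^{-1} \circ \Phi_{{\bf j}},
\]
and the extended word $({\bf i}, i)$ still has first letter $i_1 \neq j_1$, so this is again a neighbor map. Consequently $\Psi_i^{-1}(H) \subseteq H$. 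Now fix $x \in V$ and write $\rho_i$ for the similarity ratio of $\Psi_i$. Since $\Psi_i(F) \subseteq F$ we have $d(\Psi_i(x), F) \leq \rho_i\, d(x, F)$, while $\Psi_i^{-1}(H) \subseteq H$ yields $d(\Psi_i(x), H) \geq \rho_i\, d(x, H)$. Combining these two bounds, $d(\Psi_i(x), F) < d(\Psi_i(x), H)$, so $\Psi_i(x) \in V$, which is the invariance property.

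For disjointness, suppose toward contradiction that $\Psi_i(x) = \Psi_j(y) = z$ with $i \neq j$ and $x, y \in V$. Since $i \neq j$, the map $\Psi_i^{-1} \circ \Psi_j$ is itself a neighbor map (take ${\bf i}=(i)$, ${\bf j}=(j)$), so $\Psi_i^{-1}(\Psi_j(F)) \subseteq H$ and therefore
\[
d(x, H) \leq d\bigl(x,\, \Psi_i^{-1}(\Psi_j(F))\bigr) = \rho_i^{-1} d(z, \Psi_j(F)) = \rho_i^{-1}\rho_j\, d(y, F).
\]
Combined with $d(x, F) < d(x, H)$, this gives $\rho_i\, d(x, F) < \rho_j\, d(y, F)$, and the symmetric argument (swapping the roles of $i$ and $j$) produces the opposite strict inequality, a contradiction. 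The degenerate case $d(x, F) = 0$ is handled directly: then $x \in F$, so $y = \Psi_j^{-1}(z) \in \Psi_j^{-1}(\Psi_i(F)) \subseteq H$, contradicting $y \in V$. The main obstacle I anticipate is the non-emptiness argument, which requires a careful geometric check that the (possibly infinite) union of neighbor sets does not exhaust $F$; once this geometric input is in place, the two scaling computations above constitute the routine core of the proof.
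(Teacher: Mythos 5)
The paper does not prove this theorem itself; it cites \cite[Theorem~1]{10.2307/4097989} (Bandt--Hung--Rao), so there is no in-paper proof to compare against. Evaluated on its own merits, your argument correctly handles the algebraic core: openness is immediate, and the two scaling computations establishing $\Psi_i(V)\subseteq V$ (via $\Psi_i^{-1}(H)\subseteq H$, which follows cleanly from the observation that left-composition with $\Psi_i^{-1}$ preserves $\mathcal N$) and the disjointness of $\Psi_1(V),\dots,\Psi_m(V)$ (via the two opposite strict inequalities $\rho_i\,d(x,F)<\rho_j\,d(y,F)<\rho_i\,d(x,F)$) are both valid. Incidentally, that pair of strict inequalities is already self-contradictory even when both sides vanish, so your separate treatment of the degenerate case is unnecessary, though harmless.

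However, there are two genuine gaps, and they are exactly where the mathematical weight of the theorem sits. First, boundedness is \emph{not} ``clear once $H\neq\emptyset$.'' If $H$ were merely a bounded nonempty set sitting next to $F$, the locus $\{x: d(x,F)<d(x,H)\}$ would typically be an unbounded region (think of $F$ and $H$ both reduced to points: you get a half-space). Boundedness of $V$ really uses that $H$ is an infinite union of ever-larger neighbor sets (note $\Psi_i^{-1}(H)\subseteq H$ forces $H$ to be unbounded and to recur at all scales) and needs a quantitative comparison of $d(x,F)$ and $d(x,H)$ for $x$ far from $F$; this is not supplied. Second, and more seriously, the non-emptiness of $V$ is the entire content of the Bandt--Hung--Rao theorem and you only gesture at it. Showing that $F\not\subseteq\overline H$ is precisely where the OSC hypothesis must be invoked; indeed, non-emptiness of the central open set is \emph{equivalent} to OSC in that paper, and the proof goes through Schief's strong OSC and a uniform lower bound keeping neighbor maps away from the identity. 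Your sketch (``a point separated from sibling pieces'') at best keeps a point away from finitely many neighbor sets at a single scale, not from the full, scale-spanning union $H$. As you yourself acknowledge, without this input the proof is incomplete at the decisive step.
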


We recall some notation and results of \cite{sierc} concerning the Sierpiński triangle  $\Delta$. By definition, 
$\Delta=\bigcap_{n=0}^{\infty}\Delta_n$ where $\Delta_n$ 
is the union of the triangles appearing at the $n$th step of the construction. 
The set of triangles on the $n$th level is $\tau_n$, and $\tau:=\bigcup_{n\in\N\cup\{0\}} \tau_n$.
Usually the interior of the initial triangle $\Delta_{0}$ is used 
as the open set $V$ witnessing OSC for the Sierpiński triangle, but later the usage of the central open set will be more advantageous.
If $T\in\tau_n$ for some $n$ then we denote by $V(T)$ the set of its vertices.

We say that $f:\DDD\to \R$ is a {\it piecewise affine function at level $n\in \N$
on the Sierpi\'nski triangle} if it is affine on  any $T\in\tttt_{n}$.

If a piecewise affine function at level $n\in \N$ on the Sierpi\'nski triangle  satisfies the property that for any $T\in\tttt_{n}$ one can always find two vertices of $T$
where $f$ takes the same value, then we say that $f$ is a {\it standard piecewise affine function at level $n\in \N$
on the Sierpi\'nski triangle}.

A function $f:\DDD\to \R$ is a {\it strongly piecewise affine function
on the Sierpi\'nski triangle} if there is an $n\in \N$ such that it is a piecewise affine function at level $n$.

We will use \cite[Lemma~4.2]{sierc} in Section \ref{sec:sier_triangle_box}:

\begin{lemma}\label{lemma:standard_piecewise_affine}
Assume that $0<\alpha<1$, and $0<c$ are fixed. 
Then the locally non-constant standard strongly piecewise affine $c^{-}$-H\"older-$\alpha$ functions defined on $\DDD$ form a dense subset of the $c$-H\"older-$\alpha$ functions.
\end{lemma}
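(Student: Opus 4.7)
The plan is to approximate $f$ first by a general piecewise affine $c^-$-H\"older-$\alpha$ function via Lemma \ref{lemma:piecewise_affine_approx}, then refine it to be strongly piecewise affine with respect to the natural triangulation $\tau_n$ of $\DDD$, and finally modify it combinatorially to satisfy the ``standard'' condition.

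Given $f \in C_c^\alpha(\DDD)$ and $\varepsilon > 0$, I would first apply Lemma \ref{lemma:piecewise_affine_approx} (after a preliminary scaling $f \mapsto \lambda f$ with $\lambda < 1$ close to $1$, which places $\lambda f$ in $C_{c^-}^\alpha$) to obtain a locally non-constant piecewise affine $g \in C_{c'}^\alpha(\DDD)$ with $c' < c$ and $\|g - f\|_\infty$ small; here $g$ is affine on the simplices of some finite decomposition $\mathcal{S}$. For $n$ large, define $h_n$ on $\DDD$ as the strongly piecewise affine function at level $n$ satisfying $h_n(v) = g(v)$ at every vertex $v$ of every triangle $T \in \tau_n$. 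On triangles $T$ contained in a single simplex of $\mathcal{S}$ we have $h_n|_T = g|_T$ exactly; on the remaining (vanishingly few) triangles, $h_n$ differs from $g$ by $O(c'\operatorname{diam}(T)^\alpha)$. A computation with barycentric coordinates on the equilateral triangles of $\tau_n$ bounds the resulting H\"older-$\alpha$ inflation by a universal factor depending only on $\alpha$.

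The third step makes $h_n$ standard. For each $T \in \tau_n$ with vertices $v_1, v_2, v_3$ and edge midpoints $m_{12}, m_{13}, m_{23}$, the three sub-triangles of $T$ in $\tau_{n+1}$ have vertex sets built from the $v_i$ and the $m_{ij}$; since triangles of $\tau_n$ meet only at corners, the midpoints $m_{ij}$ do not appear as vertices of any other triangle at level $n$ or $n+1$, so the values $h(m_{ij})$ can be chosen freely without affecting neighbouring triangles. In the generic case when $g(v_1), g(v_2), g(v_3)$ are pairwise distinct, the explicit assignment $h(m_{12}) = g(v_1)$, $h(m_{23}) = g(v_2)$, $h(m_{13}) = g(v_3)$ produces three sub-triangles with vertex value-multisets $\{g(v_1), g(v_1), g(v_3)\}$, $\{g(v_2), g(v_1), g(v_2)\}$, $\{g(v_3), g(v_3), g(v_2)\}$, each with exactly two equal entries, so $h$ is standard and locally non-constant on $T$. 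The degenerate cases in which some $g(v_i)$ coincide are handled by a short finite case analysis using analogous assignments.

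The main obstacle will be bounding the final H\"older-$\alpha$ constant of $h$. Each of the two modifications --- interpolation from $g$ to $h_n$, and the midpoint perturbation achieving standardness --- inflates the local H\"older constant by a bounded multiplicative factor depending only on $\alpha$ and the equilateral triangle geometry, and the compounded factor must be small enough that the final constant is strictly below $c$. Since Lemma \ref{lemma:piecewise_affine_approx} only guarantees $c' < c$ without direct control over the slack, in the worst case this is arranged by performing the standardization at a much deeper level $n+k$ instead of $n+1$: by spreading the perturbation needed for standardness over many sub-triangles of $\tau_{n+k}$, each individual perturbation contributes a vanishingly small amount to the H\"older constant, so the total inflation can be pushed arbitrarily close to $1$ as $k \to \infty$. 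Cross-sub-triangle estimates reduce to within-sub-triangle ones by routing through shared vertices where $h$ coincides with $g$, and the bound $\|h - f\|_\infty < \varepsilon$ follows from the smallness of each perturbation, completing the proof.
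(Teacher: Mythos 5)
First, a caveat: this paper states the lemma as a citation of \cite[Lemma~4.2]{sierc} and does not reproduce its proof, so your argument cannot be compared against the authors' own text; what follows assesses it on its own terms. Your broad strategy is sound and in the spirit of what is needed: invoke Lemma~\ref{lemma:piecewise_affine_approx} to get a piecewise-affine (hence Lipschitz) $g$, interpolate it at $V(\tau_{n+k})$ for a deep level $n+k$, and then perturb the newly created edge-midpoints inside each $T'\in\tau_{n+k}$ to enforce standardness at level $n+k+1$. Your observation that the midpoints of $T'$ belong only to sub-triangles of $T'$ — because triangles of $\tau_{n+k}$ meet only at corners — is correct and is what localizes the construction, and the key quantitative point, that the required midpoint perturbation is $O(\operatorname{Lip}(g)\cdot 2^{-(n+k)})$ and therefore negligible at the H\"older scale $2^{-(n+k)\alpha}$ for large $k$, is the right idea and is precisely what makes the inflation controllable.

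There is, however, a genuine gap in the degenerate-case step. Your explicit rule $h(m_{12})=g(v_1)$, $h(m_{23})=g(v_2)$, $h(m_{13})=g(v_3)$ already produces a constant sub-triangle whenever $g(v_1)=g(v_2)$: the sub-triangle with vertices $v_2, m_{12}, m_{23}$ receives values $g(v_2), g(v_1), g(v_2)$, all equal. That case can be saved by switching to a different permutation (for instance $h(m_{12})=g(v_3)$, $h(m_{13})=g(v_1)$, $h(m_{23})=g(v_2)$ works when exactly two vertex values coincide), but the case $g(v_1)=g(v_2)=g(v_3)$ cannot be handled by \emph{any} permutation of vertex values — every such assignment leaves $h$ constant on $T'$, destroying local non-constancy. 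This case is not vacuous: while it cannot occur when $T'$ lies in a single simplex of $\mathcal{S}$ (there $g|_{T'}$ is non-constant affine, and equal values at three affinely independent points would force $g$ constant on the simplex), it can occur on the triangles of $\tau_{n+k}$ that straddle faces of $\mathcal{S}$, where $g$ is merely piecewise affine. So the claim that degenerate cases are ``handled by a short finite case analysis using analogous assignments'' is false. The gap is repairable — set all three midpoints of such a $T'$ to $g(v_1)+\delta$ for a small $\delta\neq 0$, or first replace $g$ by $g+\ell$ for a generic tiny affine $\ell$ making $g+\ell$ injective on the finitely many vertices of $\tau_{n+k}$, and fold the extra perturbation into the H\"older budget — but as written this step does not close, and both the interpolation and the perturbation stages of the H\"older estimate remain at the level of a sketch.
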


\section{Basic results for box dimension} \label{sec:box}

First, we state the direct analogues of Lemma \ref{*lemdfb} and Theorem \ref{*thmgenex} concerning lower box dimension.


\begin{lemma}\label{lemma:lower_box_generic}
    Suppose that   $0< \aaa\leq 1$,   $F\subset\R^p$ is compact, $E\subset\R^p$ is open or closed, 
    and $\cau\subset C_1^\alpha(F)$ is open.
    If $\{f_1,f_2,\ldots\}$ is a countable dense subset of $\cau$, then there is a dense $G_\delta$ subset $\cag$ of $\cau$ such that 
    \begin{equation}\label{eqdfb_lower_boxdim}
    \sup_{f\in\cag} D_{\underline{B}*}^f(F\cap E) \le \sup_{k\in\N} D_{\underline{B}*}^{f_k}(F\cap E).
    \end{equation}
\end{lemma}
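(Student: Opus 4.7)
The plan is to mirror the proof of Lemma~\ref{*lemdfb}. Set $d = \sup_k D_{\underline{B}*}^{f_k}(F\cap E)$; by definition of $D_{\underline{B}*}^{f_k}$, for each $k$ there is a full-measure set $R_k \subseteq \R$ on which $\underline{\dim}_B(f_k^{-1}(r) \cap (F\cap E)) \leq d$. For each $n \in \N$ I would define the open set
$$\mathcal{V}_n = \bigcup_{k \in \N} \{f \in \cau : \|f - f_k\|_\infty < 1/n\},$$
which is dense in $\cau$ because $\{f_k\}$ is, and take $\cag := \bigcap_n \mathcal{V}_n$, a dense $G_\delta$ in $\cau$. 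Every $f \in \cag$ then admits a subsequence $(f_{k(n)})_{n \in \N}$ with $\|f - f_{k(n)}\|_\infty < 1/n$.

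The sup-norm control gives, for every $r \in \R$ and every $n$, the inclusion $f^{-1}(r) \cap F \cap E \subseteq f_{k(n)}^{-1}([r-1/n, r+1/n]) \cap F \cap E$, whence
$$N_\delta(f^{-1}(r) \cap F \cap E) \leq N_\delta\bigl(f_{k(n)}^{-1}([r-1/n, r+1/n]) \cap F \cap E\bigr)$$
by monotonicity of covering numbers. To bound the right-hand side, I would take a minimal $\delta$-cover $\{B_j\}$ of $F\cap E$: since $f_{k(n)}$ is $1$-H\"older-$\aaa$, each $f_{k(n)}(B_j)$ lies in an interval of length at most $2\delta^\aaa$, and $B_j$ is counted iff this interval meets $[r-1/n, r+1/n]$. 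Integrating in $r$ then yields the Fubini-type bound
$$\int_{\R} N_\delta\bigl(f_{k(n)}^{-1}([r-1/n, r+1/n]) \cap F \cap E\bigr)\,dr \leq \bigl(2/n + 2\delta^\aaa\bigr) N_\delta(F \cap E).$$
Combined with the pointwise information $\underline{\dim}_B f_{k(n)}^{-1}(s) \cap (F\cap E) \leq d$ on $R_{k(n)}$, a Markov-type argument along a subsequence $\delta_j \to 0$ tailored to $f_{k(n)}$ should show that for every $\epsilon > 0$ the set $\{r : \underline{\dim}_B f^{-1}(r) \cap (F\cap E) > d + \epsilon\}$ has measure $O(1/n)$; letting $n \to \infty$ and then $\epsilon \downarrow 0$ gives $D_{\underline{B}*}^f(F\cap E) \leq d$, proving the lemma.

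The main obstacle lies in this last step: promoting an average-in-$r$ covering bound to a $\liminf$-in-$\delta$ bound pointwise-a.e.\ in $r$. Because lower box dimension involves a $\liminf$ over $\delta$, the good small scales $\delta$ may depend on $r$, and a Markov bound at a single scale is not enough. One must instead exploit that on $R_{k(n)}$ there are arbitrarily small $\delta_j$ with $N_{\delta_j}(f_{k(n)}^{-1}(s)) < \delta_j^{-d-\epsilon}$ and transfer these estimates across the $\delta_j^\aaa$-discretisation of the height interval $[r-1/n, r+1/n]$ to $f^{-1}(r)$. Unlike the Hausdorff case of Lemma~\ref{*lemdfb}, where countable-union stability of $\dim_H$ renders this step routine, for lower box dimension the transfer is delicate and forms the technical heart of the argument.
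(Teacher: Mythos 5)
Your overall skeleton (a dense $G_\delta$ built from unions of balls around the $f_k$, then a transfer of a box-counting estimate from $f_{k(n)}$ to $f$) is the right one, and you are right that the transfer is the crux; but as written it does not close. Note also that the paper does not actually reproduce a proof here: it simply asserts that the proof of Lemma~\ref{*lemdfb} from \cite{sier} applies verbatim, because bounding $\underline{\dim}_B$ from above, like bounding $\dim_H$ from above, is a ``there exist arbitrarily small good scales'' statement, and the internal model for that argument is the proof of Lemma~\ref{lemma:upper_box_generic} with the inequalities reversed. Two concrete problems with your route. First, fixed radii $1/n$ are too coarse: the radii must depend on $k$ and $n$ (and, through a measurable cutoff, on $r$), since the scales at which $f_k^{-1}(r)$ is efficiently covered and the amount of perturbation one can afford both depend on $f_k$ and $r$. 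Second, your Fubini estimate $\int N_\delta\big(f_{k(n)}^{-1}([r-1/n,r+1/n])\cap F\big)\,dr \le (2/n+C\delta^\alpha)N_\delta(F)$ never uses the hypothesis $\underline{\dim}_B f_{k(n)}^{-1}(s)\le d$ on $R_{k(n)}$; it holds for every $1$-H\"older-$\alpha$ function already at $\epsilon=0$, and Markov applied to it can only recover the universal bound of Lemma~\ref{lemma:from_covering_to_box} (essentially $\overline{\dim}_B F-\alpha$), not the target $d$. The ``$\delta^\alpha$-discretisation'' you gesture at also does not in general control $N_\delta$ of the preimage of a short interval by $N_\delta$ of a single level in it.

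The transfer that works is cell-by-cell, not by interval thickening. Fix a standard box-counting grid $(\mathcal{T}_m)_m$ of base $q$ (say dyadic cubes), and let $a_m(X)$ count cells meeting $X$; reduce to $E=F$ as in Lemma~\ref{lemma:upper_box_generic}. Given $D_1>d$, for a.e.\ $r$ and each $n$ pick $m_{n,k,r}\ge n$ with $a_{m_{n,k,r}}(f_k^{-1}(r))<q^{-m_{n,k,r}D_1}$; this is possible because lower box dimension is a $\liminf$. For each of the finitely many cells $H$ at scale $m_{n,k,r}$ meeting $F$ but with $H\cap f_k^{-1}(r)=\emptyset$, compactness of $H\cap F$ gives $d\big(r,f_k(H\cap F)\big)>0$; hence for $g$ with $\|g-f_k\|_\infty$ below the minimum of these distances we still have $H\cap g^{-1}(r)=\emptyset$, so $a_{m_{n,k,r}}(g^{-1}(r))\le a_{m_{n,k,r}}(f_k^{-1}(r))$. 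Making these radii $\rho_{k,n,r}$ measurable in $r$, passing to a superlevel set $R_{k,n}$ with $\lambda(f_k(F)\setminus R_{k,n})<2^{-n}$ and $\rho_{k,n}:=\inf_{r\in R_{k,n}}\rho_{k,n,r}>0$, setting $\cag_n=\bigcup_k B(f_k,\rho_{k,n})\cap\cau$ and $\cag=\bigcap_n\cag_n$, and invoking Borel--Cantelli exactly as in the proof of Lemma~\ref{lemma:upper_box_generic}, one finds that for every $f\in\cag$ and a.e.\ $r$ there are infinitely many $m$ with $a_m(f^{-1}(r))<q^{-mD_1}$, i.e.\ $\underline{\dim}_B f^{-1}(r)\le D_1$. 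Notice this needs no ``nice connection type'': unlike the upper-box lemma, one only needs the open condition $r\notin f_k(H\cap F)$ to be stable under small perturbations of $f_k$, and that follows from compactness alone.
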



\begin{theorem} \label{thm:lower_box_generic}
    If  $0< \aaa\leq 1$  and $F\subset\R^p$ is compact, then there is a dense $G_\delta$ subset $\cag$ of $C_1^\alpha(F)$ 
    such that for every $f\in\cag$ we have $D_{\underline{B}*}^f(F) = D_{\underline{B}*}(\alpha,F)$.   
\end{theorem}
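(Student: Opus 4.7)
The plan is to prove this by adapting the proof of the Hausdorff-dimensional analogue Theorem \ref{*thmgenex} from \cite{sier}, with Lemma \ref{*lemdfb} replaced by Lemma \ref{lemma:lower_box_generic}. Concretely, I would construct two dense $G_\delta$ subsets $\mathcal{G}^{(1)}, \mathcal{G}^{(2)} \subset C_1^\aaa(F)$ witnessing the two inequalities $D_{\underline{B}*}^f(F) \geq D_{\underline{B}*}(\aaa, F)$ and $D_{\underline{B}*}^f(F) \leq D_{\underline{B}*}(\aaa, F)$ separately, and take $\mathcal{G} := \mathcal{G}^{(1)} \cap \mathcal{G}^{(2)}$, which is again a dense $G_\delta$ by the Baire category theorem.

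For the inequality ``$\geq$'', the proof is immediate from the definition \eqref{def:generic_lower}: for each $n \in \N$, pick $\mathcal{H}_n \in \mg_{1,\aaa}$ with $\inf_{f \in \mathcal{H}_n} D_{\underline{B}*}^f(F) \geq D_{\underline{B}*}(\aaa, F) - 1/n$, and set $\mathcal{G}^{(1)} := \bigcap_n \mathcal{H}_n$; this is a dense $G_\delta$ and gives the desired lower bound pointwise. For the inequality ``$\leq$'', the strategy is to apply Lemma \ref{lemma:lower_box_generic} with $\mathcal{U} = C_1^\aaa(F)$, $E = \R^p$ (so $F \cap E = F$), and a countable dense subset $\{f_k\} \subset C_1^\aaa(F)$ chosen inside the set $M := \{f : D_{\underline{B}*}^f(F) \leq D_{\underline{B}*}(\aaa, F)\}$; the lemma then produces a dense $G_\delta$ $\mathcal{G}^{(2)}$ satisfying $\sup_{f \in \mathcal{G}^{(2)}} D_{\underline{B}*}^f(F) \leq \sup_k D_{\underline{B}*}^{f_k}(F) \leq D_{\underline{B}*}(\aaa, F)$.

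The main obstacle is thus showing that $M$ is dense in $C_1^\aaa(F)$, so that a countable dense subset of $M$ exists and the argument of the previous paragraph applies. I would argue this by contradiction, mirroring the approach used in \cite{sier} for the Hausdorff case: suppose that there were a nonempty open $\mathcal{U} \subset C_1^\aaa(F)$ on which $D_{\underline{B}*}^f(F) > D_{\underline{B}*}(\aaa, F)$ pointwise. A Baire-category analysis inside the Baire subspace $\mathcal{U}$, using that $\mathcal{U} = \bigcup_n \{f \in \mathcal{U} : D_{\underline{B}*}^f(F) > D_{\underline{B}*}(\aaa, F) + 1/n\}$, would furnish some $\epsilon > 0$ and a sub-open $\mathcal{V} \subset \mathcal{U}$ on which $D_{\underline{B}*}^f(F) \geq D_{\underline{B}*}(\aaa, F) + \epsilon$ holds generically. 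Gluing such a generic dense $G_\delta$ of $\mathcal{V}$ with a suitable refinement of $\mathcal{G}^{(1)}$ restricted to $C_1^\aaa(F) \setminus \overline{\mathcal{V}}$ (exploiting the $\mathcal{H}_n$ to lift the non-strict bound to a uniform strict inequality) would yield a dense $G_\delta$ of $C_1^\aaa(F)$ whose infimum of $D_{\underline{B}*}^f(F)$ strictly exceeds $D_{\underline{B}*}(\aaa, F)$, contradicting the defining sup-inf characterization of $D_{\underline{B}*}(\aaa, F)$ in \eqref{def:generic_lower}. The whole construction is formally identical to the Hausdorff-dimension version in \cite{sier}, Lemma \ref{lemma:lower_box_generic} providing the only ingredient that had to be re-proved in the new context.
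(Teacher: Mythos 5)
Your decomposition into two dense $G_\delta$ sets is the right shape, and the construction of $\mathcal{G}^{(1)}$ as well as the reduction of the ``$\le$'' inequality to the denseness of $M := \{f : D_{\underline{B}*}^f(F) \le D_{\underline{B}*}(\alpha,F)\}$ via Lemma \ref{lemma:lower_box_generic} are both correct. The gap is in the argument that $M$ is dense, and it is a real one.

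After producing $\mathcal{V}$ and $\epsilon$, you propose to glue a dense $G_\delta$ of $\mathcal{V}$ (where $D_{\underline{B}*}^f \ge D_{\underline{B}*}(\alpha,F)+\epsilon$) with $\mathcal{G}^{(1)}$ restricted to $C_1^\alpha(F)\setminus\overline{\mathcal{V}}$, hoping for a dense $G_\delta$ on which the infimum strictly exceeds $D_{\underline{B}*}(\alpha,F)$. But on the complement of $\overline{\mathcal{V}}$ the set $\mathcal{G}^{(1)}$ only certifies $D_{\underline{B}*}^f \ge D_{\underline{B}*}(\alpha,F)$, so the glued set has $\inf_f D_{\underline{B}*}^f \ge D_{\underline{B}*}(\alpha,F)$ with equality possible, which yields no contradiction. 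The parenthetical ``exploiting the $\mathcal{H}_n$ to lift the non-strict bound to a uniform strict inequality'' cannot work: each $\mathcal{H}_n$ gives only the bound $D_{\underline{B}*}(\alpha,F)-1/n$, and by definition no dense $G_\delta$ can have its infimum strictly exceed the supremum $D_{\underline{B}*}(\alpha,F)$. Moreover, the Baire step that is supposed to extract a uniform $\epsilon$ on a sub-ball $\mathcal{V}$ from the pointwise inequality on $\mathcal{U}$ is asserted without justification; the sets $\{f: D_{\underline{B}*}^f(F) > D_{\underline{B}*}(\alpha,F)+1/n\}$ are not obviously of any useful descriptive class.

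The route the paper points to (it defers the proof verbatim to \cite{sier}, and the same mechanism is spelled out for the upper box case in the proof of Theorem \ref{thm:upper_box_generic}) sidesteps both issues by fixing $k$ from the start and working inside $\mathcal{G}^{(1)}$: set $G_k := \{f\in\mathcal{G}^{(1)} : D_{\underline{B}*}^f(F)\le D_{\underline{B}*}(\alpha,F)+1/k\}$ and note $G_k$ cannot be nowhere dense, for otherwise $\mathcal{G}^{(1)}\setminus\overline{G_k}$ would be a dense $G_\delta$ on which $D_{\underline{B}*}^f > D_{\underline{B}*}(\alpha,F)+1/k$ holds everywhere, contradicting the supremum in \eqref{def:generic_lower}. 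This gives the strict inequality without any gluing, but it only shows that $G_k$ is dense in \emph{some} ball $B(f_1,\delta_1)$. A separate spreading step is then needed to pass from this local denseness to the global hypothesis of Lemma \ref{lemma:lower_box_generic}: cover $F$ by small pieces, patch an arbitrary $g_0$ on each piece with a suitably shifted element of $G_k$ (using the H\"older control on oscillation over small pieces, exactly as in the chain $g_0 \to g_1 \to g_2 \to g_3$ in the proof of Theorem \ref{thm:upper_box_generic}), and then apply Lemma \ref{lemma:lower_box_generic} piecewise. Your sketch omits this spreading step, and it is the crux of the proof; the direct gluing you describe does not replace it.
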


\begin{proof}[Proofs of Lemma \ref{lemma:lower_box_generic} and Theorem \ref{thm:lower_box_generic}]
    Due to the highly similar nature of 
    Hausdorff dimension and
    lower box dimension one can repeat verbatim the proofs of Lemma \ref{*lemdfb} and Theorem \ref{*thmgenex}, presented in \cite{sier}. We omit the details.
\end{proof}

Hence the foundations of $D_{\underline{B}*}$ do not contain novelties, and as we preluded in the introduction, telling this quantity apart from $D_*$ seems to be difficult.
Notably, any lower estimate on $D_{*}$ also holds for $D_{\underline{B}*}$, and as the upper estimates for $D_{*}$ generally rely on Lemma \ref{*lemdfb}, 
using a dense set of functions with level sets having equal box dimension and Hausdorff dimension, Lemma \ref{lemma:lower_box_generic} yields the same upper estimates 
for $D_{\underline{B}*}$.
For example, \cite[Theorem~3.2]{sierc} and \cite[Theorem~4.5]{sierc} can be converted to a lower box dimension form by simply replacing $D_{*}$ by $D_{\underline{B}*}$:

\begin{theorem}\label{theorem:lower_box_sier}
    For any $0<\alpha<1$ and the Sierpiński triangle $\Delta$, we have 
    $$\frac{\frac{\alpha}{2}}{1+\frac{1+\log\frac{3}{\alpha}}{\log 2} + \frac{2}{\alpha}}\leq D_{\underline{B}*}(\alpha, \Delta)\leq 1-2^{-\alpha}.$$
\end{theorem}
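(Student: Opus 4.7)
The plan is to derive both inequalities from the corresponding Hausdorff dimension estimates \cite[Theorem~3.2]{sierc} and \cite[Theorem~4.5]{sierc}, verifying in each case that the replacement of $D_*$ by $D_{\underline{B}*}$ goes through without substantive change.

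For the lower bound, I would exploit the universal inequality $\dim_H A \le \underline{\dim}_B A$, valid for every bounded set $A$. This immediately yields $D^f(r,F) \le D_{\underline{B}}^f(r,F)$ for every function $f$ and every level $r$, so $D_*^f(F) \le D_{\underline{B}*}^f(F)$ pointwise in $f$. Taking suprema of infima over $\mg_{1,\aaa}$ preserves this inequality, giving $D_*(\aaa,F) \le D_{\underline{B}*}(\aaa,F)$. Applying this with $F=\DDD$ and inserting the lower estimate of \cite[Theorem~3.2]{sierc} directly produces the left-hand inequality with no further effort.

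For the upper bound, I would rerun the proof of \cite[Theorem~4.5]{sierc} verbatim, but replace every appeal to Lemma \ref{*lemdfb} by an appeal to Lemma \ref{lemma:lower_box_generic}. This reduces the problem to bounding $\sup_k D_{\underline{B}*}^{f_k}(\DDD)$ for a judiciously chosen countable dense family $\{f_k\}\subset C_1^{\alpha}(\DDD)$. As in \cite{sierc}, the natural candidate is the family produced by Lemma \ref{lemma:standard_piecewise_affine}: locally non-constant standard strongly piecewise affine functions with Hölder constant strictly below $1$. Such functions are affine on each small triangle, so their level sets are finite unions of intersections of $\DDD$ with straight lines. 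Since $\DDD$ is self-similar with finitely many directions and $\dim_H\DDD = \overline{\dim}_B\DDD$, Proposition \ref{prop:wenxi} applied simultaneously to Hausdorff and box dimensions gives that for each such $f_k$, at Lebesgue-almost every level $r$ in its range the dimension drop is standard and in fact $D^{f_k}(r,\DDD) = D_{\underline{B}}^{f_k}(r,\DDD)$. Hence $D_{\underline{B}*}^{f_k}(\DDD) = D_*^{f_k}(\DDD)$ for every $k$, and the argument of \cite[Theorem~4.5]{sierc} already bounds the supremum of the right-hand side by $1 - 2^{-\alpha}$.

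The main obstacle is precisely establishing the pointwise coincidence of Hausdorff and lower box dimension of generic slices on the approximating family; this is where the finite-directions self-similarity of $\DDD$ and the full strength of Proposition \ref{prop:wenxi} (as opposed to the classical Marstrand slicing theorem, which speaks only of Hausdorff dimension) are indispensable. Once that coincidence is secured, the combinatorial and geometric estimates underpinning the numerical value $1-2^{-\alpha}$ in \cite{sierc} treat level sets only through a dimension quantity that agrees with $\underline{\dim}_B$ on almost every slice, so the transfer requires no additional modification.
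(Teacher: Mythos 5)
The lower bound half of your argument is correct and coincides with the paper's intent: the pointwise inequality $\dim_H \le \underline{\dim}_B$ propagates to $D_*^f(F) \le D_{\underline{B}*}^f(F)$ for every $f$, hence after taking $\sup\inf$ to $D_*(\alpha,F) \le D_{\underline{B}*}(\alpha,F)$, and the lower estimate of \cite[Theorem~3.2]{sierc} transfers immediately.

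The upper bound, however, contains a genuine gap, and the route through Proposition \ref{prop:wenxi} cannot be repaired. Proposition \ref{prop:wenxi} is a statement about $\gamma_{p,p-1}$-almost all directions $W\in G(p,p-1)$; it says nothing about the three fixed directions parallel to the sides of $\Delta$ produced by the standard piecewise affine approximants of Lemma \ref{lemma:standard_piecewise_affine}, which form a $\gamma_{p,p-1}$-null set. Worse, these directions genuinely are exceptional. If your application of Proposition \ref{prop:wenxi} were valid, you would conclude that at Lebesgue-a.e.\ level $r$ the slice $f_k^{-1}(r)$ has dimension $\dim_H\Delta - 1 = \tfrac{\log 3}{\log 2}-1 \approx 0.585$; this would force $D_*^{f_k}(\Delta) \ge \tfrac{\log 3}{\log 2}-1 > 1-2^{-\alpha}$ once $\alpha$ is close to $1$, flatly contradicting the upper bound of \cite[Theorem~4.5]{sierc} that you are simultaneously invoking. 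That internal inconsistency is a clear signal that the proposition has been applied outside its hypotheses.

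The mechanism the paper actually relies on is simpler and immune to this objection. The upper estimate of \cite[Theorem~4.5]{sierc} is proved by bounding, for Lebesgue-a.e.\ $r$, the number of triangles of $\tau_n$ that meet $f_k^{-1}(r)$. Since the triangles of $\tau_n$ have uniformly comparable diameter of order $2^{-n}$, such a counting bound already controls $\overline{\dim}_B(f_k^{-1}(r))$ — and a fortiori $\underline{\dim}_B(f_k^{-1}(r))$ — directly, not merely $\dim_H(f_k^{-1}(r))$. There is therefore no coincidence of dimensions to be established by an external slicing theorem: the covering computation in \cite{sierc} is a box dimension estimate to begin with, and the Hausdorff form stated there is simply the weaker corollary that was recorded at the time. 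Once one observes this, replacing Lemma \ref{*lemdfb} by Lemma \ref{lemma:lower_box_generic} and rereading the argument of \cite{sierc} is indeed all that is required, exactly as the paragraph preceding Theorem \ref{theorem:lower_box_sier} indicates.
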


On Figure \ref{fig:sier_upper_box_estimates} one can see the estimates
obtained from this theorem along with the ones obtained for
$D_{\overline{B}*}(\alpha, \Delta)$.
As far as upper estimates rely on dense sets of functions with nice level sets in the aforementioned sense, we will not have different estimates for $D_{\underline{B}*}$ and $D_*$. 
 However, so far
all of our constructions either in this paper, or in our earlier papers were tame in this sense, and if the domain $F$ is nice, for instance an OSC self-similar set,
it seems hard to produce functions which lack this property and for which one can still calculate dimensions precisely. This last requirement tempts one to use functions which have level sets
consisting of planar cross-sections of  $F$ with directions interacting well with the similarities determining $F$, so that they are easy to track.
For instance, in the case of the Sierpi\'nski triangle, both \cite[Theorem~4.5]{sierc} and Theorem \ref{thm:sier_upper_box} rely on such a construction, each level set
consisting of line segments parallel to the sides of the triangle
and one can simply count the triangles of $\tau_n$ intersecting a level set in order
to determine the dimension of the level set. However, this method yields coverings of uniform diameter, thus it is unable to tell apart Hausdorff and box-counting dimension by nature. 
It would be interesting to see examples which rely on a different approach.

 We conclude our observations concerning lower box dimensions by providing a compact example for which $D_{*}$ differs from $D_{\underline{B}*}$. Naturally, this fractal is not tame in the above sense,
as it has different Hausdorff and lower box dimension.

\begin{theorem}
    Let $F_0 = \{0\} \cup \left\{{1}/{n}: n=1,2,\ldots\right\}$, and let $F=F_0 \times [0,1]\subseteq \mathbb{R}^2$. Then for any $\alpha\in (0, 1)$, we have $D_{*}(\alpha, F) = 0$ , while
    $D_{\underline{B}*} = {1}/{2}$.
\end{theorem}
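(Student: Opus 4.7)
The plan is to use a common dense class $\mathcal{D}$ of piecewise affine functions to establish the upper bounds in both equalities, and then to exploit the limit segment $\{0\}\times[0,1]$ of $F$ to obtain the matching lower bound $D_{\underline{B}*}(\alpha,F)\ge 1/2$.

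First I would introduce the family $\mathcal{D}$ of piecewise affine functions $f\in C_1^\alpha(F)$ whose affine expression $a_Sx+b_Sy+c_S$ on each defining simplex $S$ satisfies $b_S\ne 0$, and show it is dense in $C_1^\alpha(F)$: starting from a piecewise affine approximant with H\"older constant $c<1$ supplied by Lemma \ref{lemma:piecewise_affine_approx}, the perturbation $g(x,y)=f(x,y)+\eta y$ for a small $\eta\in(0,1-c)$ avoiding the finite set $\{-b_S\}$ preserves the piecewise affine decomposition, stays inside $C_1^\alpha(F)$, and forces $b_S+\eta\ne 0$ on every simplex. Then for $f\in\mathcal{D}$ with simplices $S_1,\dots,S_m$ and all but finitely many levels $r$, the level set decomposes as $f^{-1}(r)=\bigcup_{i=1}^m L_i\cap F$ with each $L_i\subset S_i$ a non-vertical line segment. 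Since the projection $\pi_x$ is bi-Lipschitz on any non-vertical line, each $L_i\cap F$ is bi-Lipschitz equivalent to a subset of the countable set $F_0$, so $\dim_H(L_i\cap F)=0$ and $\overline{\dim}_B(L_i\cap F)\le\overline{\dim}_B F_0=1/2$. Taking finite unions gives $\dim_H f^{-1}(r)=0$ and $\overline{\dim}_B f^{-1}(r)\le 1/2$ for a.e. $r$, hence $D_*^f(F)=0$ and $D_{\underline{B}*}^f(F)\le 1/2$. Applying Lemma \ref{*lemdfb} and Lemma \ref{lemma:lower_box_generic} with a countable dense subset of $\mathcal{D}$ would then produce the desired dense $G_\delta$ subsets of $C_1^\alpha(F)$ and yield $D_*(\alpha,F)=0$ and $D_{\underline{B}*}(\alpha,F)\le 1/2$.

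Next I would establish the matching lower bound on the open dense set $\mathcal{U}=\{f\in C_1^\alpha(F):f|_{\{0\}\times[0,1]}\text{ is non-constant}\}\supseteq\mathcal{D}$. Writing $f_n(y)=f(1/n,y)$, the H\"older condition gives $|f_0(y)-f_n(y)|\le(1/n)^\alpha$ uniformly in $y$, so the ranges $I=f_0([0,1])$ and $I_n=f_n([0,1])$ satisfy $\mathrm{dist}_H(I,I_n)\le(1/n)^\alpha$. Hence for every $r\in\mathrm{int}\,I$ there is $N(r)$ with $r\in I_n$ for $n\ge N(r)$; picking $y_n\in f_n^{-1}(r)$ yields $\{(1/n,y_n):n\ge N(r)\}\subseteq f^{-1}(r)$. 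Since $\pi_x$ is $1$-Lipschitz and $\{1/n:n\ge N(r)\}$ has lower box dimension $1/2$,
\[
\underline{\dim}_B f^{-1}(r)\ge\underline{\dim}_B\{1/n:n\ge N(r)\}=\tfrac12.
\]
Because $\lambda(I)>0$, this gives $D_{\underline{B}*}^f(F)\ge 1/2$ for every $f\in\mathcal{U}$, whence $D_{\underline{B}*}(\alpha,F)\ge 1/2$.

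The most delicate point will be the density of $\mathcal{D}$: one has to check that the $+\eta y$ perturbation respects the piecewise affine decomposition and stays within the H\"older bound, but this is routine since $\eta y$ is itself affine with H\"older constant $\eta$, and only finitely many bad values of $\eta$ (those with $b_S+\eta=0$ for some $S$) need to be avoided. All remaining ingredients are standard bi-Lipschitz estimates on non-vertical line segments together with the Hausdorff-distance convergence of the slice ranges, both of which exploit the product structure $F=F_0\times[0,1]$ and the elementary fact that $\dim_B F_0=1/2$.
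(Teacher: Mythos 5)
Your proposal is correct, but it establishes the upper bounds via a genuinely different dense family than the paper does. The paper takes arbitrary piecewise affine approximants $g$ and truncates them near $x=0$ by setting $g_n(x,y)=g(1/n,y)$ for $x\le 1/n$; this forces the slices of a level set over $\{1/n\}\times[0,1]$ to stabilize for large $n$ (all equal to the slice over $\{0\}\times[0,1]$), so that apart from finitely many bad levels each level set has at most a bounded number of points over every $x\in F_0$, and a direct box-count then gives $\underline{\dim}_B f_k^{-1}(r)\le \dim_B F_0 = 1/2$ and $\dim_H f_k^{-1}(r)=0$. You instead tilt the approximants by adding $\eta y$ to kill vertical level lines, and then invoke bi-Lipschitz invariance of box and Hausdorff dimension under the projection $\pi_x$ restricted to each non-vertical level segment, landing inside $F_0$; this is arguably more conceptual and avoids the explicit box-count, at the (small) cost of the perturbation argument for density of $\mathcal{D}$. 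Note that the H\"older-$\alpha$ seminorm of $\eta y$ on $F$ (which has diameter $\sqrt 2$) is $\eta\cdot 2^{(1-\alpha)/2}$ rather than $\eta$, so your admissible window for $\eta$ should be $\bigl(0,(1-c)2^{-(1-\alpha)/2}\bigr)$ minus the finite bad set $\{-b_S\}$; this is cosmetic and does not affect the conclusion. Your lower bound argument is essentially the same as the paper's: both exploit that non-constancy of $f$ on the limit segment $\{0\}\times[0,1]$ is an open dense condition, and that for $r$ in a slightly shrunk interval $J\subset f(\{0\}\times[0,1])$ the level set $f^{-1}(r)$ meets every sufficiently small slice $\{1/n\}\times[0,1]$, so that $Pr_x f^{-1}(r)$ contains a tail of $F_0$, which already has lower box dimension $1/2$.
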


\begin{proof}
    We recall the well-known fact that $\dim_B F_0 = {1}/{2}$, (Example 3.5 of 
\cite{[Fa1]}).

    By applying Lemma \ref{lemma:piecewise_affine_approx}, we can find a countable dense subset $\{f_1, f_2, \ldots\}$ of $C_1^{\alpha}(F)$ such that each $f_k$ is
    piecewise affine, moreover we can also suppose that 
for each $k$, there is an $N_{k}$, such that 
\begin{equation}\label{*bound}
\text{$f_{k}(1/n, y)=
f_{k}(0, y)$ for any $n\geq N_k$ and $y\in [0,1]$.}
\end{equation}
Indeed, for arbitrary piecewise affine function $g\in C_1^{\alpha}(F)$, we can put 
\begin{displaymath}
    g_n(x, y)= \begin{cases}
        g(x, y) & \text{if } x>1/n, \\
        g(1/n, y), & \text{if } x\leq 1/n,
    \end{cases}
\end{displaymath}
which defines a sequence of piecewise affine functions which are also in $C_1^{\alpha}(F)$, and this sequence uniformly converges to $g$ due to $g$ being uniformly continuous.
 Notice that any $f_k$ restricted to any $\{x\}\times [0, 1]$ is a one-dimensional piecewise affine function of finitely many pieces, thus apart from finitely many values
    it admits each of its values finitely many times. (The exceptional values arise from the fact that $f_k$ might have constant pieces restricted to $\{x\}\times [0, 1]$.)
 From \eqref{*bound} it follows that altogether apart from finitely many values $f_k$ admits each of its values
   countably many times. Consequently, $D_*^{f_k}(F)=0$, thus Lemma \ref{*lemdfb} yields $D_{*}(\alpha, F) = 0$.

    Concerning $D_{\underline{B}*}$, we prove two inequalities. The upper bound follows by using the same dense set of functions $\{ f_{k} \}$ and Lemma \ref{lemma:lower_box_generic}. Notably, if we fix $k$,
    apart from finitely many exceptional $r$s, $f_k^{-1}(r) \cap (\{1/n\}\times [0, 1])$ for each $n\in\N$ consists of finitely many points 
and one can select a bound on the number of these points which is not depending on $n$.  Using \eqref{*bound} an easy calculation yields that 
    $$\underline{\dim}_B f_k^{-1}(r) \leq \dim_B F_0 = \frac{1}{2},$$ 
    and hence $D_{\underline{B}*} \leq {1}/{2}$.

    For the other inequality, notice that the generic $f$ is non-constant on $\{0\} \times [0, 1]$, that is, $f(\{0\} \times [0, 1])$ contains some interval $I$. By continuity, a bit shorter interval $J$
    is contained by any $f(\{x\} \times [0, 1])$ for small enough $x\in F_0$. Thus for any $r\in J$, we have that $f^{-1}(r)$ contains a point of $\{x\} \times [0, 1]$ for any
    $x \in F_0 \cap [0, x_0]$ for some $x_0>0$. Thus the projection $Pr_x f^{-1}(r)$ contains $F_0 \cap [0, x_0]$, which has box dimension ${1}/{2}$. 
    Consequently, $D_{\underline{B}*} \geq {1}/{2}$.

\end{proof}
\xcx

From now on, we will be concerned with upper box dimension exclusively. To start our investigations, we will use the following definitions:

\begin{definition} \label{def:nicely_connected}
    We say that $(\mathcal{T}_n)_{n=1}^{\infty}$ is a  {\it box dimension defining sequence} (BDDS) of $F\subseteq \mathbb{R}^p$ 
    if 
    \begin{itemize}
        \item  each $\mathcal{T}_n$ is a countable family of closed subsets covering  $F$ 
        in a non-overlapping manner,
        \item $\mathcal{T}_{n+1}$ is a refinement of $\mathcal{T}_{n}$ for each $n\geq 1$, that is for any $T\in\mathcal{T}_n$, 
        we have $T\cap F=\bigcup_{i=1}^{m}T_i \cap F$ for some $T_i\in \mathcal{T}_{n+1}$ ($i=1, 2, ..., m$),
        \item there exists $C>0$ and $0<q<1$ such that for any $n$ and $H\in \mathcal{T}_n$,
        $H$ contains a ball with diameter $C^{-1}q^{n}$ and $\diam(H) \leq Cq^{n}$. 
         The quantity  $q$ is the {\it base} of $(\mathcal{T}_n)_{n=1}^{\infty}$. 
    \end{itemize}
    
    The set $F\subseteq \mathbb{R}^p$ has {\it nice connection type}, if  it has BDDS  
    such that $F\cap H$ has countably many components for any $H\in \bigcup_{n=1}^{\infty}\mathcal{T}_n$.
\end{definition}

The philosophy behind the term "box dimension defining sequence" is that using such a sequence, one can calculate  the lower and upper
box dimension of any bounded set $F$.  
Notably, if for a set $X$ we put $A_n(X) = \{H\in\mathcal{T}_n: H\cap X \neq \emptyset\}$, and $a_n(X)$ is the
cardinality of $A_n(X)$
then $\liminf \frac{\log a_n(F)}{-n\log q} = \underline{\dim}_B(F)$ and $\limsup \frac{\log a_n(F)}{-n\log q} = \overline{\dim}_B(F)$.
 (Notice that any $\mathcal{T}_n$ is locally finite as each of its members contains a ball with radius bounded away from 0, hence
$a_n(F)<\infty$ for bounded sets.) 

The most standard example for such a sequence is given by $\mathcal{T}_n$ being the set of lattice cubes of edge length $2^{-n}$.
A similar example is obtained in $\mathbb{R}^2$ by putting $\mathcal{T}_n$ to be the set of lattice regular triangles of edge length
$2^{-n}$. The usage of these BDDSs immediately yields that the Sierpiński triangle and the Sierpiński carpet have
nice connection types. 

 While it is tempting to think that every connected set has nice connection type, it is not the case. For instance, this assumption fails for
$$C^{*} = \{(x, y): 0\leq x \leq 1, y\in x\cdot C\},$$
where $C$ is the triadic Cantor set, that is $C^*$ is the union of continuum many different line segments starting from the origin.
Nevertheless, additional constraints might enforce the presence of nice connection type, for instance:

\begin{lemma}\label{lemma:connected_is_nice_connected_self_similar}
    Assume that $F\subseteq \mathbb{R}^p$ is a connected, OSC self-similar set. Then $F$ has nice connection type.
\end{lemma}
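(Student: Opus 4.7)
The plan is to construct an explicit box dimension defining sequence from the self-similar structure, using the central open set $V_c$ from Theorem \ref{thm:central_open_set}, and then to show that each member of the sequence meets $F$ in a \emph{connected} set — so the nice connection requirement holds trivially, with exactly one component per piece.

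First I fix any $q\in(0,1)$ and, writing $\rho_{\bf i}=\rho_{i_1}\cdots\rho_{i_k}$ for the similarity ratio of $\Psi_{\bf i}$, form the stopping-time word set
\[
W_n=\{\,{\bf i}:\ \rho_{\bf i}\le q^n<\rho_{{\bf i}^-}\,\},
\]
where ${\bf i}^-$ is ${\bf i}$ with its last letter removed, and set $\mathcal{T}_n=\{\Psi_{\bf i}(\overline{V_c}):{\bf i}\in W_n\}$. I would then verify the BDDS axioms: $W_n$ is finite and every infinite address has a unique prefix in $W_n$, yielding $F=\bigcup_{{\bf i}\in W_n}\Psi_{\bf i}(F)$ and thus the covering property; non-overlap of the members of $\mathcal{T}_n$ comes from prefix-freeness of $W_n$ combined with $V_c$ witnessing OSC; the diameter and ball-containment bounds with base $q$ follow from $\rho_{\bf i}\in((\min_i\rho_i)q^n,q^n]$ and $V_c$ being a bounded open set; and every ${\bf j}\in W_{n+1}$ uniquely extends some ${\bf i}\in W_n$, which — once the identity below is in hand — upgrades to refinement of $\mathcal{T}_{n+1}$ over $\mathcal{T}_n$ on $F$.

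The heart of the argument, and the step I expect to be the main obstacle, is the identity
\[
F\cap\Psi_{\bf i}(\overline{V_c})=\Psi_{\bf i}(F)\qquad\text{for every }{\bf i}\in W_n.
\]
The inclusion $\supseteq$ is immediate from $F\subseteq\overline{V_c}$. For the reverse, I take $x\in F\cap\Psi_{\bf i}(\overline{V_c})$, set $y=\Psi_{\bf i}^{-1}(x)\in\overline{V_c}$, and use $F=\bigcup_{{\bf m}\in W_n}\Psi_{\bf m}(F)$ to pick ${\bf m}\in W_n$ with $x\in\Psi_{\bf m}(F)$. If ${\bf m}={\bf i}$ then $y\in F$ at once. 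Otherwise, cancelling the maximal common prefix of ${\bf i}$ and ${\bf m}$ writes $\Psi_{\bf i}^{-1}\circ\Psi_{\bf m}$ as a neighbor map $g\in\mathcal{N}$, so $y\in g(F)\subseteq H$, with $H$ as in \eqref{*neighb}. But $V_c\cap H=\emptyset$ follows directly from $V_c=\{z:d(z,F)<d(z,H)\}$, so $y\in\partial V_c\cap H$; approximating $y$ by points $y_k\in V_c$ and passing to the limit in $d(y_k,F)<d(y_k,H)$ yields $d(y,F)\le d(y,H)=0$, i.e.\ $y\in F$. In either case $y\in F$, and hence $x\in\Psi_{\bf i}(F)$.

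Once this identity is established, the conclusion is immediate: $\Psi_{\bf i}(F)$ is the continuous image of the connected set $F$, hence connected, so each $F\cap H$ with $H\in\mathcal{T}_n$ is connected, which is much stronger than having countably many components. Thus $F$ has nice connection type. The reason the argument requires the central open set specifically, rather than an arbitrary OSC witness, is exactly the built-in disjointness $V_c\cap H=\emptyset$; with a generic witness, points of $\Psi_{\bf j}(F)$ for neighboring ${\bf j}\in W_n$ could intrude into $\Psi_{\bf i}(\overline V)$ through the boundary and genuinely split $F\cap\Psi_{\bf i}(\overline V)$ into several pieces, so the reduction to "one component per tile" would collapse.
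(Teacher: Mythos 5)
Your proof is correct and ends up in the same place as the paper's argument (namely, that $F\cap\Psi_{\mathbf i}(\overline{V_c})=\Psi_{\mathbf i}(F)$, so each piece meets $F$ in a single connected set), but the route is genuinely different in two places. First, to organize the pieces into a BDDS you use the stopping-time word set $W_n$ with the uniform-diameter criterion $\rho_{\mathbf i}\le q^n<\rho_{\mathbf i^-}$; the paper instead throws \emph{all} words $\Phi$ into a pool, bins $\Phi(F')$ by diameter to get preliminary families $\mathcal{T}_n^0$, and then prunes each nested chain down to its maximal element to restore non-overlap. Your $W_n$ is automatically prefix-free, so no pruning step is needed; this is a cleaner construction, and it also supplies the refinement property of $\mathcal{T}_{n+1}$ over $\mathcal{T}_n$ for free (every ${\mathbf j}\in W_{n+1}$ has a unique prefix in $W_n$). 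Second, and more substantively, your proof of the key identity is not an induction on word length. You pick any ${\mathbf m}\in W_n$ with $x\in\Psi_{\mathbf m}(F)$, cancel the common prefix, and observe that if ${\mathbf m}\ne{\mathbf i}$ then $\Psi_{\mathbf i}^{-1}\circ\Psi_{\mathbf m}$ is literally one of the neighbor maps in $\mathcal{N}$, so $y=\Psi_{\mathbf i}^{-1}(x)\in H$; then $\overline{V_c}\cap H\subseteq F$ follows in one stroke by letting $d(y_k,F)<d(y_k,H)$ pass to the limit. The paper's induction, by contrast, reduces to comparing $\Phi(F')$ against a \emph{single-level} neighbor set $H_0=\bigcup_{j\ne i_k}\Psi_{i_k}^{-1}\Psi_j(F)$ and tracking inclusions across one composition step at a time. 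Your single-step argument is shorter because the stopping-word structure lets you invoke a global neighbor map directly, whereas the paper's binning does not keep track of which word a piece came from, so it has to rebuild the comparison level by level. Both arguments rely on the same essential property of the central open set: $V_c\cap H=\emptyset$, hence $\overline{V_c}\cap H\subseteq\partial V_c\cap H\subseteq F$. One small cleanup worth adding: state explicitly that $W_n$ is finite (bounded word length forced by $\rho_{\mathbf i^-}>q^n$), and start the indexing at $n\ge1$ so that $\rho_{\emptyset}=1>q^n$ and the stopping rule is well-posed.
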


\begin{proof}
    Denote the associated iterated function system by $(\Psi_i)_{i=1}^{m}$, and their respective similarity ratios by $(q_i)_{i=1}^m$.  Moreover, let $q = \min_{1\leq i \leq m}q_i$.
    Due to OSC, we can find some non-empty bounded open set $V$ such that $\bigcup_{i=1}^{m}\Psi_i(V)\subseteq V$, and
    the union is disjoint. In light of Theorem \ref{thm:central_open_set}, let $V$ be the central open set of $F$.
    
    Choose $C$ such that $\diam(V)$ and $q^{-1}$ are both less than $C$, furthermore $V$ contains a ball of diameter $C^{-1}$.
    Denote the closure of $V$ by $F'$. Then we have that the sets $\Psi_i(F')$ are non-overlapping, and $\bigcup_{i=1}^{m}\Psi_i(F')\subseteq F'$. Thus $F'\supseteq F$.
    Now let $\mathcal{T}$ consist of all the sets which can be obtained from $F'$ by applying a finite composition consisting of maps in $\{\Psi_i\}_{i=1}^{m}$. 
    We are going to sort elements of $\mathcal{T}$ to preliminary sets $(\mathcal{T}_n^0)_{n=0}^{\infty}$. Notably, for $T\in \mathcal{T}$, let $T\in \mathcal{T}_n^0$ if $n$ is the smallest
    integer for which $\diam(T)\leq Cq^n$. Now clearly each $\mathcal{T}_n^0$ covers $F$, and as the sets $\Psi_i(F')$ were non-overlapping, any overlapping of elements in some $\mathcal{T}_n^0$
    arises from containments. Thus the overlappings can be eliminated as follows: we obtain $\mathcal{T}_n$ from $\mathcal{T}_n^0$ so that any growing chain of sets in $\mathcal{T}_n^0$ is replaced by its maximal element. These families satisfy the conditions
    concerning BDDS: 
    \begin{itemize}
        \item any $\mathcal{T}_n$ is non-overlapping as any intersections are filtered by the final step, and it covers $F$ as $\mathcal{T}_n^0$ did so and the filtering step does not reduce the union.
        \item The refinement property is trivial as well by construction.
        \item Finally, the last condition on the existence of constants $C>0$ and $0<q<1$  is satisfied by our choice of $C$ and $q$: the one concerning the diameter holds
            by the definition of $\mathcal{T}_n^0$, while the one on the contained ball follows from the fact that the "parent" $T'$ of $T$ is a similar copy of $F'$ 
            with diameter exceeding $Cq^{n}\geq q^{n-1}$. Thus
            $T$ is a similar copy of $F'$ with diameter exceeding $q^{n}$, hence it contains a ball of diameter $C^{-1}q^{n}$. 
    \end{itemize}

    We state that $\mathcal{T}_n$ is a BDDS which witnesses that $F$ has nice connection type. To this end, we verify that for any sequence $\Psi_{i_1}, ..., \Psi_{i_k}$, and their composition $\Phi$,
    we have that $\Phi(F')\cap F = \Phi(F)$. It is sufficient as $\Phi$ is a similarity and $F$ is connected, hence $\Phi(F')\cap F$ is connected as well. Thus every set in some $\mathcal{T}_n^0$ intersects
    $F$ so that the intersection has finitely many components, and in particular, the same holds for sets in $\mathcal{T}_n$. As $F'\supseteq F$, one of the containments is obvious, and it suffices
    to verify $\Phi(F')\cap F \subseteq \Phi(F)$.
    
    We apply induction on the composition length. For the empty composition, the claim is $F'\supseteq F$, which holds, since $F'$ is such that $\bigcup_{i=1}^{m}\Psi_i(F')\subseteq F'$. Concerning the
    inductive step, assume that the claim holds for composition length $k-1$, and let $\Phi = \Psi_{i_1}\circ ... \circ \Psi_{i_k}$. To simplify notation, 
    let $\Phi_0 = \Psi_{i_1}\circ ... \circ \Psi_{i_{k-1}}$. Due to the induction hypothesis,
    $$\Phi_0(F')\cap F = \Phi_0(F),$$
    and hence due to $\Psi_{i_k}(F')\subseteq F'$, we find
    $$\Phi(F')\cap F \subseteq \Phi_0(F) = \bigcup_{j=1}^{m} \Phi_0(\Psi_j(F))$$
    That is,
    \begin{equation} \label{eq:induction}
    \Phi(F')\cap F = \Phi(F')\cap \Phi_0(F).
    \end{equation}

    Notice that due to the choice of $V$, recalling notation \eqref{*neighb}
	we have that $F'\subseteq \{x: d(x, F)\leq d(x, H)\}$. Consequently, we have
    $$\Phi(F')\subseteq \{x: d(\Phi^{-1}(x), F)\leq d(\Phi^{-1}(x), H)\},$$
    which due to self-similarity, immediately yields 
    $$\Phi(F')\subseteq \{x: d(x, \Phi(F))\leq d(x, \Phi(H))\}.$$
    This set can be further increased by replacing $H$ with a smaller set. We utilize this observation by introducing $H_0 = \bigcup_{j\neq i_k}\Psi_{i_k}^{-1}\Psi_j(F)$, which yields
    $$\Phi(F')\subseteq \{x: d(x, \Phi(F))\leq d(x, \Phi(H_0))\} = \bigcap_{j\neq i_k}\{x: d(x, \Phi_0(\Psi_{i_k}(F)))\leq d(x, \Phi_0(\Psi_j(F)))\}.$$
    However, the rightmost set in this equality can contain $x\in \Phi_0(\Psi_{j}(F))$ for some $j\neq i_k$ only if $x\in \Phi_0(\Psi_{i_k}(F))$ as well. Consequently, 
    $$\Phi(F')\cap \Phi_0(F)\subseteq \Phi_0(\Psi_{i_k}(F)) = \Phi(F).$$
    Taking \eqref{eq:induction} into consideration, it concludes the proof.
\end{proof}


\begin{lemma}\label{lemma:upper_box_generic}
    Suppose that $0< \aaa\leq 1$,  $F\subset\R^p$ is compact with nice connection type, witnessed by $(\mathcal{T}_n)_{n=1}^{\infty}$.
    Moreover, assume that $E=F$ or $E = F \cap H$ for some $H\in \bigcup_{n=1}^{\infty}\mathcal{T}_n$, and $\cau\subset C_1^\alpha(F)$ is open.
    If $\{f_1,f_2,\ldots\}$ is a countable dense subset of $\cau$, then there is a dense $G_\delta$ subset $\cag$ of $\cau$ such that 
    \begin{equation}\label{eqdfb_upper_boxdim}
    \inf_{f\in\cag} D_{\overline{B}*}^f(E) \ge \inf_{k\in\N} D_{\overline{B}*}^{f_k}(E).
    \end{equation}
\end{lemma}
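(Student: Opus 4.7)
The plan is to follow the Baire-category architecture of the proof of Lemma~\ref{*lemdfb}, with inequalities flipped as dictated by the essential infimum defining $D_{\overline{B}*}^f$. Set $d^* := \inf_k D_{\overline{B}*}^{f_k}(E)$; the case $d^* = 0$ is trivial, so assume $d^* > 0$ (which forces $\lambda(f_k(E)) > 0$ for every $k$). Pick rationals $d'_j \nearrow d^*$. Since $D_{\overline{B}*}^f(E) \geq d^*$ is equivalent to $\lambda\{r \in f(E) : D_{\overline{B}}^f(r, E) \leq d'_j\} = 0$ for every $j$, it suffices to produce for each $j$ a dense $G_\delta$ set $\mathcal{G}_j \subseteq \mathcal{U}$ on which this equality holds, and to set $\mathcal{G} := \bigcap_j \mathcal{G}_j$.

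Fix $d' = d'_j$ and an auxiliary $d'' \in (d', d^*)$. Writing $a_n(X) := |\{T \in \mathcal{T}_n : T \cap X \neq \emptyset\}|$, introduce the finite-scale bad sets
\[
\Gamma_{N, M}(f) := \{r \in f(E) : a_n(f^{-1}(r) \cap E) < q^{-nd''}\ \text{for every}\ n \in [N, M]\}.
\]
These decrease in $M$ to $\Gamma_{N, \infty}(f)$, and the $d'$-bad set $\{r \in f(E) : D_{\overline{B}}^f(r, E) \leq d'\}$ is contained in $\bigcup_N \Gamma_{N, \infty}(f)$ (since $d' < d''$ forces $a_n < q^{-nd''}$ eventually whenever $D_{\overline{B}}^f(r, E) \leq d'$). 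Because $D_{\overline{B}*}^{f_k}(E) \geq d^* > d''$, we have $\lambda\{r \in f_k(E) : D_{\overline{B}}^{f_k}(r, E) \leq d''\} = 0$, so $\lambda(\Gamma_{N, \infty}(f_k)) = 0$, and hence $\lambda(\Gamma_{N, M}(f_k)) \to 0$ as $M \to \infty$ for every $N$. Setting
\[
\mathcal{W}_{N, m} := \{f \in \mathcal{U} : \exists\, M \geq N,\ \lambda(\Gamma_{N, M}(f)) < 1/m\}, \qquad \mathcal{G}_j := \bigcap_{N, m \in \mathbb{N}} \mathcal{W}_{N, m},
\]
each $\mathcal{W}_{N, m}$ is dense since $\{f_k\} \subseteq \mathcal{W}_{N, m}$. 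For $f \in \mathcal{G}_j$ the inclusion $\Gamma_{N, \infty}(f) \subseteq \Gamma_{N, M}(f)$ gives $\lambda(\Gamma_{N, \infty}(f)) < 1/m$ for every $m$, hence $\lambda(\Gamma_{N, \infty}(f)) = 0$ for every $N$; the $d'$-bad set is null and $D_{\overline{B}*}^f(E) \geq d'$. Intersecting over $j$ yields $D_{\overline{B}*}^f(E) \geq d^*$.

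The main obstacle is the openness of each $\mathcal{W}_{N, m}$, which reduces to the upper semicontinuity of $g \mapsto \lambda(\Gamma_{N, M}(g))$ at every $f_k$, for fixed $N \leq M$. This is where the nice connection type and the Hölder constraint combine. The $T$-cell image $I_T^g := g(T \cap F) = \bigcup_i g(C_{T, i})$ is a countable union of closed intervals, all lying in an interval of length $\leq \diam(T)^\alpha \leq (C q^n)^\alpha$, and for $\|g - f_k\|_\infty < \eta$ each endpoint of a sub-interval of $I_T^g$ differs from the corresponding endpoint of $I_T^{f_k}$ by at most $\eta$. Since $\Gamma_{N, M}(g)$ is a Boolean combination, over the finitely many scales $n \in [N, M]$, of the indicator sets $\{r : r \in I_T^g\}$ for $T \in \mathcal{T}_n$ meeting $E$, intersected with $g(E)$, the symmetric difference $\Gamma_{N, M}(g) \triangle \Gamma_{N, M}(f_k)$ is contained in the union of $\eta$-neighborhoods of the boundaries of the relevant $I_T^{f_k}$'s, plus $g(E) \triangle f_k(E)$. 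For the main application of connected OSC self-similar sets (Lemma~\ref{lemma:connected_is_nice_connected_self_similar}), each $T \cap F$ is already connected, so every $I_T^{f_k}$ is a single interval and the symmetric difference has measure $O(\eta)$, with the implied constant depending only on $N$, $M$, and the box-counts $a_n(E)$. The general countable-components case, where boundary contributions must be summed across infinitely many components and controlled by the per-component Hölder length bound $(Cq^n)^\alpha$, is the sharpest technical point of the argument; once it is resolved, the remainder of the proof is the standard Baire-category diagonal and presents no further difficulty.
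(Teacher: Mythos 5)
Your Baire-category scaffolding (finite-scale bad sets $\Gamma_{N,M}$, the sets $\mathcal{W}_{N,m}$, and the countable intersection) is a genuinely different route from the paper's. The paper instead fixes, for each $k$, $n$, and each $r$ in a full-measure set $R_0$, a single auxiliary scale $m_{n,k,r}$ at which the box-count ratio exceeds $D_1$, prescribes an explicit measurable radius $\rho_{k,n,r}>0$ so that perturbations within $\rho_{k,n,r}$ cannot lose any counted cell at that one scale, passes to a superlevel set $R_{k,n}$ of nearly full measure on which $\inf_r \rho_{k,n,r}>0$, builds the $G_\delta$ from unions of balls $\bigcup_k B(f_k,\rho_{k,n})$, and closes with Borel--Cantelli. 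That design makes the open sets open by fiat and concentrates all the measure-theoretic work in the superlevel-set step; yours concentrates it in the upper semicontinuity of $g\mapsto\lambda(\Gamma_{N,M}(g))$.

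That upper semicontinuity is where your argument has a genuine gap, and the specific reduction you state is wrong. Write $f_k(C_{T,i})=[a_i,b_i]$ for the component images. If $\|g-f_k\|_\infty<\eta$ then $g(C_{T,i})\supseteq (a_i+\eta,\,b_i-\eta)$, so the set of $r$ that can be lost from cell $T$ is $I_T^{f_k}\setminus\bigcup_i(a_i+\eta,\,b_i-\eta)$. This set is \emph{not} contained in the $\eta$-neighbourhood of $\partial I_T^{f_k}$: take $T\cap F$ with components whose $f_k$-images are tiny intervals $[q_i-\epsilon_i,\,q_i+\epsilon_i]$ around a dense sequence $q_i$ filling $[0,1]$; then $\partial I_T^{f_k}=\{0,1\}$, yet for fixed $\eta$ the lost set has measure close to $1$ once $\epsilon_i<\eta$ for most $i$. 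Likewise, ``controlling boundary contributions by the per-component H\"older length $(Cq^n)^\alpha$'' does not help: that bounds the length of each $[a_i,b_i]$, not their number, and a naive sum of $\eta$-neighbourhoods of countably many endpoints is unbounded. What actually works is continuity from above: the sets $I_T^{f_k}\setminus\bigcup_i(a_i+\eta,\,b_i-\eta)$ decrease as $\eta\downarrow0$ to $I_T^{f_k}\setminus\bigcup_i(a_i,b_i)\subseteq\bigcup_i\{a_i,b_i\}$, a countable (hence null) set, and since $I_T^{f_k}$ has finite measure the measure of the lost set tends to $0$. Summing over the finitely many $T\in\mathcal{T}_n$ meeting $F$ for $n\in[N,M]$, and handling $g(E)\setminus f(E)$ via $g(E)\subseteq(f(E))_\eta$ together with compactness of $f(E)$, yields the required semicontinuity. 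One more structural point: as written, $\mathcal{W}_{N,m}$ need not be open, so you should replace it by $\bigcup_k B(f_k,\delta_{k,N,m})\cap\mathcal{U}$ for radii supplied by the semicontinuity at each $f_k$ (exactly the device the paper uses), rather than rely on ``the standard diagonal.'' Once these points are repaired, your route is correct.
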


\begin{proof}[Proof of Lemma \ref{lemma:upper_box_generic}]
    We can assume $F=E$, as any other permitted $E$ is a compact set with nice connection type.

    Since countable union of sets of measure zero is still of measure zero we can choose
    a measurable set $R_{0}\sse \R$ such that $\lll(\R\sm R_{0})=0$ and  for any $k$ 
    \begin{equation}\label{*lrdfkr}
    \dub^{f_{k}}(r,E)\ge  \inf_{k'\in\N} \dub^{f_{k'}}(E) \text{ for any $r\in R_{0}$.}
    \end{equation}
By our assumption about  nice connection type one can choose countably many components of the form $H^j$ of $H\cap F$ for all $H\in \bigcup_{n=1}^{\infty}\mathcal{T}_n$. Hence we
    can also assume that $R_0$  does not contain
     $\min_{H^j}f_k$ and $\max_{H^j}f_k$ for any $k$ and 
component $H^{j}$ of the above form.    
    
    Suppose that $D_1<\inf_{k\in\N} \dub^{f_k}(E)$, and fix $k\in \N$ and $r\in R_{0}$.
    For every $n\in\N$ we can choose an $m_{n,k,r} \in \N\cap [n,\infty)$ such that $\frac{\log{a_{m_{n,k,r}}( f_k^{-1}(r)\cap E)
    }}{-m_{n,k,r}\log q}>D_1$,  where $q$ is the base of $(\mathcal{T}_n)_{n=1}^{\infty}$. 
	
    Next we suppose that apart from $r$ and $k$,  $n$ is also fixed.

  Since $r\in R_0$ is fixed, there is a radius $\rrr_{k,n,r}>0$ such that for any $g \in B(f_k, \rrr_{k, n, r})$, we have that
    $H\in A_{m_{n, k, r}}(f_k^{-1}(r)\cap E)$ yields $H\in A_{m_{n, k, r}}(g^{-1}(r)\cap E)$. 
    Indeed, if $H\in A_{m_{n, k, r}}(f_k^{-1}(r)\cap E)$, we can find a component $H^j$ such that $r \in \inte f_k(H^j)$, thus there exists
    a radius $\rrr_{k,n,r,H}>0$ such that $H\in A_{m_{n, k, r}}(g^{-1}(r)\cap E)$ for $g\in B(f_k, \rrr_{k, n, r,H})$.
    As the set 
    $A_{m_{n, k, r}}(f_k^{-1}(r)\cap E)$ is finite, $\rrr_{k, n, r} = \min_{H\in A_{m_{n, k, r}}(f_k^{-1}(r)\cap E)} \rrr_{k,n,r,H}$
    is a satisfying choice. Hence $a_{m_{n, k, r}}(f_k^{-1}(r)\cap E)\leq a_{m_{n, k, r}}(g^{-1}(r)\cap E)$.

    We presribe these radii explicitly to assure measurability. That is, let
    $$\rrr_{k,n,r,H} = \frac{1}{2}\sup_{j:r \in \inte f_k(H^j)}\min \{|r- \max_{x\in H^j} f_k(x)|, |r- \min_{x\in H^j} f_k(x)|, 1\},$$
    which is a continuous function of $r$ for $r\in f_k(H\cap F)\cap R_0$, as the innermost functions are continuous. One should note that these domains are not necessarily connected.
    However, they are measurable, thus these functions can be
    extended measurably to $f_k(F) \cap R_0$ by putting 1 where they are not defined yet. (As $f_k$ is continuous, $f_k(H\cap F)$ is the union of countably many intervals, hence measurable.)
    Then the definition of $\rrr_{k, n, r}$ becomes valid with $\rrr_{k, n, r} = \min_{H}\rrr_{k,n,r,H}$, which is
    a measurable function as the minimum of a countable set of measurable functions.

    By taking a suitable superlevel set of $\rrr_{k, n, r}$, we get a measurable set $R_{k, n}$ such that $\lambda(R_{k,n})>\lambda(f_k(F))-2^{-n}$, and 
    $\rrr_{k, n} :=\inf_{r\in R_{k, n}}\rrr_{k, n, r}>0$. By further reducing $\rrr_{k, n}$, we can assume that 
    for any $f \in B(f_k, \rrr_{k, n})$, we have that 
    $$\lambda(f(F) \setminus f_k(F)) < 2^{-n},$$
    and hence
    $$\lambda(f(F) \setminus R_{k,n}) < 2^{-(n-1)}.$$
    Moreover, for any $r\in R_{k,n}$ we have
    $$\frac{\log{a_{m_{n,k,r}}(f^{-1}(r)\cap E)
    }}{-m_{n,k,r}\log q}\geq \frac{\log{a_{m_{n,k,r}}(f_k^{-1}(r)\cap E)
    }}{-m_{n,k,r}\log q} \geq D_1.$$
    
    Now let $\cag_{n}=\bigcup_{k}B(f_{k},\rrr_{k,n})\cap \cau$, and
    $\cag =\bigcap_{n}\cag_{n}$.

    Suppose $f\in \cag.$ 
    Then there exists a sequence $k_{n}$ such that $f\in B(f_{k_{n}},\rrr_{k_{n},n})$ for every $n$. Consequently, 
    $$\lambda(f(F) \setminus R_{{k_n}, n}) < 2^{-(n-1)}.$$
    Thus by the Borel--Cantelli lemma, almost every $r\in f(F)$ is contained in infinitely many $R_{{k_n}, n}$. That is, 
    $$\frac{\log{a_{m_{n,k,r}}(f^{-1}(r)\cap E)}}{-m_{n,k,r}\log q}\geq D_1$$
    for infinitely many $m_{n,k,r}$. Hence $\dub^{f}(r,F)\geq D_1$.
    As $D_1<\inf_{k\in\N} \dub^{f_k}(E\cap F)$ was chosen arbitrarily, the lemma is proven.
    
\end{proof}

This lemma can be used to prove the following theorem:


\begin{theorem} \label{thm:upper_box_generic}
    Suppose that $0< \aaa\leq 1$,  $F\subset\R^p$ is compact with nice connection type, witnessed by $(\mathcal{T}_n)_{n=1}^{\infty}$.
    Then there is a dense $G_\delta$ subset $\cag$ of $C_1^\alpha(F)$ 
    such that for every $f\in\cag$ we have $D_{\overline{B}*}^f(F) = D_{\overline{B}*}(\alpha,F)$.   
\end{theorem}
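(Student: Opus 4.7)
The proof plan mirrors that of Theorem~\ref{*thmgenex}, with Lemma~\ref{lemma:upper_box_generic} playing the role of Lemma~\ref{*lemdfb}. I will construct the target $\cag$ as the intersection $\cag_0 \cap \cag'$ of two dense $G_\delta$ subsets of $C_1^\alpha(F)$, certifying respectively the inequalities $D_{\overline{B}*}^f(F) \le D_{\overline{B}*}(\alpha,F)$ and $D_{\overline{B}*}^f(F) \ge D_{\overline{B}*}(\alpha,F)$.

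For $\cag_0$, the infimum structure in $D_{\overline{B}*}(\alpha,F) = \inf_{\cag \in \mg_{1,\alpha}} \sup_{f\in\cag} D_{\overline{B}*}^f(F)$ supplies, for each $n \in \N$, a dense $G_\delta$ set $\cag_n \in \mg_{1,\alpha}$ with $\sup_{f\in\cag_n} D_{\overline{B}*}^f(F) \le D_{\overline{B}*}(\alpha,F) + 1/n$; the intersection $\cag_0 := \bigcap_n \cag_n$ then forces $D_{\overline{B}*}^f(F) \le D_{\overline{B}*}(\alpha,F)$ for every $f \in \cag_0$.

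For $\cag'$, I plan to invoke Lemma~\ref{lemma:upper_box_generic} with $\cau = C_1^\alpha(F)$ and $E = F$. For each $n$, the idea is to select a countable dense subset $\{f_k^n\}_{k\in\N}$ of $C_1^\alpha(F)$ whose members all satisfy $D_{\overline{B}*}^{f_k^n}(F) \ge D_{\overline{B}*}(\alpha,F) - 1/n$; the lemma then produces a dense $G_\delta$ set $\cag_n'$ with $\inf_{f\in\cag_n'} D_{\overline{B}*}^f(F) \ge D_{\overline{B}*}(\alpha,F) - 1/n$, and $\cag' := \bigcap_n \cag_n'$ gives the matching lower bound generically.

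The main obstacle, and the only step requiring real work, is producing the dense subsets $\{f_k^n\}$; equivalently, I must verify that for every $d < D_{\overline{B}*}(\alpha,F)$ the set $\{f \in C_1^\alpha(F) : D_{\overline{B}*}^f(F) \ge d\}$ is dense in $C_1^\alpha(F)$. The route I would take is by contradiction: if some open ball $\cau \subset C_1^\alpha(F)$ satisfied $D_{\overline{B}*}^f(F) < d$ for every $f \in \cau$, then writing $\cag_0 = \bigcap_n U_n$ with open $U_n$, the set $\tilde\cag := \bigcap_n (\cau \cup (U_n \setminus \overline{\cau})) = \cau \cup (\cag_0 \setminus \overline{\cau})$ is a dense $G_\delta$ of $C_1^\alpha(F)$. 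Combining the pointwise bound $D_{\overline{B}*}^f(F) \le d$ on the $\cau$-piece with a refined upper bound on the complementary piece $\cag_0 \setminus \overline{\cau}$---obtained by iterating the construction of $\cag_0$ inside the open subspace $C_1^\alpha(F) \setminus \overline{\cau}$ and exploiting the BDDS structure granted by the nice connection type hypothesis---should force $\sup_{f\in\tilde\cag} D_{\overline{B}*}^f(F) < D_{\overline{B}*}(\alpha,F)$, contradicting the infimum in the definition of $D_{\overline{B}*}(\alpha,F)$.
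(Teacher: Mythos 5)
Your decomposition into an easy upper-bound part ($\cag_0$, via the $\inf$ in the definition) and a harder lower-bound part is the same as the paper's, and the identification of the crucial claim---that for every $d<\dub(\alpha,F)$ one should produce a dense family of $f$ with $\dub^f(F)\ge d$---is the right starting thought. However, the contradiction argument you sketch does not close, and the paper's actual route differs in a way that matters.

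The problem with the sketch: from the assumption that $\dub^f(F)<d$ on a ball $\cau$ you build the dense $G_\delta$ set $\tilde\cag=\cau\cup(\cag_0\setminus\overline\cau)$, but this only gives $\sup_{f\in\tilde\cag}\dub^f(F)\le\max(d,D_0)=D_0=\dub(\alpha,F)$, which is not a contradiction with the infimum. To contradict the infimum you would need the \emph{strict} inequality $\sup_{f\in\tilde\cag}\dub^f(F)<D_0$, and nothing forces the functions in $\cag_0\setminus\overline\cau$ to have essentially smaller level sets than $D_0$; ``iterating the construction inside the open complement'' only reproduces the bound $\le D_0$ there. So the claimed global density of $\{f:\dub^f(F)\ge d\}$ is not established by this route, and in fact the paper never proves it directly.

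What the paper does instead is weaker and local. By the same category argument you begin with, it shows only that the set $G_k=\{f\in\cag_0:\dub^f(F)\ge D_k\}$ is somewhere dense, i.e.\ dense in some ball $B(f_1,\delta_1)$. The crux is then a translation/truncation trick exploiting the H\"older constraint: one covers $F$ by finitely many pieces $H_j\in\bigcup_n\mathcal T_n$ of diameter $\ll\delta_1$; since every $f\in C_1^\alpha(F)$ has small oscillation on $E=H_j\cap F$, an arbitrary $g_0$ can be cut off so that its restriction to $E$ is, up to an additive constant, the restriction of a function $g_1\in B(f_1,\delta_1)$, which can then be approximated by some $g_2\in G_k$ and cut back to a $g_3$ close to $g_0$ with $g_3|_E=g_2|_E+\text{const}$. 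This shows $\{f:\dub^f(H_j\cap F)\ge D_k\}$ is dense in \emph{all} of $C_1^\alpha(F)$, and Lemma~\ref{lemma:upper_box_generic} is then applied with $E=H_j\cap F$ (not $E=F$, as you propose), once for each $j$. Intersecting over $j$ and noting that $\dub^f(F)\ge\min_j\dub^f(H_j\cap F)$ (since a finite union makes the upper box dimension of $f^{-1}(r)$ equal the max over pieces) gives the desired $\cag_k$. This localization step, together with the cut-off manipulations with $g_1,g_2,g_3$, is the core idea missing from your proposal, and it is also where the nice connection type and the BDDS are actually used (to supply small covering pieces $H_j$ to which Lemma~\ref{lemma:upper_box_generic} applies).
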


\begin{proof}
    Let $D_0 := \dub(\alpha,F)$.

    For every $k\in\N$ choose a $\cag^k\in\mg_{1,\alpha}(F)$ for which $D_0+\frac1k\ge \sup_{f\in\cag^k}\dub^f(F)$. 
    Set $\cag_0=\bigcap_{k=1}^\infty \cag^k$. 
    We have that $\cag_0\in\mg_{1,\alpha}(F)$ and $D_0\ge \sup_{f\in\cag}\dub^f(F)$.

    It is enough to prove that for every $k\in\N$ there is a $\cag_k\in\mg_{1,\alpha}$ such that 
    \begin{equation}\label{dlb^f kicsi}
    \inf_{f\in\cag_k}\dub^f(F) \ge D_k :=D_0-\frac1k ,
    \end{equation}
    since then $\cag:=\bigcap_{k=0}^\infty \cag_k$ is a proper choice.
    
    Fix $k\in\N$. 

    The set $G_{k} := \{f\in\cag_0 : \dub^f(F)\ge D_k\}$ cannot be nowhere dense in $C_1^\alpha(F)$, since otherwise $\cag':=\cag_0\setminus \cl(G_{k})$ would be in $\mg_{1,\alpha}(F)$ and it would hold that 
    $$
    \sup_{f\in\cag'}\dub^f(F)  \le D_k < D_0 = \dub(\alpha,F),
    $$ 
    which contradicts the definition of $\dub(\alpha,F)$.
    Hence we can take $f_1\in C_1^\alpha(F)$ and $\delta_1>0$ such that $G_{k}$ is dense in $B(f_1,\delta_1)\cap C_1^\alpha(F)$.
    Choose a $\delta_2>0$ to satisfy $\delta_2^\alpha \le  {\delta_1}/{64}$.
    By definition  and the local finiteness of each $\mathcal{T}_n$, 
    we can take $H_1, ..., H_m \in \bigcup_{n=1}^{\infty}\mathcal{T}_n$ such that $\diam H_j < \delta_2$ for any $j$,
    and $\bigcup_{j=1}^{m}H_j \supseteq F$.
    
    Suppose that $j$ is fixed, $\eps>0$ and $g_0\in C_1^\alpha(F)$ is an arbitrary function.
    Let $E:=H_j\cap F$.
    By the H\"older property, for every $f\in C_1^\alpha(F)$
    $$
    \diam\big(f(E)\big)\le (2\delta_2)^\alpha \le \frac{\delta_1}{32}.
    $$
    Thus setting 
    $$
    g_1(x) := \min\big\{\max\{g_0(x)-g_0(a)+f_1(a),f_1(x)-\delta_1/2\},f_1(x)+\delta_1/2\big\}.
    $$
    we obtain 
    \begin{equation}\label{g_1}
    g_1|_E=g_0|_E-g_0(a)+f_1(a)
    \end{equation}
    (since $g_1(a)=f_1(a)$ and $\diam\big(g_0(E)\big)+\diam\big(f_1(E)\big)\le  {\delta_1}/{16}$).
    As $g_1\in B(f_1,\delta_1)$, 
     we can take $g_2\in G_k$ such that $\big|\big|g_1|_E-g_2|_E\big|\big|<\eps/100$.
    Set 
    $$
    g_3(x) := \min\big\{\max\{g_2(x)-g_2(a)+g_0(a),g_0(x)-\eps\},g_0(x)+\eps\big\}.
    $$
    Obviously $\big|\big|g_3-g_0\big|\big|\le\eps$.
    By \eqref{g_1}  and by the  definition of $g_2$, for every $x\in  E$
    \begin{equation}\label{g_1_cau-ban}
    \begin{gathered}
    \big|g_2(x)-g_2(a)+g_0(a)-g_0(x)\big| \\
    \le \big|g_2(x)-g_1(x)\big| + \big|g_1(a)-g_2(a)\big| + \big|g_0(a)-g_1(a) + g_1(x)-g_0(x)\big| \\
    \le \eps/100+\eps/100+0 <\eps,
    \end{gathered}
    \end{equation}
    hence $g_3(x) = g_2(x)-g_2(a)+g_0(a)$ for every $x\in  E$.
    Thus $\dub^{g_3}( E) = \dub^{g_2}( E) \ge D_k$ since $g_2\in G_k$.
    
    To sum up, for every $g_0\in C_{1}^{\alpha}(F)$, $1\leq j \leq m$ and $\eps>0$ we can find a $g_3\in C_{1}^{\alpha}(F)$ such that $||g_0-g_3||\le\eps$ and $\dub^{g_3}\big(H_j\cap F\big) \ge D_k$.
    Consequently, by Lemma \ref{*lemdfb} for every $1\leq j \leq m$ there is a $\cag_j^k\in\mg_{1,\alpha}$ satisfying 
    $$\sup_{f\in\cag_j^k} \dub^f(H_j \cap F) \ge D_k.$$
    Then \eqref{dlb^f kicsi} is true for $\cag^k := \bigcap_{j=1}^{m} \cag_j^k$, which completes the proof.
\end{proof}

This theorem has exciting consequences, as it can be the starting point of an industry. Notably, due to the fact that a dense set of
functions can be used to give a lower bound for $D_{\overline{B}*}(\alpha, F)$, while it yields an upper bound for
$D_{\underline{B}*}(\alpha, F)$ or $D_{*}(\alpha, F)$, dense sets of functions which seemed to be the most useless previously for estimating
$D_{*}(\alpha, F)$ from above, give the best lower estimates for $D_{\overline{B}*}(\alpha, F)$, given that the level sets for this
dense set of functions satisfy that they have box dimension equal to their Hausdorff dimension. As already noted, experience shows that 
standard, conveniently definable constructions tend to have this favorable property.
In our study of the Sierpiński triangle below, we will
showcase this industry. 

It is an interesting question whether one can drop or weaken the condition on having nice connection type in Lemma \ref{lemma:upper_box_generic} 
and subsequently, in Theorem \ref{thm:upper_box_generic}. We have yet to give a complete answer to this question, but sketch an example 
which demonstrates that this condition cannot be dropped entirely. Notably, consider a self-similar set $F$ determined by certain subsquares
of side length $2^{-n}$ in $[0, 1]^2$, so that it satisfies the strong separation condition and every row contains at least two subsquares.
(For $n\geq 3$, it is possible.) Then one can give a dense system of functions $\{f_1, f_2, ...\}$ in $C_1^{\alpha}(F)$ for $0<\alpha<1$
such that each $f_i$ is
piecewise constant. This gives rise to a dense
 $G_\delta$ set $\mathcal{G}_1$ of functions such that for any $f\in \mathcal{G}_1$, $\lambda(f(F))=0$,
hence even $D_{\overline{B}*}(\alpha, F)=0$. However, a slight perturbation of $\{f_1, f_2, ...\}$ results in a dense system of functions
$\{g_1, g_2, ...\}$ in $C_1^{\alpha}(F)$, which is not piecewise constant, but consists of "tilted" pieces, that is the pieces of
$g_j$ are of the form $g_j(x, y)=c_j y + d_j$. Due to the definition of $F$, the image of $g_j$ has positive measure, and
every nonempty level set has dimension at least $\frac{1}{n}$. This dense system of functions shows that Lemma \ref{lemma:upper_box_generic}
fails for $F$.

 Now we prove a simple lemma which can be used to give $\alpha$-dependent upper bounds on $D_{\overline{B}*}$. 

\begin{lemma} \label{lemma:from_covering_to_box}
    Assume that $0<\alpha\leq 1$, and $F\subseteq \mathbb{R}^p$ has coverings $(\mathcal{S}_n)_{n=1}^{\infty}$, 
    satisfying the following properties
    for some constants $C,l,\rho>1$:
    \begin{itemize}
        \item the cardinality of $\mathcal{S}_n$ is at most $Cl^n$ for some $C,l>1$,
        \item if $S\in \mathcal{S}_n$, then $\diam(S)\leq C\rho^{-n}$.
    \end{itemize}
    Then for any $f\in C_{\alpha}^{1}(F)$ and almost every $r\in \mathbb{R}$, we have
    $\overline{\dim}_B(f^{-1}(r))\leq \frac{\log l}{\log \rho} - \alpha$. In particular,
    $D_{\overline{B}*}(\alpha, F)\leq \frac{\log l}{\log \rho} - \alpha$.
\end{lemma}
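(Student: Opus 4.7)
The plan is to count, for each $n$, how many sets of $\mathcal{S}_n$ actually meet a given level set $f^{-1}(r)$, and to show that for almost every $r$ this count grows no faster than the claimed rate. Define
\[
N_n(r) = \#\{S \in \mathcal{S}_n : S \cap f^{-1}(r)\neq\emptyset\} = \#\{S \in \mathcal{S}_n : r \in f(S)\}.
\]
Integrate in $r$ and use Fubini: $\int_{\mathbb{R}} N_n(r)\,dr = \sum_{S\in\mathcal{S}_n}\lambda(f(S))$. By the $1$-H\"older-$\alpha$ property, $\lambda(f(S)) \le \operatorname{diam}(f(S)) \le \operatorname{diam}(S)^\alpha \le C^\alpha \rho^{-n\alpha}$, so combined with $|\mathcal{S}_n|\le Cl^n$ we obtain $\int N_n(r)\,dr \le C^{1+\alpha}\,l^n \rho^{-n\alpha}$.

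Next I would apply a Markov-plus-Borel--Cantelli step. Fix $\varepsilon>0$ and set $R_n^{\varepsilon} = \{r : N_n(r) > l^n \rho^{-n\alpha}\rho^{n\varepsilon}\}$. Then $\lambda(R_n^{\varepsilon}) \le C^{1+\alpha}\rho^{-n\varepsilon}$, which is summable; hence for a.e.\ $r$, $r\notin R_n^{\varepsilon}$ for all but finitely many $n$. Taking a countable sequence $\varepsilon_k\downarrow 0$ and intersecting, there is a full-measure set of $r$ for which, for each $k$, $N_n(r) \le l^n\rho^{-n\alpha}\rho^{n\varepsilon_k}$ for all large $n$.

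Now I would translate this into an upper box dimension bound. Since $\mathcal{S}_n$ is a cover of $F$ whose members meeting $f^{-1}(r)$ number at most $N_n(r)$, and each has diameter at most $C\rho^{-n}$, the minimal number $N(\delta,f^{-1}(r))$ of sets of diameter $\delta$ needed to cover $f^{-1}(r)$ satisfies $N(C\rho^{-n},f^{-1}(r))\le N_n(r)$. For arbitrary $\delta>0$ choose $n$ with $C\rho^{-(n+1)}<\delta\le C\rho^{-n}$ and estimate
\[
\frac{\log N(\delta,f^{-1}(r))}{-\log\delta}\;\le\;\frac{(n+1)\log l - (n+1)\alpha\log\rho + (n+1)\varepsilon_k\log\rho + O(1)}{n\log\rho - \log C},
\]
whose $\limsup$ as $\delta\to 0$ is $\tfrac{\log l}{\log\rho}-\alpha+\varepsilon_k$. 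Letting $k\to\infty$ yields $\overline{\dim}_B(f^{-1}(r)) \le \tfrac{\log l}{\log\rho}-\alpha$ for a.e.\ $r$. The ``in particular'' clause is then immediate: if $\lambda(f(F))=0$ then $D_{\overline{B}*}^{f}(F)=0$ by definition; otherwise the almost-every statement forces $D_{\overline{B}*}^{f}(F)\le \tfrac{\log l}{\log\rho}-\alpha$, and since this bound holds for \emph{every} $f\in C_1^\alpha(F)$, it trivially passes through the $\sup$--$\inf$ defining $D_{\overline{B}*}(\alpha,F)$.

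The only mildly delicate point is the passage from the discrete scales $\rho^{-n}$ to the continuous parameter $\delta$; this is handled by the one-step shift from $n$ to $n+1$, and no geometric subtlety about the sets in $\mathcal{S}_n$ is required beyond the two diameter/cardinality bounds. No step looks likely to be a serious obstacle: the Fubini estimate replaces any combinatorial counting, and the Borel--Cantelli argument is the standard device for promoting an integrated inequality to an almost-everywhere one.
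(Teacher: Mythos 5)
Your proof is correct and follows essentially the same route as the paper's: both estimate $\int N_n(r)\,dr=\sum_{S\in\mathcal{S}_n}\lambda(f(S))\le C^{1+\alpha}l^n\rho^{-n\alpha}$ via Fubini and the H\"older bound, then apply Markov's inequality together with Borel--Cantelli to conclude that for almost every $r$ the count $N_n(r)$ eventually stays below the threshold corresponding to $d>\tfrac{\log l}{\log\rho}-\alpha$. Your write-up is slightly more explicit in a couple of places the paper leaves implicit (the countable intersection over $\varepsilon_k\downarrow 0$, the discrete-to-continuous scale comparison, and the ``in particular'' clause distinguishing $\lambda(f(F))=0$), but the underlying argument is identical.
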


\begin{proof}
    Fix $f\in C_{\alpha}^{1}(F)$. Moreover,
    fix $d>0$, and let $r\in R_n$ if $f^{-1}(r)$ intersects at least $\rho^{nd}$ elements of $\mathcal{S}_n$.
    Applying Fubini's theorem to the set $\{(T, r): f^{-1}(r)\cap T \neq \emptyset\}$ in the product space $\tau_n \times \mathbb{R}$,
    where the counting measure is considered on $\mathcal{S}_n$, we obtain
    $$\lambda(R_n) \rho^{nd} \leq C^{1+\alpha} l^n \rho^{-n\alpha},$$
    due to $\mathcal{S}_n$ having $l^n$ elements with diameter at most $C\rho^{-n}$, each of them having image of diameter at most 
    $C^{\alpha}\rho^{-n\alpha}$.
    Consequently,
    $$\lambda(R_n) \leq C^{1+\alpha}\left(\rho^{\frac{\log l}{\log \rho} - d -\alpha}\right)^n=C^{1+\alpha}c^n$$
    for some $c>0$. If $c<1$, then $\sum_{n=1}^{\infty} \lambda(R_n)$ is
    convergent, hence due to the Borel--Cantelli lemma almost every $r$ is contained by only finitely many $R_n$. 
    Consequently, using the cover provided by $\mathcal{S}_n$, we find
    $$\overline{\dim}_B(f^{-1}(r)) \leq \frac{\log \rho^{nd}}{\log C\rho^{n}}=d$$
    for almost every $r$. However, $c<1$ is equivalent to $d> \frac{\log l}{\log \rho} - \alpha$, thus we eventually find
    $$\overline{\dim}_B(f^{-1}(r)) \leq \frac{\log l}{\log \rho} - \alpha$$
    for almost every $r$.
\end{proof}

This theorem can be used naturally for self-similar sets, we will apply it in Section \ref{sec:sier_triangle_box} 
for the Sierpi\'nski triangle. For another example, it also yields the upper bound $D_{\overline{B}*}(\alpha, [0,1]^p)=p-\alpha$.

Our next goal is to provide a general lower bound of $D_{\overline{B}*}(\alpha, F)$. The intuition is the following: 
According to Lemma \ref{lemma:piecewise_affine_approx}, for $F$ compact and $0<\alpha<1$, if $0<c$ is fixed, then locally non-constant piecewise
affine $c^{-}$-Hölder-$\alpha$ functions form a dense subset of $c$-Hölder-$\alpha$ functions.
We would like to apply Theorem \ref{thm:upper_box_generic} to this dense set, thus we will require that $F$ has nice connection type.
If $f$ is a piecewise affine function, its level sets can be written as the intersection of $F$ with pieces of hyperplanes.
However, according to Theorem \ref{thm:slicing_thm}, typically, positively many such intersections have dimension $\dim_H F -1$, which hints that $D_{\overline{B}*}(\alpha, F) \geq \dim_H F -1$.
To make this argument precise, we have to deal with the following difficulties: on one hand, we need to ensure that we actually consider "good" hyperplanes, not being exceptional in Theorem \ref{thm:slicing_thm} 
applied to $F\cap S$. It can be done quite generally with a perturbation argument to be described in the proof of Theorem \ref{thm:gen_lower_bound_box}.
On the other hand, we need that almost every intersection has dimension $\dim_H F -1$, not merely of positive measure. As we already discussed in Section \ref{sec:prelimin}, 
this requires some homogeneity property of $F$. It seems to be a difficult question what is the weakest condition one has to impose. We will rely on Proposition \ref{prop:wenxi} (and assume its conditions)
to prove our theorem, providing a somewhat general lower bound on $D_{\overline{B}*}(\alpha, F)$.

\begin{theorem} \label{thm:gen_lower_bound_box}
    Assume that $F\subseteq \mathbb{R}^p$ is a nicely connected self-similar set with finitely many directions such that $\dim_H F = s$.
    Then $0<\alpha<1$ implies $D_{\overline{B}*}(\alpha, F) \geq \max(0, s-1)$.
    In particular, this bound holds for connected OSC self-similar sets with finitely many directions.
\end{theorem}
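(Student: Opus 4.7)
The case $s\le 1$ is immediate: $D_{\overline{B}*}(\alpha,F)\ge 0$ by definition, since the upper box dimension of any nonempty set is nonnegative. Henceforth assume $s>1$. The overall plan is to apply Theorem~\ref{thm:upper_box_generic} and Lemma~\ref{lemma:upper_box_generic} together: if I can produce a countable dense set $\{f_k\}\subseteq C_1^\alpha(F)$ with $D_{\overline{B}*}^{f_k}(F)\ge s-1$ for all $k$, then Lemma~\ref{lemma:upper_box_generic} yields a dense $G_\delta$ set $\cag'$ on which $D_{\overline{B}*}^f(F)\ge s-1$, and intersecting $\cag'$ with the dense $G_\delta$ of Theorem~\ref{thm:upper_box_generic} forces $D_{\overline{B}*}(\alpha,F)\ge s-1$.

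To construct each $f_k$, I would start from Lemma~\ref{lemma:piecewise_affine_approx} and refine the simplicial decomposition so that the affine pieces are aligned with the BDDS $(\mathcal T_n)$ of $F$ built in the proof of Lemma~\ref{lemma:connected_is_nice_connected_self_similar}. The identity $\Psi_{\mathbf i}(F')\cap F=\Psi_{\mathbf i}(F)$ proved there guarantees that every resulting piece $S_j$ satisfies $S_j\cap F=\Psi_{\mathbf i_j}(F)$, a similar copy of $F$ with dimension $s$ and finitely many directions. Writing $f|_{S_j\cap F}(x)=\langle v_j,x\rangle+b_j$, I would then perturb each direction $v_j$ into a suitable full-measure set. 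The finitely-many-directions assumption means the orthogonal parts of the iterated maps generate a finite subgroup $\{R_1,\dots,R_m\}\subseteq O(p)$, and Proposition~\ref{prop:wenxi} applied to $F$ with $m=1$ shows that
\[
G=\{v\in S^{p-1}:\ \lambda(Pr_v(F)\setminus\{a:\overline{\dim}_B(F\cap\{\langle v,\cdot\rangle=a\})=s-1\})=0\}
\]
has full measure in $S^{p-1}$; hence so does $G^*:=\bigcap_{\ell=1}^m R_\ell(G)$, which is therefore dense. I would perturb each $v_j$ into $G^*$, absorbing the perturbation within the $c^-$-H\"older slack of Lemma~\ref{lemma:piecewise_affine_approx} and tuning the constants $b_j$ to restore continuity across shared cell boundaries.

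For each piece, $R_{\mathbf i_j}\in\{R_1,\dots,R_m\}$ and $v_j\in G^*\subseteq R_{\mathbf i_j}(G)$, so $R_{\mathbf i_j}^{-1}v_j\in G$. Applying Proposition~\ref{prop:wenxi} to $\Psi_{\mathbf i_j}(F)$ in direction $v_j$ yields, for $\lambda$-a.e.\ $a\in Pr_{v_j}(\Psi_{\mathbf i_j}(F))$,
\[
\overline{\dim}_B\bigl(\Psi_{\mathbf i_j}(F)\cap\{x:\langle v_j,x\rangle=a\}\bigr)=s-1.
\]
Translating by $r=a+b_j$ gives $\overline{\dim}_B(f^{-1}(r))\ge s-1$ for $\lambda$-a.e.\ $r\in f(S_j\cap F)$. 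Because $f(F)=\bigcup_j f(S_j\cap F)$ is a finite union, the inequality persists for $\lambda$-a.e.\ $r\in f(F)$, so $D_{\overline{B}*}^{f_k}(F)\ge s-1$, completing the construction.

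The main obstacle will be the density and compatibility step: I need that piecewise affine functions whose simplicial pieces align with the BDDS cells (each cell possibly being triangulated into several simplices carrying a common affine expression) form a dense subset of $C_1^\alpha(F)$, and that the direction perturbations above can be arranged without breaking continuity at shared cell boundaries or exceeding the global H\"older constant~$1$. I expect this to follow by adapting the argument of Lemma~\ref{lemma:piecewise_affine_approx} from \cite{sier}: the geometric shrinkage of BDDS cells together with the $c^-$-H\"older slack leaves ample room for the refinement and the small perturbations, while continuity across cell interfaces (where neighboring pieces may carry distinct affine formulas after perturbation) can be enforced by successively tuning the additive constants $b_j$ and, if necessary, inserting thin transitional layers between cells. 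This boundary bookkeeping is where the bulk of the technical work will actually sit.
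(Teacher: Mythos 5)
Your overall skeleton agrees with the paper's: reduce to finding a dense family in $C_1^\alpha(F)$ whose level sets have dimension at least $s-1$ a.e., start from Lemma \ref{lemma:piecewise_affine_approx}, and feed the slicing result Proposition \ref{prop:wenxi} into Theorem \ref{thm:upper_box_generic}. But the crucial perturbation step is done quite differently, and your version has a genuine gap exactly where you flag "the bulk of the technical work."

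The paper never tries to align the affine pieces with the BDDS cells, and — more importantly — it never perturbs the directions $v_j$ piece by piece. Instead, after using Theorem \ref{thm:Grunb} to extend a piecewise affine $f$ to $\mathbb{R}^p$ and translating by a small $c$ so that every simplex becomes non-boundary for $F$, it perturbs \emph{globally}, replacing $f$ by $\tilde f(x)=f(x)+\langle t,x\rangle$ for a single small $t\in\mathbb{R}^p$. This simultaneously tilts every piece's gradient by the same vector, so continuity across shared faces and the $1$-H\"older bound are preserved automatically; one only needs $t$ to avoid, for each simplex $S$, the pullback (a Lebesgue null set by a Fubini argument) of the null set of bad hyperplane directions from Proposition \ref{prop:wenxi}. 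Since $\mathcal S$ is finite, a good small $t$ exists, and the whole continuity/compatibility problem you worry about never arises.

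Your plan of perturbing each $v_j$ independently into a full-measure set $G^*$ and "tuning the constants $b_j$ to restore continuity" cannot work as stated. If two adjacent pieces $S_1,S_2$ share a face $F_{12}$ and carry affine expressions $\langle v_1,x\rangle+b_1$ and $\langle v_2,x\rangle+b_2$, continuity on $F_{12}$ forces $v_1-v_2\perp F_{12}$; this is a codimension constraint on the pair $(v_1,v_2)$, and independently steering each $v_j$ into a full-measure subset of the sphere gives you no control over these differences. Adjusting the additive constants $b_j$ can fix agreement at a single point, but not the mismatch of the gradients along the face. Your fallback of inserting thin transitional layers is the natural patch, but it reintroduces new affine pieces whose directions also need to avoid the bad sets, and level sets passing only through transitional layers would escape the dimension estimate — so the fix is far from routine and would need a careful measure bound on the transitional contribution. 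The alignment with the BDDS cells is also unnecessary (and adds the extra burden of proving density of BDDS-adapted piecewise affine functions); the paper's non-boundary simplex condition, obtained by a simple translation $f(\cdot+c)$, is all that is needed.
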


\begin{proof}
 
    If $s\leq 1$, there is nothing to prove, hence assume $s>1$.

 We can apply Theorem \ref{thm:upper_box_generic}: it suffices to construct a dense subset of $C_1^{\alpha}(F)$ such that for any
    $f$ coming from this set, almost every nonempty level set of $f$ has  Hausdorff  dimension at least $s-1$. 
    We apply Lemma \ref{lemma:piecewise_affine_approx} with $c=1$ to produce a dense subset in $C_1^{\alpha}(F)$ of piecewise affine functions, denoted by $PWA(F)$.

    We say that $f\in PWA(F)$ belongs to $PWANB(F)$, if each $S\in\mathcal{S}$ is {\it non-boundary} for $F$, that is for any $S\in\mathcal{S}$, either $\inte S \cap F \neq \emptyset$, or $S\cap F = \emptyset$.
    We claim that $PWANB(F)$ is still dense. 
    To prove this claim,  fix $f\in PWA(F)$, and  using Theorem \ref{thm:Grunb} extend it to $\mathbb{R}^p$ such that it remains $1^{-}$-Hölder-$\alpha$. 
   Next we show that one can select an arbitrarily small $c\in\mathbb{R}^p$, such that the function $\tilde{f}(x) = f(x+c)$ is in $PWANB(F)$, that is all the simplices $\tilde{\mathcal{S}}= \bigcup_{S\in\mathcal{S}}\{S-c\}$ are non-boundary for $F$.
    As $f$ is uniformly continuous, the existence of arbitrarily small $c$s would imply the denseness of $PWANB(F)$ due to the denseness of $PWA(F)$.

    Fix $S\in \mathcal{S}$ and denote by $A_S$ the set of $c$s for which $S-c$ is a non-boundary simplex. Then $A_S$ is clearly open. Moreover, if $c\notin A_S$, then $S-c$ contains some $x\in F$ on its boundary,
    which yields the existence of some $c'\in A_S$ arbitrarily close to $c$, ($c'=c+v$ is satisfying for any $v$ which points into $S$ from $x$). Thus $A_S$ is dense. However, then $A=\bigcap_{S\in \mathcal{S}}A_S$
    is still dense and open, and using any $c\in A$ yields $\tilde{f} \in PWANB(F)$. Thus $PWANB(F)$ is dense indeed.

    Fix $f\in PWANB(F)$ with underlying simplices $\mathcal{S}$.  By omitting simplices not intersecting $F$, we can assume that $\inte S \cap F \neq \emptyset$
    for any $S\in\mathcal{S}$. Then due to self-similarity, we know that $\dim_H(F\cap S) = \dim_H(F) = s$. 
    
    According to Proposition \ref{prop:wenxi}, for any $S\in \mathcal{S}$, 
    we can find some $\mathcal{N}_S \subseteq Gr(p, p-1)$ of zero $\gamma_{p, p-1}$ measure
    such that for any $W \notin \mathcal{N}_S$, and almost every $a\in Pr_{W^{\perp}}(F\cap S)$, we have that 
    $\dim_H ((F\cap S)\cap (W+a))=s-1$.
    
    We will consider the perturbation 
    $\tilde{f}(x)=f(x) + \langle t, x\rangle$ for fixed $t\in \mathbb{R}^p$, where $\langle \cdot, \cdot \rangle$ denotes inner/scalar product. 
    The resulting $\tilde{f}$ is also a piecewise affine function, and for any $S$ and $r\in f(F\cap S)$, 
    $\tilde{f}^{-1}(r)\cap S = (W_t + \tau) \cap F\cap S$ for some $\tau \in Pr_{W^{\perp}}(F\cap S)$ and $W_t\in Gr(p, p-1)$ depending on $t$.
    We say that $t$ is $S$-bad, if $W_t\in \mathcal{N}_S$. We claim that the set of $S$-bad $t$s is of zero Lebesgue measure.

    We can determine $W_t$ explicitly: if $f(x)=\langle \alpha_S, x\rangle + c_S$ on $S$
    for some $\alpha_S \in \mathbb{R}^p$ and $c_S\in \mathbb{R}$, then $W_t$ is the hyperplane orthogonal to $\alpha_S + t$. Thus 
    $\alpha_S + t$ is $S$-bad if and only if $\frac{\alpha_S + t}{\|\alpha_S + t\|}\in S^{n-1}$ is a unit normal vector of some
    $L\in \mathcal{N}_S$. Denote the set of such unit normal vectors by $E_{\mathcal{N}_S}$. As $\mathcal{N}_S$ has zero  $\gamma_{p, p-1}$ measure
    on $Gr(p, p-1)$, $E_{\mathcal{N}_S}\subseteq \mathbb{S}^{p-1}$ has zero spherical measure, due to the relationship between these measures explained before Theorem \ref{thm:slicing_thm}.
    That is, we
    need that $\frac{\alpha_S + t}{\|\alpha_S + t\|}$ evades the null-set $E_{\mathcal{N}_S}$ for almost every $t$. After embedding $\mathbb{S}^{p-1}$ into $\mathbb{R}^p$, we see that this
    evasion happens if and only if $\alpha_S + t$ evades the $p$-dimensional Lebesgue null-set $\bigcup_{\lambda>0}\lambda E_{\mathcal{N}_S} \subseteq \mathbb{R}^{p}$. (This set is a null-set due
    to Fubini's theorem.)
    That is, the set of $S$-bad $t$s is of zero Lebesgue measure, as we claimed. 
    Consequently, there exist $t$s with arbitrarily small norm which are
    not $S$-bad for any $S\in \mathcal{S}$. Choosing such a $t$, which is small enough, results in an $\tilde{f}$, which is 
    $1$-Hölder-$\alpha$, and for almost every level set intersecting some $S\in \mathcal{S}$, we have $\dim_H(f^{-1}(r))= \dim_H(F\cap S) -1 = s-1$.

    Finally, the observation concerning OSC self-similar sets follows from Lemma \ref{lemma:connected_is_nice_connected_self_similar}.

\end{proof}


\section{$D_{\overline{B}*}$ on the Sierpiński triangle} \label{sec:sier_triangle_box}

 In this section, we will give a lower and an upper estimate  of  $D_{\overline{B}*}(\alpha, \Delta)$ for the Sierpiński triangle $\Delta$.
Remarkably, in the limits $\alpha\to 0+$ and $\alpha\to 1-$, both of them coincide with the trivial upper bound $\overline{\dim}_{B}(\Delta) = \frac{\log 3}{\log 2}$ 
and with the general lower bound provided by Theorem \ref{thm:gen_lower_bound_box}, respectively.
These bounds are visualized on Figure \ref{fig:sier_upper_box_estimates}. 

\hspace*{-4cm}\begin{figure}[h] 
    \includegraphics[scale=0.35]{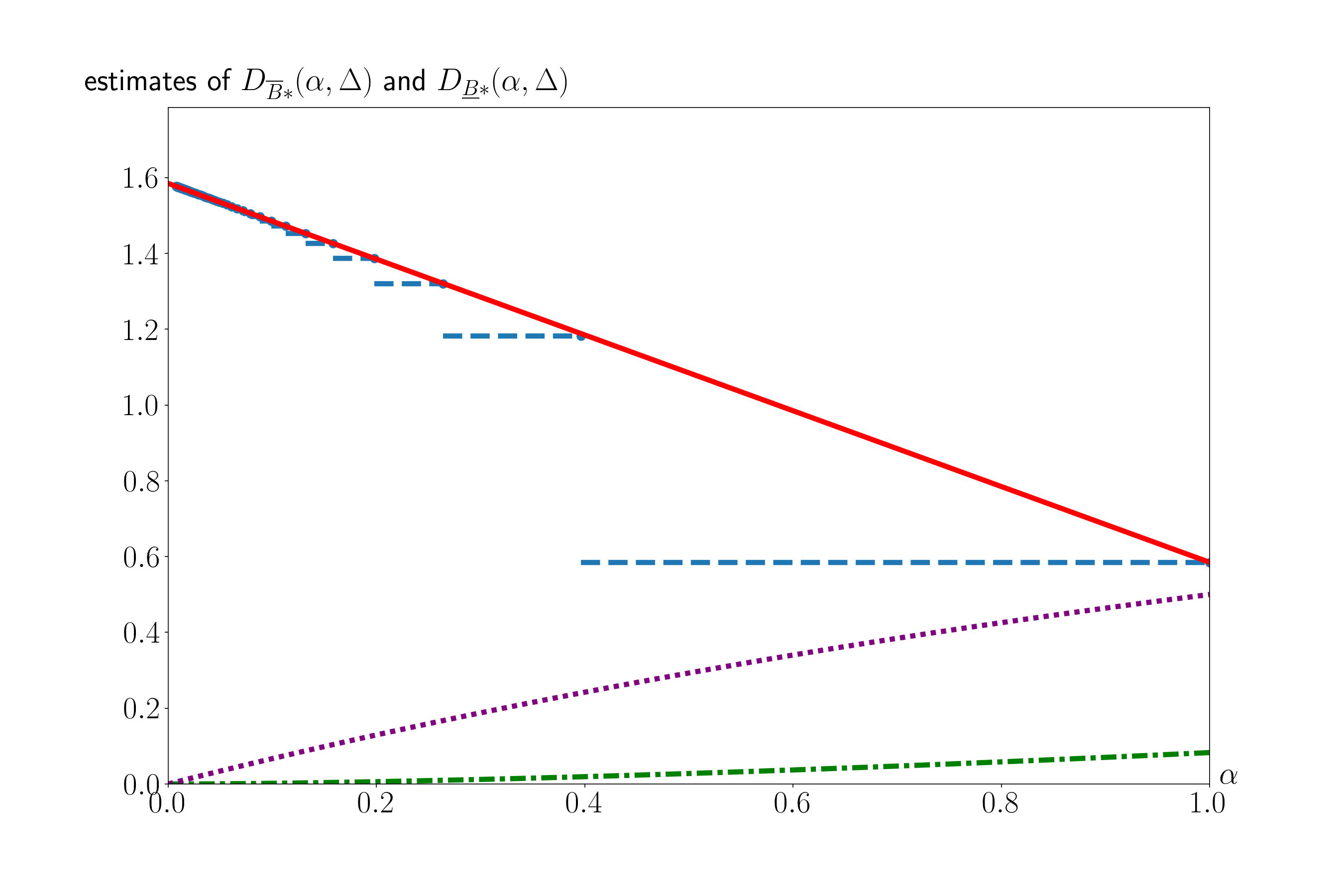}

    \caption{A visualization of the lower (blue, dashed line) and upper (red, simple line) bound for $D_{\overline{B}*}(\alpha)$,
    given by Theorems \ref{thm:sier_upper_box} and \ref{thm:lower_bound_upper_box_sier}, respectively.
     \label{fig:sier_upper_box_estimates} We note that the bounds only coincide in the limits $\alpha\to 0+$ and $\alpha\to 1-$. At other points, while the figure might
     hint otherwise, the blue and red contours are not touching. For more details, see Remark \ref{remark:bounds_close}. We also plotted
 the lower (green, dash-dot line) and upper estimates (purple, dotted line) of $D_{\underline{B}*}(\alpha)$ which are given by Theorem \ref{theorem:lower_box_sier}.}
\end{figure}

 First, we prove the upper bound, as it is a mere corollary at this point.

\begin{theorem}\label{thm:lower_bound_upper_box_sier}
    Assume $0<\alpha\leq 1$  Then for any $f\in C_{\alpha}^{1}(\Delta)$ and almost every $r\in \mathbb{R}$, we have
    $\overline{\dim}_B(f^{-1}(r))\leq \frac{\log 3}{\log 2} - \alpha$. In particular,
    $D_{\overline{B}*}(\alpha, \Delta)\leq \frac{\log 3}{\log 2} - \alpha$.
\end{theorem}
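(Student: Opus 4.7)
The plan is to apply Lemma \ref{lemma:from_covering_to_box} to the standard covering of the Sierpi\'nski triangle by its level-$n$ triangulation. Recall from Section \ref{sec:prelimin} that $\tau_n$ consists of the $3^n$ closed triangles appearing at the $n$th step of the construction of $\Delta$, each being a translated and rescaled copy of $\Delta_0$ with edge length proportional to $2^{-n}$. In particular, there is a constant $C>0$ (depending only on $\Delta_0$) such that $|\tau_n|\le C\cdot 3^n$ and $\diam(T)\le C\cdot 2^{-n}$ for every $T\in\tau_n$, and clearly $\tau_n$ covers $\Delta$.

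Thus setting $\mathcal{S}_n := \tau_n$, both hypotheses of Lemma \ref{lemma:from_covering_to_box} are met with $l=3$ and $\rho=2$. The lemma then yields that for any $f\in C_1^{\alpha}(\Delta)$ and almost every $r\in\mathbb{R}$,
$$
\overline{\dim}_B\bigl(f^{-1}(r)\bigr)\le \frac{\log 3}{\log 2}-\alpha.
$$

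For the second assertion, observe that the above pointwise bound holds for \emph{every} $f\in C_1^{\alpha}(\Delta)$ and a.e.\ $r$. Hence by the definition of $D_{\overline{B}*}^f(\Delta)$ as an essential infimum of the upper box dimension over nonempty level sets, we have $D_{\overline{B}*}^f(\Delta)\le \frac{\log 3}{\log 2}-\alpha$ for every such $f$. Taking the infimum over $\mathcal{G}\in\mg_{1,\alpha}$ of the supremum over $f\in\mathcal{G}$ in \eqref{def:generic_upper} preserves this inequality, giving $D_{\overline{B}*}(\alpha,\Delta)\le \frac{\log 3}{\log 2}-\alpha$.

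There is no real obstacle here: the theorem is an immediate specialization of Lemma \ref{lemma:from_covering_to_box}, with the genuine combinatorial input (the $3^n$ triangles of diameter $\asymp 2^{-n}$ covering $\Delta$) being the defining feature of the Sierpi\'nski triangle. The only mild care needed is to verify that the pointwise almost-everywhere bound on $\overline{\dim}_B(f^{-1}(r))$ translates correctly into the generic quantity $D_{\overline{B}*}(\alpha,\Delta)$, which follows directly from unwinding \eqref{def:generic_upper}.
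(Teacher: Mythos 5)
Your argument is exactly the paper's proof: the authors simply invoke Lemma \ref{lemma:from_covering_to_box} with $F=\Delta$ and $\mathcal{S}_n=\tau_n$ (so $l=3$, $\rho=2$), and both conclusions — the pointwise a.e.\ bound and the bound on $D_{\overline{B}*}(\alpha,\Delta)$ — are already contained in that lemma's statement. Your extra step unwinding \eqref{def:generic_upper} is unnecessary but harmless.
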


\begin{proof}
    The claim follows directly from Lemma \ref{lemma:from_covering_to_box}, applied to $F=\Delta$ and $\mathcal{S}_n=\tau_n$.
\end{proof}

Capitalizing on Lemma \ref{lemma:upper_box_generic}, a lower estimate can be achieved by constructing a dense sequence $f_1, f_2, ...$
of $C_1^\alpha(\Delta)$ such that for any $k$
we can determine $D_{\overline{B}}^{f_k}(\Delta)$, but in contrast to what happens in \cite{sierc} when one aims to estimate the Hausdorff dimension of the generic level sets, now
our goal is to set this quantity as large as possible.
We will construct a one-parameter family of functions $(\Phi_n)^{\infty}_{n= 2 }$, 
for which $d_n=D_{\overline{B}}^{\Phi_n}(\Delta)$ is large.  

For fixed $n$, the function $\Phi_n$ will be used as the building block 
of a dense set of functions so that for any element $f$ of it, we have $D_{\overline{B}}^{f}(\Delta)\geq d_n$.
However, $\Phi_n$ will have Hölder exponent $\alpha_n\to 0$, which will eventually yield better bounds for small $\alpha$s.

To construct $\Phi_n$, for the time being assume that the Sierpiński triangle is embedded into $\mathbb{R}^2$ so that its vertices are
$(0, 0), (2\sqrt 3, 0), (1/2, 1)$.  (We note that this choice of coordinates does not coincide with the one used in Figure \ref{fig:siercentral}.) 
As $n$ will be fixed during any argument, we suppress it in the notation: $\Phi=\Phi_n$.

Let $\mathcal{I}_{2n} = \{\frac{i}{2^{2n}}, \quad i = 0, 1, ..., 2^{2n}\}$ .
Any interval determined by adjacent points of $\mathcal{I}_{2n}$ is called a base interval of $\mathcal{I}_{2n}$.
We introduce an auxiliary function $\varphi=\varphi_n:[0,1]\to \mathbb{R}$. 
This function is defined to be piecewise linear with turning points in
$\mathcal{I}_{2n, 3} = \{\frac{i}{2^{2n}}, \quad 3|i, \quad i = 0, 1, ..., 2^{2n}\}\cup\{1\}$, such that it is constant 1 on the last interval,
and before that, it takes values 0 and 1 alternatingly.
(See Figure \ref{fig:aux_function}.)

\begin{figure}[h] 
    \centering
    \includegraphics[scale=0.3]{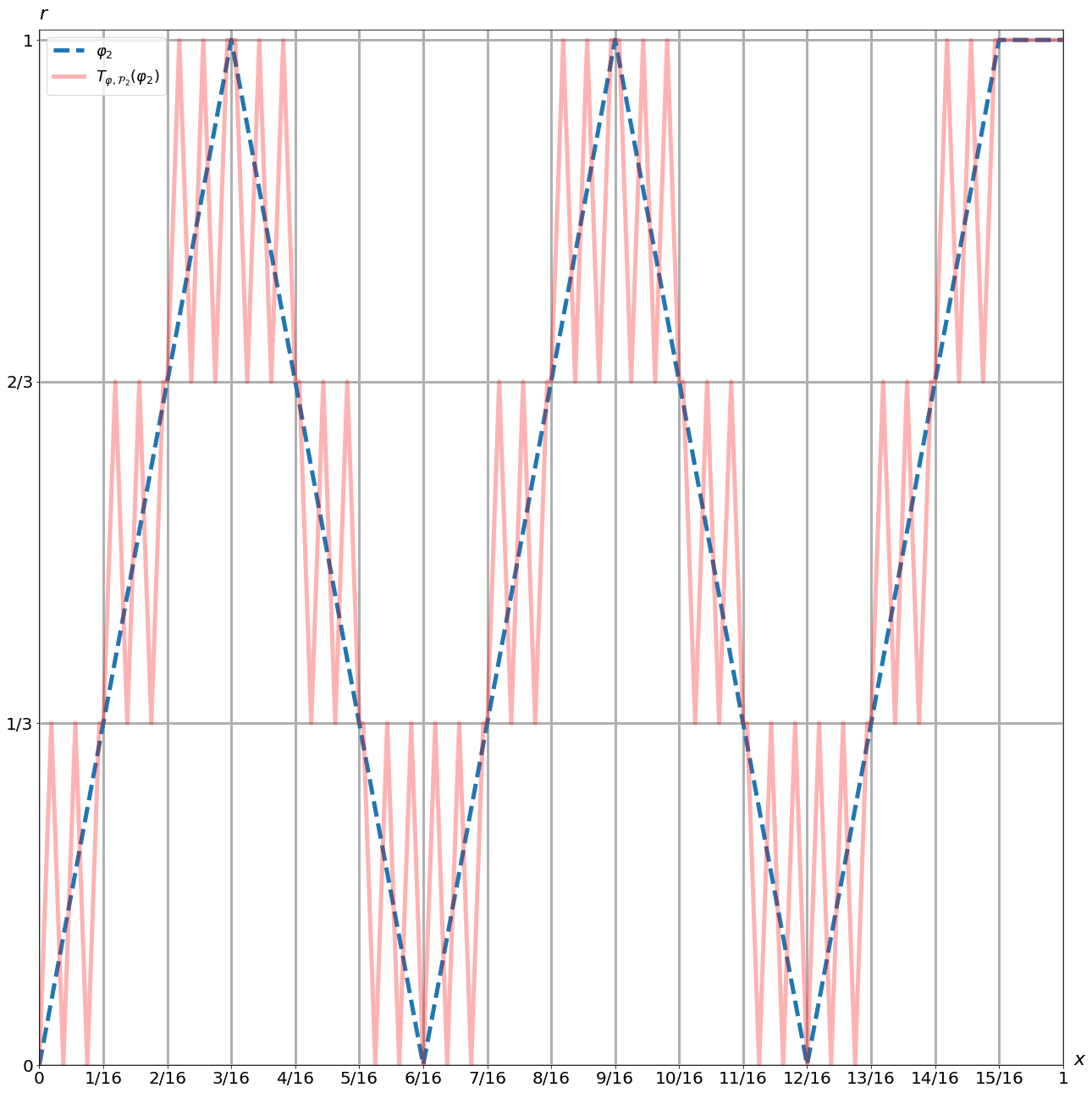} 
    \caption{In the foreground, the function $\varphi(x)=\varphi_n(x)$ for $n=2$ (blue, dashed line). In the background, $T_{\varphi,\capp_2}(\varphi)$ (red, simple line).  \label{fig:aux_function}}
\end{figure}

The function $\varphi$ and a partition $\capp$ of $[0,1]$ gives rise to an
 operator $T_{\varphi,\capp}$ which maps piecewise linear functions to piecewise linear functions.
Intuitively speaking, when we apply $T_{\varphi,\capp}$ to a piecewise linear function $f$, each growing linear piece of the graph of $f$ is replaced by the fitting affine transform of $\varphi$,
while each decreasing linear piece of the graph of $f$ is replaced by the fitting affine transform of $\varphi^{\mathrm{rev}}$ defined by $\varphi^{\mathrm{rev}}(x) = \varphi(1-x)$.
(By fitting, we mean that for any endpoint $x$ of any piece, $f(x)$ should equal $T_{\varphi,\capp}(f)(x)$.)
Formally, if $f:[0, 1] \to \mathbb{R}$ is a piecewise linear function, so that it is linear on any interval of the form
$[x_i, x_{i+1}]$ for the partition $\capp$ determined by $0=x_0 < x_1 < ... < x_m = 1$,
then for $x=c x_i + (1-c)x_{i+1}\in [x_i, x_{i+1}]$, let
$$T_{\varphi, \capp}(f)(x)= (f(x_{i+1})-f(x_i))\varphi(c) + f(x_i)$$
if $f$ is growing on $[x_i, x_{i+1}]$. Otherwise, let
$$T_{\varphi, \capp}(f)(x)= (f(x_i)-f(x_{i+1}))\varphi^{\mathrm{rev}}(c) + f(x_{i+1}).$$
We note that one could use the first formula for all the pieces to define $T_{\varphi,\capp}$ to get a very similar, but simpler operator. However, that would yield difficulties later, as we will emphasize.

Notice that $T_{\varphi,\capp}$ also depends on the partition, as we did not require the intervals  of monotonicity to  be maximal.
Now $\varphi = T_{\varphi,\capp_{1}}({ \mathrm {Id}}_{[0, 1]})$
when $\capp_{1}=\{ [0,1] \}.$
Using the partition $\capp_2=\mathcal{I}_{2n}$, the function $T_{\varphi,\capp_{2}}(\varphi)=T_{\varphi,\capp_{2}}\circ T_{\varphi,\capp_{1}}({ \mathrm {Id}}_{[0, 1]})$ 
is piecewise linear
with pieces determined by the partition $\mathcal{I}_{4n}$. By iterating this argument, one can recursively define 
${\mathbf T}_{\varphi,k}({ \mathrm {Id}}_{[0, 1]}):=T_{\varphi,\capp_k}\circ ...\circ T_{\varphi,\capp_j}\circ ... \circ T_{\varphi,\capp_1}({ \mathrm {Id}}_{[0, 1]})$ for $k\geq 2$,
using partitions $\capp_j=\mathcal{I}_{2(j-1)n}$.
By visualizing the particular functions of this sequence, one can easily verify that this sequence converges 
uniformly  as $k$ tends to infinity. (Again, we refer to Figure \ref{fig:aux_function}.)  Let us denote the limit by $\varphi^*=\varphi^*_n$. Finally, for $(x, y)\in \Delta$, let
$$\Phi_{n, k}(x, y)=\left({\mathbf T}_{\varphi,k}({ \mathrm {Id}}_{[0, 1]})\right)(y),$$
$$\Phi(x, y) = \Phi_n(x, y) = \varphi^*_n(y).$$


\begin{lemma} \label{lemma:high_box_holder}
    For $c = 6$ and $\alpha_n = \frac{\log 3}{2n \log 2}$, 
    we have $\varphi^* \in C_{6}^{\alpha_n}([0, 1])$. Consequently, $\Phi \in C_{6}^{\alpha_n}(\Delta)$.
\end{lemma}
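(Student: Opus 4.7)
The plan is to establish a self-similarity property for $\varphi^*$: on each first-level base interval $I_i := [i/2^{2n},(i+1)/2^{2n}]$, the restriction of $\varphi^*$, after affinely rescaling the domain to $[0,1]$, is of the form $\pm\tfrac{1}{3}\varphi^* + c_i$, with a horizontal reflection in the case of the minus sign. Iterating down to finer scales then gives that on every level-$k$ interval of length $2^{-2kn}$ the function $\varphi^*$ is an affine image of $\varphi^*$ with vertical amplitude scaled by $3^{-k}$. A straightforward induction on the construction shows that each approximant $\psi_k := \mathbf{T}_{\varphi,k}(\mathrm{Id}_{[0,1]})$ (and hence $\varphi^*$) takes values in $[0,1]$; combined with $\psi_k(0)=0$ and $\psi_k(1)=1$ for all $k$ (a direct consequence of $\varphi(0)=0$, $\varphi(1)=1$, and the fact that $T_{\varphi,\capp}$ preserves values at partition endpoints), continuity forces $\varphi^*([0,1])=[0,1]$, so $\varphi^*$ has oscillation at most $3^{-k}$ on any level-$k$ interval.

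To justify the self-similarity, I prove the analogous statement for each $\psi_k$ by induction on $k$. The base case $k=1$ is immediate from the definition of $T_{\varphi,\capp_2}$ applied to the three types of linear pieces of $\varphi$ on the intervals $I_i$, whose value changes are $+\tfrac{1}{3}$, $-\tfrac{1}{3}$, or $0$ according as $\varphi$ rises, falls, or is constant on $I_i$. The induction step reduces to the commutation identity $T_{\varphi,\capp}(f^{\mathrm{rev}})=(T_{\varphi,\capp'}(f))^{\mathrm{rev}}$ (where $\capp'$ is the reflected partition), which follows from $\varphi^{\mathrm{rev}}(c)=\varphi(1-c)$ by a short computation and is precisely why the definition of $T_{\varphi,\capp}$ invokes $\varphi^{\mathrm{rev}}$ for decreasing pieces, as foreshadowed in the remark preceding the lemma.

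With the oscillation bound in hand, the Hölder estimate is immediate. Given $x,y\in[0,1]$ with $\delta:=|x-y|\in(0,1)$, pick the unique integer $k\geq 0$ satisfying $2^{-2(k+1)n}\leq\delta<2^{-2kn}$. Then $x$ and $y$ lie in at most two adjacent level-$k$ intervals, so
\[
|\varphi^*(x)-\varphi^*(y)|\leq 2\cdot 3^{-k}.
\]
The definition $\alpha_n=\log 3/(2n\log 2)$ gives $2^{2n\alpha_n}=3$, whence $\delta^{\alpha_n}\geq 2^{-2(k+1)n\alpha_n}=3^{-(k+1)}$. Therefore $6\delta^{\alpha_n}\geq 2\cdot 3^{-k}\geq |\varphi^*(x)-\varphi^*(y)|$, establishing $\varphi^*\in C_6^{\alpha_n}([0,1])$. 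The claim $\Phi\in C_6^{\alpha_n}(\Delta)$ is then immediate since $\Phi(x,y)=\varphi^*(y)$ depends only on the second coordinate and $|y_1-y_2|$ is bounded above by the Euclidean distance in the plane.

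The main obstacle is the careful bookkeeping in the inductive proof of self-similarity, in particular verifying that horizontal reflections compose correctly through successive applications of the operators $T_{\varphi,\capp_j}$. Once the oscillation bound $3^{-k}$ on level-$k$ intervals is secured, the Hölder calculation is a one-liner, and the constant $6$ emerges naturally from the factor-of-two loss of covering $\{x,y\}$ by two adjacent intervals combined with the identity $2\cdot 3^{-k}=6\cdot 3^{-(k+1)}$.
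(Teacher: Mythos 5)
Your proof is correct and follows essentially the same route as the paper: fix $k$ with $2^{-2n(k+1)}\leq|x-y|<2^{-2nk}$, note that $x,y$ lie in at most two adjacent level-$k$ intervals on each of which $\varphi^*$ oscillates by at most $3^{-k}$, and conclude $|\varphi^*(x)-\varphi^*(y)|\leq 2\cdot 3^{-k}=6\cdot 3^{-(k+1)}\leq 6|x-y|^{\alpha_n}$. The only difference is that you spell out the self-similarity/commutation argument underlying the oscillation bound (and there the sign on the decreasing pieces should really be $+\tfrac{1}{3}\varphi^{*,\mathrm{rev}}+c_i$, not $-\tfrac13$, though this does not affect the bound), whereas the paper takes this bound as evident from the construction of $\varphi^*$.
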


\begin{proof}
    Let $x, y\in [0,1]$ be arbitrary, and fix $k$ so that
    $$\frac{1}{2^{2n(k+1)}} \leq |x-y|\leq \frac{1}{2^{2nk}}.$$
    Then $x, y$ are either in the same interval or in adjacent intervals determined by $\mathcal{I}_{2nk}$. As the $\varphi^*$-image of 
    such an interval has diameter $0$ or $\frac{1}{3^k}$, this yields
    $$|\varphi^*(x) - \varphi^*(y)|\leq \frac{2}{3^k} =  6\cdot \left(\frac{1}{2^{2n(k+1)}}\right)^{\frac{\log 3}{2n \log 2}} \leq
    c|x-y|^{\alpha_n}.$$
    The claim concerning $\Phi$ trivially follows.
\end{proof}

Note that  $\alpha_n\to 0+$ as $n\to \infty$, hence we will be able to use our construction to produce 
Hölder-$\alpha$ functions for any $\alpha$.


Now we would like to ensure that $\Phi_n$ has large level sets. To this end, we introduce some further notation. First, we will divide $\tau_{2n}$ into subsets based
on its "rows". More explicitly, we say that $T\in\tau_{2n}$ is in $\tau_{2n}(j)$ ($j=0, 1, ..., 2^{2n}-1$), if its projection to the $y$-axis is the segment $\left[1-\frac{j+1}{2^{2n}}, 1 - \frac{j}{2^{2n}}\right]$.
This reverse indexing makes the formulation of the following lemma simpler. 
To help to follow and check the details of the next argument for the case $n=2$ the reader can use 
Figure \ref{fig:sierpj}.

\begin{lemma}
	For $0\leq j <2^{2n}$, the cardinality $p(j)$ of $\tau_{2n}(j)$ is $2^m$, where $m$ is the number of 1s in the binary expansion of $j$. 
    (Thus $p(j)$ is indeed independent of $n$ under the condition $0\leq j <2^{2n}$, omitting $n$ from this notation is justified.)
\end{lemma}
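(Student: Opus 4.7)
The plan is to prove by induction on the level $N \geq 1$ the more general statement: the number of level-$N$ triangles of $\Delta$ whose projection onto the $y$-axis is the segment $\left[1 - \frac{j+1}{2^N}, 1 - \frac{j}{2^N}\right]$ equals $2^{m(j)}$, where $m(j)$ denotes the number of $1$s in the binary expansion of $j$ (leading zeros being immaterial). The lemma is then simply the case $N = 2n$.

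For the base case $N = 1$, the Sierpi\'nski triangle at level $1$ consists of three sub-triangles: the top one, which lies in the slab indexed by $j = 0$ and contributes $1 = 2^{m(0)}$, and the two bottom ones, which both lie in the slab indexed by $j = 1$ and together contribute $2 = 2^{m(1)}$.

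For the inductive step, exploit the self-similar decomposition $\Delta = T \cup B_L \cup B_R$, where $T$ is the top half-scale copy occupying the vertical slab $[1/2, 1]$, and $B_L$, $B_R$ are the bottom-left and bottom-right half-scale copies, both occupying $[0, 1/2]$. Level-$N$ triangles of $\Delta$ correspond bijectively, via these three similarities, to level-$(N-1)$ triangles of $\Delta$. If $0 \le j < 2^{N-1}$, the $j$th row at level $N$ lies entirely inside $T$ and corresponds to the $j$th row at level $N-1$ of $\Delta$, so by induction it contains $2^{m(j)}$ triangles; here the $N$-bit binary expansion of $j$ begins with a $0$, so its digit sum equals that of its $(N-1)$-bit expansion. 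If $2^{N-1} \le j < 2^N$, write $j = 2^{N-1} + j'$ with $0 \le j' < 2^{N-1}$; both $B_L$ and $B_R$ contribute to row $j$, each contributing $2^{m(j')}$ triangles (corresponding to row $j'$ of a level-$(N-1)$ Sierpi\'nski triangle), which sums to $2 \cdot 2^{m(j')} = 2^{m(j)}$, since prepending a leading $1$ to the binary expansion of $j'$ produces that of $j$ and increments the digit sum by one.

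I anticipate no substantive obstacle. The only care needed is the binary bookkeeping in the inductive step and the observation that the three half-scale sub-copies $T$, $B_L$, $B_R$ are non-overlapping (which follows from the OSC with the usual witnessing open set), so that the row counts coming from different sub-copies add without any double counting.
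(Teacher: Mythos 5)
Your proof is correct, and it uses the same core idea as the paper's proof: induction via the self-similar decomposition of $\Delta$ into a top half-scale copy and two bottom half-scale copies, splitting according to whether $j < 2^{N-1}$ or $j \geq 2^{N-1}$ (equivalently, on the leading bit of $j$). The one difference worth noting is that you induct on the level $N$ directly, proving the count formula for \emph{every} level, whereas the paper inducts on $n$ and tries to work only with even levels $2n$. Yours is in fact the cleaner formulation: the one-step self-similar decomposition relates level $N$ to level $N-1$, and your two cases reduce exactly to the inductive hypothesis at level $N-1$. The paper's version, which in its first case asserts $p_{2n}(j) = p_{2(n-1)}(j)$ for all $j < 2^{2n-1}$, has a small lacuna (or typo): for $2^{2(n-1)} \le j < 2^{2n-1}$ (e.g.\ $n=1$, $j=1$, where $p_2(1)=2$) the quantity $p_{2(n-1)}(j)$ is not even in the scope of the lemma, and a literal two-level jump would really require a four-case split on the top two bits of $j$. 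Passing through the intermediate odd level, as you implicitly do by inducting on $N$, is the most economical fix, and your careful bookkeeping of how prepending a leading $0$ or $1$ to $j'$ affects the digit sum is exactly the content of the recursion.
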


\begin{proof}
	We use induction on $n$. For $n=0$, the claim trivially holds. Moreover, for $n>0$, if $j<2^{2n-1}$, then $p_{2n}(j)=p_{2(n-1)}(j)$, for which the claim holds by induction. On the other hand,
	if $j\geq 2^{2n-1}$, then $\tau_{2n}(j)$ consists of two identical copies of $\tau_{2n}(j-2^{2n-1})$, hence $p_{2n}(j) = 2p_{2n}(j-2^{2n-1})$. However, the binary expansion of $j$ contains one more 1s
	than the binary expansion of $j-2^{2n-1}$, thus it yields the desired conclusion.
\end{proof}

We also wish to count the cardinality of certain special unions of such $\tau_{2n}(j)$s. Notably, we are interested in the number of triangles in $\tau_{2n}$ which are mapped to the intervals $\left[\frac{i}{3}, \frac{i+1}{3}\right]$ 
($i=0, 1, 2$) by $\Phi_n$, or by $\Phi_{n, 1}$. (By construction, these sets of triangles coincide, as the images of triangles in $\tau_{2n}$ do not change as $k\to\infty$ in $\Phi_{n,k}$). 
Denote the number of such triangles by $x_{n,i}$ ($i=0, 1, 2$). 

\begin{lemma} \label{lemma:recursive}
	$x_{n,0} = 3^{2n-1}+3^{n-1} $, $x_{n,1} = 3^{2n-1}-3^{n-1}$, $x_{n,2} = 3^{2n-1}-1$.
\end{lemma}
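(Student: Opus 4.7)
The plan is to first describe the level structure of $\varphi_n$ on each base interval of $\mathcal I_{2n}$, then convert this to a counting problem on the row index $j$ modulo $6$, and finally evaluate the resulting sums of $p(j)=2^{\mathrm{popcount}(j)}$ via a generating-function / roots-of-unity argument.

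Since $4^n\equiv 1\,(\mathrm{mod}\,3)$, the function $\varphi_n$ consists of $(4^n-1)/3$ monotonic segments (this count is always odd, so the sequence of monotonic segments alternates up/down and begins and ends with an up-slope) followed by one terminal constant-$1$ base interval. Each monotonic segment spans three base intervals of $\mathcal I_{2n}$ whose $\varphi_n$-images are $[0,1/3]$, $[1/3,2/3]$, $[2/3,1]$ in this order for up-slopes and reversed for down-slopes; the terminal base interval has image $\{1\}$ and contributes to no $x_{n,i}$. Indexing base intervals by $i$ and setting $j=4^n-1-i$, a direct bookkeeping (tracking whether the containing segment is an up-slope or down-slope via $s=\lfloor i/3\rfloor+1$) shows that $x_{n,0}$ equals the sum of $p(j)$ over $0\le j<4^n$ with $j\equiv 3$ or $j\equiv 4\,(\mathrm{mod}\,6)$, that $x_{n,1}$ is the corresponding sum over $j\equiv 2\,(\mathrm{mod}\,3)$, and that $x_{n,2}$ is the sum over $j\equiv 1\,(\mathrm{mod}\,6)$ together with the sum over $j\equiv 0\,(\mathrm{mod}\,6),\ j\ne 0$; the omission of $j=0$ reflects the terminal constant segment.

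To evaluate the arithmetic-progression sums $S_a^{(n)}:=\sum p(j)$ (over $0\le j<4^n$ with $j\equiv a\,(\mathrm{mod}\,6)$) I would use the factorization $P(z):=\sum_{j=0}^{4^n-1}p(j)z^j=\prod_{b=0}^{2n-1}(1+2z^{2^b})$ and the roots-of-unity filter at $\omega=e^{2\pi i/6}$. The crucial identity is $(1+2\zeta)(1+2\bar\zeta)=3$ for the primitive cube root $\zeta=e^{2\pi i/3}$; combined with the fact that $2^b\equiv 2,4,2,4,\ldots\,(\mathrm{mod}\,6)$ for $b\ge 1$, this gives $P(\omega^2)=P(\omega^4)=3^n$, while short direct calculations yield $P(\omega^3)=-3^{2n-1}$ and show that the complex-conjugate pair $P(\omega),P(\omega^5)$, after pairing with $\omega$-factors in the filter, contributes real terms of the form $\pm 3^n$. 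Substituting produces $S_0^{(n)}=3^{2n-2}$, $S_1^{(n)}=2\cdot 3^{2n-2}$, $S_3^{(n)}=2\cdot 3^{2n-2}+2\cdot 3^{n-1}$, $S_4^{(n)}=3^{2n-2}-3^{n-1}$, with the remaining $S_a^{(n)}$ recovered from the mod-$3$ totals $3^{2n-1}\pm 3^{n-1}$. Combining, $x_{n,0}=S_3^{(n)}+S_4^{(n)}=3^{2n-1}+3^{n-1}$, $x_{n,1}=3^{2n-1}-3^{n-1}$, and $x_{n,2}=S_1^{(n)}+S_0^{(n)}-1=3^{2n-1}-1$, as claimed.

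The hard part will be the careful execution of the mod-$6$ filter: the mod-$3$ collapse is immediate via $(1+2\zeta)(1+2\bar\zeta)=3$, but the $\omega$- and $\omega^5$-contributions require careful bookkeeping to confirm that imaginary parts cancel and leave the claimed integer expressions. A complex-arithmetic-free alternative is induction on $n$ via the recurrence $S_a^{(n+1)}=5S_a^{(n)}+2S_{(a+2)\bmod 6}^{(n)}+2S_{(a+4)\bmod 6}^{(n)}$, obtained by splitting each $j\in[0,4^{n+1})$ as $j=k\cdot 4^n+r$ with $k\in\{0,1,2,3\}$ and $r\in[0,4^n)$, using $\mathrm{popcount}(k\cdot 4^n+r)=\mathrm{popcount}(k)+\mathrm{popcount}(r)$ and $4^n\equiv 4\,(\mathrm{mod}\,6)$; the base case $n=1$ is direct.
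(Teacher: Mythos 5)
Your proposal is correct, and its primary route is genuinely different from the paper's. Both begin the same way: indexing rows by $j = 4^n - 1 - i$, tracking whether a base interval lies in an up- or down-slope segment of $\varphi_n$, and reducing $x_{n,0},x_{n,1},x_{n,2}$ to sums of $p(j)=2^{\mathrm{popcount}(j)}$ over residue classes of $j$ modulo $6$; your residue-class assignments ($[0,1/3]\leftrightarrow j\equiv 3,4$; $[1/3,2/3]\leftrightarrow j\equiv 2,5$; $[2/3,1]\leftrightarrow j\equiv 0,1$ with $j=0$ omitted) coincide with equations \eqref{eq:x0}--\eqref{eq:x2}. Where you diverge is in evaluating the sums $S_a^{(n)}$: the paper sets up the same kind of recurrence you mention (written in terms of $y_{n,i}$, which excludes $j=0$ and hence carries additive correction constants), posits the closed forms, and verifies them by induction. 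You instead factor the generating polynomial $P(z)=\prod_{b=0}^{2n-1}(1+2z^{2^b})$ and apply a sixth-roots-of-unity filter, using $(1+2\zeta)(1+2\bar\zeta)=3$ and the periodicity of $2^b\bmod 6$ to get $P(\omega^2)=P(\omega^4)=3^n$, $P(\omega^3)=-3^{2n-1}$, and a conjugate pair $P(\omega),P(\omega^5)$ whose filtered contribution is real; I checked that these evaluations, and the resulting $S_0^{(n)}=3^{2n-2}$, $S_1^{(n)}=2\cdot3^{2n-2}$, $S_3^{(n)}=2\cdot3^{2n-2}+2\cdot3^{n-1}$, $S_4^{(n)}=3^{2n-2}-3^{n-1}$, are all correct. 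The filter derives the closed forms rather than merely verifying a guess, which is a genuine advantage; the paper's induction is more elementary and self-contained. Your ``complex-arithmetic-free alternative'' via the recurrence $S_a^{(n+1)}=5S_a^{(n)}+2S_{(a+2)\bmod 6}^{(n)}+2S_{(a+4)\bmod 6}^{(n)}$ is essentially the paper's argument, and in fact your choice to include $j=0$ in $S_a$ makes the recurrence cleaner than the paper's (no additive $+4$ and $+2$ terms), at the small cost of a $-1$ in the final combination for $x_{n,2}$.
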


\begin{proof}
	As $\Phi_{n, 1}(x, y)=\varphi_n(y)$, due to the construction of $\varphi_n$, we obtain the following identities (all the congruences are to be understood mod 6 in this proof,
	again we refer to Figure \ref{fig:sierpj}):
	\begin{equation}\label{eq:x0}
	x_{n,0} = \sum_{\substack{0<j<2^{2n} \\ j\equiv 3}} p(j) + \sum_{\substack{0<j<2^{2n} \\ j\equiv 4}} p(j) =: y_{n, 3} + y_{n, 4},
	\end{equation}
	\begin{equation}\label{eq:x1}
	x_{n,1} = \sum_{\substack{0<j<2^{2n} \\ j\equiv 2}} p(j) + \sum_{\substack{0<j<2^{2n} \\ j\equiv 5}} p(j) =: y_{n, 2} + y_{n, 5},
	\end{equation}
	\begin{equation}\label{eq:x2}
	x_{n,2} = \sum_{\substack{0<j<2^{2n} \\ j\equiv 0}} p(j) + \sum_{\substack{0<j<2^{2n} \\ j\equiv 1}} p(j) =: y_{n, 0} + y_{n, 1}.
	\end{equation}
	We will prove our claim via induction on $n$, to which end we first give recursive formulas for the sequences $(y_{n,i})$. By definition,
    $$y_{n+1, i} = \sum_{\substack{0<j<2^{2(n+1)} \\ j\equiv i}} p(j).$$
    Dividing the interval $(0, 2^{2(n+1)})$ into four equal parts, this can be rewritten as
    \begin{equation}\label{eq:yi}
    y_{n+1, i} = \sum_{\substack{0<j<2^{2n} \\ j\equiv i}} p(j) + \sum_{\substack{2^{2n}\leq j<2^{2n+1} \\ j\equiv i}} p(j) + \sum_{\substack{2^{2n+1}\leq j<2^{2n+1} + 2^{2n} \\ j\equiv i}} p(j) + \sum_{\substack{2^{2n+1} + 2^{2n}\leq j<2^{2(n+1)} \\ j\equiv i}} p(j).
    \end{equation}
    Now note that if $j\in \left[2^{2n}, 2^{2n+1}\right)$,
	then $p(j) = 2p(j-2^{2n})$, while if $j\in\left[2^{2n+1}, 2^{2n+1}+2^{2n}\right)$, then $p(j)=2p(j-2^{2n+1})$, and finally, if $j\in\left[2^{2n+1}+2^{2n}, 2^{2(n+1)}\right)$, then $p(j)=4p(j-2^{2n+1}-2^{2n})$. 
	We also know that how these subtractions interact with mod 6 residue classes, as $2^{2n}\equiv 4$, while $2^{2n+1}\equiv 2$.
	Thus \eqref{eq:yi} can be continued as
	$$y_{n+1, i} =\sum_{\substack{0<j<2^{2n} \\ j\equiv i}} p(j) + 2\sum_{\substack{0\leq j<2^{2n} \\ j\equiv i+2}} p(j) + 2\sum_{\substack{0\leq j<2^{2n} \\ j\equiv i+4}} p(j) + 4\sum_{\substack{0\leq j<2^{2n} \\ j\equiv i}} p(j).$$
\begin{figure}[ht] 
    \centering
    \includegraphics[scale=0.3]{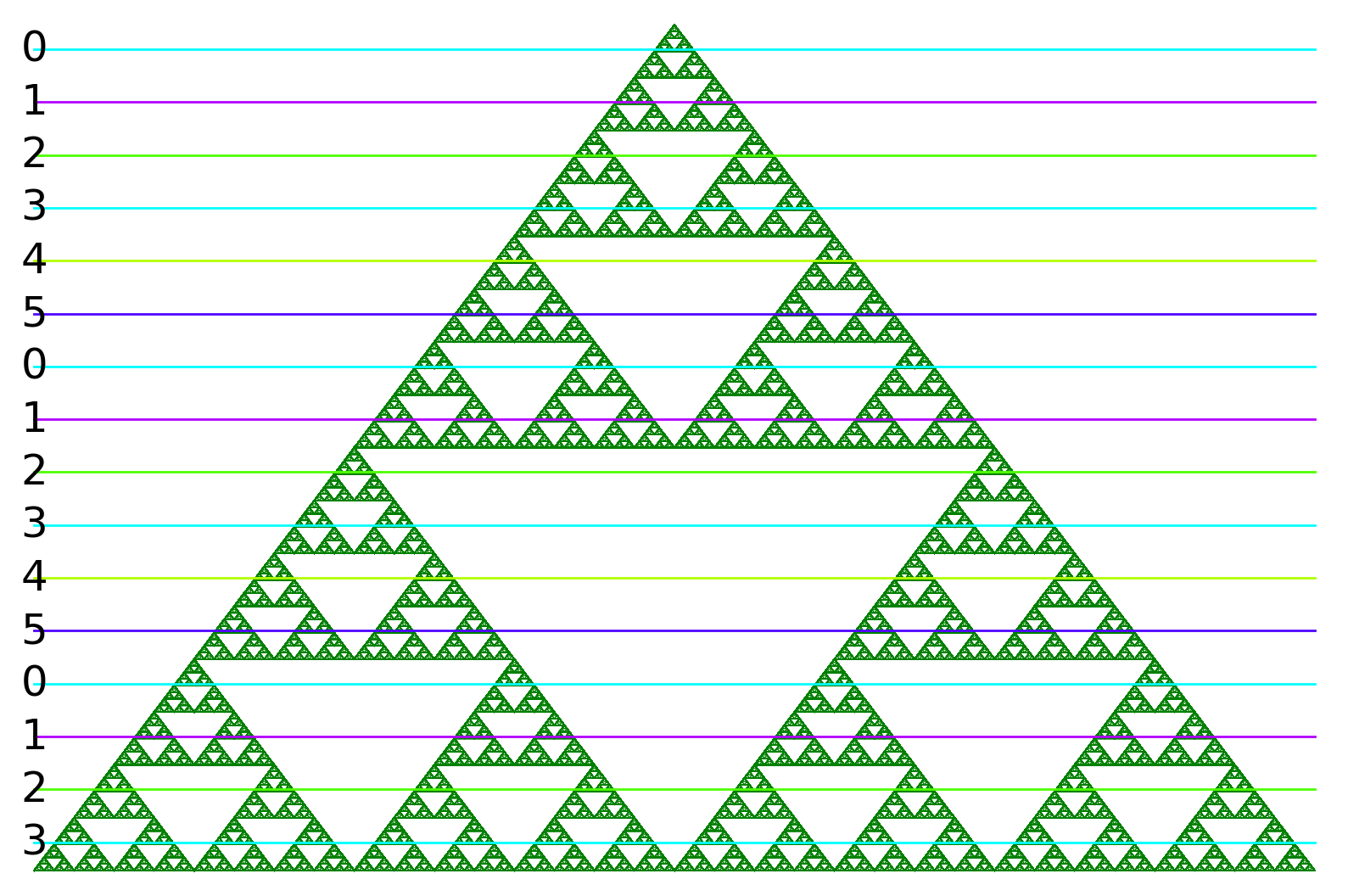} 
    \caption{Illustration of the case when $n=2$. The horizontal lines are numbered modulo $6$.\label{fig:sierpj}}
\end{figure}
	Notice that these sums resemble the sums defining $y_{n,i}, y_{n,i+2}, y_{n,i+4}, y_{n, i}$, respectively, apart from that they might include $p(0)=1$, depending on
    whether the congruence they are conditioned on holds for $j=0$. This observation leads to the following set of
	recursive equations:
	$$y_{n+1,0} = 5y_{n, 0} + 2y_{n,2} + 2y_{n,4} + 4,\qquad
	y_{n+1,1} = 5y_{n, 1} + 2y_{n,3} + 2y_{n,5},$$
	$$y_{n+1,2} = 5y_{n, 2} + 2y_{n,4} + 2y_{n,0} + 2,\qquad
	y_{n+1,3} = 5y_{n, 3} + 2y_{n,5} + 2y_{n,1},$$
	$$y_{n+1,4} = 5y_{n, 4} + 2y_{n,0} + 2y_{n,2} + 2,\qquad
	y_{n+1,5} = 5y_{n, 5} + 2y_{n,1} + 2y_{n,3}.$$
	Notice further that $y_{1,0} = y_{1,4} = y_{1,5} = 0$, $y_{1,1} = y_{1,2} = 2$, and $y_{1,3}=4$. 

	Now it is easy to check by induction, substituting the below formulae, that
	$$y_{n,0} = 3^{2(n-1)}-1,\qquad\qquad
	y_{n,1} = 2\cdot 3^{2(n-1)},\qquad
	y_{n,2} = 3^{2(n-1)}+3^{n-1},$$
	$$y_{n,3} = 2\cdot(3^{2(n-1)}+3^{n-1}),\quad
	y_{n,4} = 3^{2(n-1)}-3^{n-1},\quad
	y_{n,5} = 2\cdot(3^{2(n-1)}-3^{n-1}).$$
	Substituting into \eqref{eq:x0}, \eqref{eq:x1}, \eqref{eq:x2} yields the statement of the lemma.
\end{proof}

\begin{lemma} \label{lemma:sier_upper_box_dense_set}
    \begin{itemize}
        \item[(a)] If $r$ is a triadic rational, then $\overline{\dim}_B(\Phi_n^{-1}(r))=\frac{\log 3}{\log 2}$.
        \item[(b)] Otherwise, if $r=\sum_{i=1}^{\infty}r_i 3^{-i}$, then we have
        $$\overline{\dim}_B(\Phi_n^{-1}(r)) =\limsup_{k\to\infty} \frac{\log \prod_{i=1}^{k}x_{n, r_i}}{\log 2^{2nk}}.$$
        \item[(c)] $D_{\overline{B}}^{\Phi_n}(\Delta)\geq d_n$, where $d_n = \frac{\log \sqrt[3]{x_{n,0} x_{n,1} x_{n, 2}}}{\log 2^{2n}}$.
        \item[(d)] $d_n\geq \frac{\log 3^{2n-1}-1}{\log 2^{2n}}$.
    \end{itemize}

\end{lemma}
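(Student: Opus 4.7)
The unifying idea is that $\Phi_n$ acts self-similarly on the ternary expansion of the level $r$. By construction, the range $[0,1]$ of $\Phi_n$ is split into thirds $[i/3,(i+1)/3]$ ($i=0,1,2$), and by the counting underlying Lemma \ref{lemma:recursive} the triangles of $\tau_{2n}$ distribute into groups of sizes $x_{n,0},x_{n,1},x_{n,2}$ according to which third they map into. The same rule recurs inside every triangle of $\tau_{2n}$ by self-similarity, so for every length-$k$ ternary string $(r_1,\dots,r_k)$ exactly $\prod_{i=1}^{k}x_{n,r_i}$ triangles of $\tau_{2nk}$ are mapped by $\Phi_n$ onto the triadic subinterval $\bigl[r^{(k)}3^{-k},(r^{(k)}+1)3^{-k}\bigr]$ with $r^{(k)}=\sum_{i=1}^{k}r_i 3^{k-i}$; moreover, $\Phi_n$ restricted to any such triangle surjects onto this subinterval. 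All four parts will fall out of this bookkeeping.

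For (b), if $r=\sum_{i\ge 1}r_i 3^{-i}$ is not a triadic rational then $r$ lies strictly inside its level-$k$ triadic containing interval for every $k$. The cover of $\Phi_n^{-1}(r)$ by the $\prod_{i=1}^{k}x_{n,r_i}$ labeled triangles of $\tau_{2nk}$ directly gives the upper bound on $\overline{\dim}_B$. For the matching lower bound, the surjectivity noted above implies $\Phi_n^{-1}(r)$ meets each of these triangles, and since they are non-overlapping of diameter $\asymp 2^{-2nk}$, any $\delta$-cover with $\delta \asymp 2^{-2nk}$ must use at least a constant fraction of $\prod_{i=1}^{k}x_{n,r_i}$ sets.

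For (a), the key is to show $\varphi_n^{*-1}(r)$ contains an interval whenever $r$ is a positive triadic rational. The constant-$1$ tail of $\varphi_n$ on $[1-2^{-2n},1]$ persists through every iteration of $T_{\varphi_n,\capp}$, and each application of $T$ introduces, within every scaled copy of $\varphi_n$ or $\varphi_n^{\mathrm{rev}}$ replacing a monotone piece with top value $v$, a fresh constant-$v$ sub-interval (the image of the tail). Tracking the endpoint values across iterations shows that any positive triadic rational $r=p/3^{k_0}$ arises as such a top value at level $k_0$, so $\varphi_n^{*-1}(r)$ contains an interval $J$. Then $\Phi_n^{-1}(r)\supseteq \Delta \cap (\mathbb{R}\times J)$ contains a similar copy of $\Delta$, giving $\overline{\dim}_B \Phi_n^{-1}(r) \ge \log 3/\log 2$; the reverse inequality is trivial.

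For (c), Borel's normal number theorem ensures Lebesgue-almost every $r$ is ternary normal, so its digits hit each of $0,1,2$ with asymptotic frequency $1/3$. For such $r$, part (b) together with Ces\`aro convergence of $\frac{1}{k}\sum_{i=1}^{k}\log x_{n,r_i}$ yields
\begin{equation*}
\overline{\dim}_B \Phi_n^{-1}(r) \ge \frac{\log\sqrt[3]{x_{n,0}x_{n,1}x_{n,2}}}{\log 2^{2n}} = d_n,
\end{equation*}
proving $D_{\overline{B}}^{\Phi_n}(\Delta)\ge d_n$. Finally (d) is a direct algebraic check: substituting the explicit formulae from Lemma \ref{lemma:recursive} gives $x_{n,0}x_{n,1}x_{n,2}=(3^{4n-2}-3^{2n-2})(3^{2n-1}-1)$, which expansion shows exceeds $(3^{2n-1}-1)^3$ by $5\cdot 3^{4n-3}-8\cdot 3^{2n-2}+1>0$. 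The main obstacle will be the lower bound in (b): verifying that $\Phi_n^{-1}(r)$ actually meets every one of the $\prod x_{n,r_i}$ admissible triangles requires carefully unpacking the limit $\Phi_n=\lim_k \Phi_{n,k}$ to establish the surjectivity of $\Phi_n|_T$ onto the associated triadic interval, rather than merely the surjectivity of its piecewise-linear approximants.
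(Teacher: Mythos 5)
Your proof is correct and takes essentially the same approach as the paper: the same recursive digit-counting for (b), the same use of the persistent constant pieces of $\varphi^*_n$ for (a), the same appeal to ternary normality for (c), and an algebraically equivalent check for (d) (the paper instead observes $x_{n,0}x_{n,1}\geq x_{n,2}^2$, which gives the bound immediately). The "main obstacle" you flag — surjectivity of $\Phi_n$ restricted to each counted triangle onto the corresponding triadic interval — is in fact not hard: by the explicit formula, $T_{\varphi,\capp}$ preserves the image of each monotone piece of its argument, so each $\mathbf{T}_{\varphi,k}$ and hence the limit $\varphi^*_n$ maps every non-constant base interval of $\mathcal{I}_{2n(k-1)}$ onto its full triadic interval of length $3^{-(k-1)}$; this is exactly what the paper's "by induction on $k$" refers to.
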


\begin{proof}
	Statement (a) follows from the fact that such values are taken on complete subtriangles of $\Delta$ by construction due to the horizontal parts of $\phi(x)$.
	
	Now let $r=\sum_{i=1}^{\infty}r_i 3^{-i}$ as in (b). Fix $k$, and for $r^{(k)}=\sum_{i=1}^{k}r_i 3^{-i}$, consider $I=\left[r^{(k)}, r^{(k)} + 3^{-k}\right]$. 
	Due to the way we defined the function $\Phi_{n,k}$, by induction on $k$, one can see that in this case, the number of triangles in $\tau_{2nk}$ which are 
    mapped to $I$ by $\Phi_{n,k}$ (and hence by $\Phi_n$)
	is precisely $\prod_{i=1}^{k}x_{n, r_i}$. (We remark that we use at this point that the operator $T_\varphi$ was defined using $\varphi^{\mathrm{rev}}$ instead of $\varphi$
    on decreasing intervals of its argument. Otherwise, for example inside $\left[0, \frac{1}{2^{2n}}\right]$, on which $\varphi$ is increasing, there would be $x_{n, r_i}$ base intervals of $\mathcal{I}_{4n}$ 
    mapped to $I$ for $r^{(2)} = 0 + 3^{-2}r_i$, while inside $\left[\frac{5}{2^{2n}}, \frac{6}{2^{2n}}\right]$, there would be $x_{n, 3-r_i}$ such intervals. This dichotomy would make calculations 
    much more complicated, which issue is prevented by the slightly more complicated definition of $T_\varphi$.)
    Moreover, as $r$ is in the interior of $I$, these are all the triangles which are intersected by $\Phi_n^{-1}(r)$.
	Hence if we use the sequence $(\tau_{2nk})_{k=1}^{\infty}$ as BDDS, we obtain the desired equation.
	
	As almost every number is normal, we know that for almost every $r$ the density of each ternary digit is $\frac{1}{3}$ in its ternary expansion. Thus for almost every $r$ we know that
	$$\sqrt[k]{\prod_{i=1}^{k}x_{n, r_i}}\to \sqrt[3]{x_{n,0} x_{n,1} x_{n, 2}},$$
	which yields (c).

    Statement (d) follows from $x_{n,0}x_{n,1}\geq x_{n,2}^2$, implying $\sqrt[3]{x_{n,0} x_{n,1} x_{n, 2}}\geq 3^{2n-1}-1$.
\end{proof}

Having this lemma, we can prove the following theorem:


\begin{theorem} \label{thm:sier_upper_box}
    Assume $\alpha < \alpha_n = \frac{\log 3}{2n \log 2}$. Then 
    $$ D_{\overline{B}*}(\alpha, \Delta) \geq \frac{\log \sqrt[3]{x_{n,0} x_{n,1} x_{n, 2}}}{\log 2^{2n}}=d_n\geq \frac{\log 3^{2n-1}-1}{\log 2^{2n}},$$
where 
$x_{n,0},\  x_{n,1},\  x_{n, 2}$ were defined in Lemma \ref{lemma:recursive}.

Moreover, for any $0<\alpha<1$, we have
    $\frac{\log 3}{\log 2} - 1 \leq D_{\overline{B}*}(\alpha, \Delta)$
\end{theorem}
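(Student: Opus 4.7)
The plan is to prove the two lower bounds separately.

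For the ``Moreover'' part, I would apply Theorem \ref{thm:gen_lower_bound_box} directly. The Sierpi\'nski triangle $\Delta$ is a connected self-similar set satisfying OSC, and the three defining similarities have orthogonal parts equal to the identity of $O(2)$, so $\Delta$ trivially has finitely many directions. Since $\dim_H \Delta = \log 3/\log 2 > 1$, Theorem \ref{thm:gen_lower_bound_box} yields $D_{\overline{B}*}(\alpha, \Delta) \geq \log 3/\log 2 - 1$ for every $0<\alpha<1$.

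For the main inequality, the strategy rests on Lemma \ref{lemma:upper_box_generic} combined with Theorem \ref{thm:upper_box_generic}; both apply because $\Delta$ has nice connection type by Lemma \ref{lemma:connected_is_nice_connected_self_similar}. It therefore suffices to exhibit a countable dense sequence $\{f_k\}_{k\in\mathbb{N}}$ in $C_1^\alpha(\Delta)$ such that $D_{\overline{B}*}^{f_k}(\Delta) \geq d_n$ for every $k$. Once this is done, the numerical refinement $d_n \geq \log(3^{2n-1}-1)/\log 2^{2n}$ is simply part (d) of Lemma \ref{lemma:sier_upper_box_dense_set}.

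To build the dense sequence, I would start from a countable dense sequence $\{h_k\}$ of standard piecewise affine functions in $C_1^\alpha(\Delta)$, each with H\"older constant $c_k < 1$ and $c_k \to 1^-$ (Lemma \ref{lemma:standard_piecewise_affine}). Setting $D = \mathrm{diam}(\Delta)$, Lemma \ref{lemma:high_box_holder} together with $\alpha<\alpha_n$ and $|x-y|\leq D$ gives $\eta \Phi_n \in C_{6\eta D^{\alpha_n-\alpha}}^\alpha(\Delta)$ for any $\eta > 0$. Accordingly define $f_k = h_k + \eta_k \Phi_n$ with $\eta_k = (1-c_k)/(6 D^{\alpha_n-\alpha})$, so that the H\"older constants add to at most $1$ and $f_k \in C_1^\alpha(\Delta)$. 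Denseness of $\{f_k\}$ follows because $\|f_k - h_k\|_\infty \leq \eta_k \to 0$.

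The main obstacle is verifying $D_{\overline{B}*}^{f_k}(\Delta) \geq d_n$. The heuristic is that $\Phi_n(x,y)=\varphi^*_n(y)$ has horizontal level sets of upper box dimension at least $d_n$ for almost every level (Lemma \ref{lemma:sier_upper_box_dense_set}(c)), and the fine-scale self-similar oscillation of $\eta_k\Phi_n$ should dominate the affine contribution of $h_k$ on small enough subtriangles, preserving this complexity. Concretely, on each $T\in\tau_m$ on which $h_k$ is affine, one writes $f_k|_T(x,y)=a_T x + b_T y + c_T + \eta_k \varphi^*_n(y)$; the level set $f_k^{-1}(r)\cap T$ is (for $a_T\neq 0$) a graph over $y$ whose $y$-support is the preimage, inside the $y$-range of $\Delta\cap T$, of $r-c_T$ under the one-dimensional function $y\mapsto b_T y + \eta_k\varphi^*_n(y)$. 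The key technical step is to show that at the scales $2^{-2nk}$ where the $\eta_k\varphi^*_n$ contribution exceeds the linear tilt from $h_k$, the number of $\tau_{2nk}$-triangles intersecting $f_k^{-1}(r)\cap T$ inherits, for almost every $r$, the asymptotic growth $\prod_{i=1}^k x_{n,r_i}$ identified in Lemma \ref{lemma:sier_upper_box_dense_set}(b), so that $\overline{\dim}_B(f_k^{-1}(r)) \geq d_n$. A robust alternative, should the direct covering estimate prove delicate, is to replace $\eta_k\Phi_n$ by a local, orientation-adapted copy of $\Phi_n$ on each $T$: standardness of $h_k$ implies that $h_k|_T$ has gradient perpendicular to one of three specific edges of $T$, so a suitably rotated or reflected version of $\Phi_n$ aligns its oscillation direction with that gradient, reducing the problem on $T$ to a one-dimensional level set analysis of a $\varphi^*_n$-type function whose preimages are governed by Lemma \ref{lemma:sier_upper_box_dense_set}(c). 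Once $D_{\overline{B}*}^{f_k}(\Delta) \geq d_n$ is secured for each $k$, Lemma \ref{lemma:upper_box_generic} upgrades this bound to a dense $G_\delta$ and Theorem \ref{thm:upper_box_generic} concludes $D_{\overline{B}*}(\alpha, \Delta) \geq d_n$.
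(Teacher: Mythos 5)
Your overall framework is correct and matches the paper: the ``Moreover'' statement follows immediately from Theorem \ref{thm:gen_lower_bound_box}, and for the main bound you correctly invoke Lemma \ref{lemma:upper_box_generic} / Theorem \ref{thm:upper_box_generic} via nice connection type (Lemma \ref{lemma:connected_is_nice_connected_self_similar}), start from the dense set of standard strongly piecewise affine functions supplied by Lemma \ref{lemma:standard_piecewise_affine}, and close the numerical estimate with Lemma \ref{lemma:sier_upper_box_dense_set}(d). However, the core of the proof---producing a dense family whose members actually satisfy $D_{\overline{B}*}^{f}(\Delta)\geq d_n$---is where your construction diverges from the paper's and has a genuine unresolved gap.

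You build $f_k = h_k + \eta_k\Phi_n$, i.e.\ an additive perturbation of the piecewise affine function $h_k$ by a small multiple of $\Phi_n$ (or, in your fallback, by a locally rotated copy of $\Phi_n$). On any simplex $T$ where $h_k$ is affine, the resulting level set is then governed by a one-dimensional function of the form $g(t) = b_T\, t + \eta_k\,\varphi^{*}_n(t)$. You note yourself that the ``key technical step'' is to show that the $\tau_{2nk}$-count of $g^{-1}(r')$ still grows like $\prod_{i=1}^{k} x_{n,r_i}$, but this does not follow from Lemma \ref{lemma:sier_upper_box_dense_set}(b): that lemma is derived for the untilted $\varphi^{*}_n$, whose exact combinatorial structure (precisely $x_{n,i}$ base subintervals mapped to each third of the range, at every generation) is destroyed by the affine tilt $b_T t$. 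Even at scales where the oscillation amplitude $3^{-k}$ dominates $|b_T|\,2^{-2nk}$, the level relative to $\varphi^{*}_n$ drifts across a base interval, so the per-generation multiplicity is no longer $x_{n,r_i}$ and the product formula does not transfer. This is not a minor loose end; it is precisely the counting that produces the exponent $d_n$, and without it the lower bound is not established.

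The paper sidesteps this entirely by \emph{replacing} rather than perturbing $f_0$ on each $T\in\tau_{2N}$: choose a similarity $\Psi$ mapping $T$ onto $\Delta_0$ whose orientation is matched to the standardness structure of $f_0$ (the pair of vertices of $T$ with equal $f_0$-value is sent to the pair of vertices of $\Delta_0$ with equal $\Phi$-value), find $a_N, b_N$ with $f_0|_{V(T)} = a_N\,\Phi\circ\Psi|_{V(T)} + b_N$ and $|a_N|<2^{-2N}M$, and set $f|_T = a_N\,\Phi\circ\Psi + b_N$ outright. Then $f^{-1}(r)\cap T$ is literally a similar image of a level set of $\Phi$, so Lemma \ref{lemma:sier_upper_box_dense_set} applies verbatim; the $\|f-f_0\|<\varepsilon$ and $1$-H\"older-$\alpha$ estimates then follow for $N$ large from $|a_N|<2^{-2N}M$, $\alpha<1$, and agreement with $f_0$ on $V(\tau_{2N})$. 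So the idea you raise in your ``robust alternative'' (exploit standardness to orient the $\Phi_n$-copy) is indeed the right instinct, but you must replace the affine piece, not superimpose on it; keeping $h_k$ additively leaves the tilt problem in place. A secondary, smaller issue: your density argument requires choosing $h_k$ close to a given $g$ \emph{and} with $c_k$ near $1$ (so that $\eta_k$ is small) simultaneously, which needs a short extra remark that $\{h : h$ standard strongly piecewise affine, $c$-H\"older-$\alpha$ with $c>1-\delta\}$ is dense for every $\delta>0$; this is available from Lemma \ref{lemma:standard_piecewise_affine} but should be made explicit.
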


\begin{proof}[Proof of Theorem \ref{thm:sier_upper_box}]
    The second statement is simply the application of Theorem \ref{thm:gen_lower_bound_box}. Thus it suffices to prove the first one concerning small $\alpha$s.

    Due to Lemma \ref{lemma:upper_box_generic}, it suffices to provide a dense
    set of functions such that for these functions almost every level set is large. 
    
    Due to Lemma \ref{lemma:high_box_holder}, we can fix $n$ such that
    $\Phi=\Phi_n$ is 6-Hölder-$\alpha$.

    Due to Lemma \ref{lemma:standard_piecewise_affine}, it suffices to prove that if $f_0$ is a standard strongly piecewise affine function
    and $\varepsilon>0$, we can find $f$ with $D_{\overline{B}*}^{f}(\Delta)\geq d_n 
	$ 
    such that $\|f-f_0\|<\varepsilon$.
    Let $M$ be the Lipschitz constant of $f_0$.
    Let $N \in \mathbb{N}$  be fixed  later such that $f_0$ is standard piecewise affine on the $N$th level.
    We will define $f$ so that
    it agrees with $f_0$ on $V(\tau_{2N})$. Moreover, for $T\in \tau_{2N}$, let $\Psi$ be a similarity of ratio $2^{2N}$ which maps $T$ to $\Delta_0$
    such that $f_0(v)=f_0(v')$ for some vertices $v, v'\in T$ if and only if $\Phi(\Psi(v))=\Phi(\Psi(v'))$. 
    Actually, one can find $a_N,b_N\in\mathbb{R}$
    with $f(v)=a_N\Phi(\Psi(v)) + b_N$ for any vertex $v\in T$, and $|a_N|<2^{-2N}M$ due to the Lipschitz property of $f_0$. Now extend $f$ to $T\cap \Delta$ by
    $f(x)=a_N\Phi(\Psi(x)) + b_N$. Due to the uniform continuity of $f_0$, this construction satisfies $\|f-f_0\|<\varepsilon$ 
    for large enough $N$.
    Moreover, for $T\in \tau_{2N}$ and arbitrary $x, y \in T$ we have
    $$|f(x) - f(y)|\leq 6\cdot a_N\cdot | \Psi(x) - \Psi(y)|^{\alpha} = 6\cdot a_N \cdot 2^{2N\alpha}|x-y|^{\alpha} = \mathrm o(N) |x-y|^{\alpha},$$
    due to $|a_N|<2^{-2N}M$ and $\alpha<1$. For large enough $N$, this easily yields that $f$ is 1-Hölder-$\alpha$ on $\Delta$, using that $f$ coincides with the
    $1^{-}$-Hölder-$\alpha$ function $f_0$ on $V(\tau_{2N})$.
    Finally, almost every level set consists of finitely many similar images
    of level sets of $\Phi$. That is, $D_{\overline{B}*}^{f}(\Delta)\geq d_n$ for a dense set of functions.
    It yields that $D_{\overline{B}*}(\alpha, \Delta) \geq d_{n}$.
\end{proof}

We conclude this section with three remarks concerning the above construction.

\begin{remark}
    We note that the above machinery can be applied with other choices of $\varphi_n$, which is piecewise linear so that base intervals of $\mathcal{I}_{2n}$ are mapped to the intervals
    $\left[\frac{i}{3}, \frac{i+1}{3}\right]$, and if $x_{n,0}, x_{n,1}, x_{n,2}$ denotes the number of triangles in $\tau_{2nk}$ which are mapped to each of these intervals, we can draw the conclusion
    $\frac{\log \sqrt[3]{x_{n,0} x_{n,1} x_{n, 2}}}{\log 2^{2n}} \leq D_{\overline{B}*}(\alpha, \Delta)$. Our goal is to set $x_{n,0}x_{n,1}x_{n,2}$ as large as possible.
    However, as $\varphi_n$ has to go from 0 to 1, which are at odd steps apart if we take
    steps of length $1/3$, and there are an even number of base intervals of $\mathcal{I}_{2n}$, $\varphi_n$ must be constant on at least one base intervals, and hence $x_{n,0} + x_{n,1}+ x_{n,2}\leq 3^{2n}-1$.
    To maximize the product, one should attempt to construct $\varphi_n$ so that in fact, $x_{n,0} + x_{n,1}+ x_{n,2} = 3^{2n}-1$, and the terms are as close to each other as possible. We do not know if
    our construction is optimal, however, it should be noted that it is very close to the theoretical maximum. Indeed, if $\alpha\to \alpha_n +$, by the upper bound provided by our theorem
    we know that the geometric mean of $x_0, x_1, x_2$ cannot exceed $3^{2n-1}$, while our choice yields a quantity exceeding $3^{2n-1}-1$, by (d) of Lemma \ref{lemma:sier_upper_box_dense_set}.
    Thus we cannot hope for a much better lower bound without using other ideas.  
\end{remark}
    
\begin{remark} \label{remark:bounds_close}
    While it is implicitly stated in the previous remark, we would like to emphasize and make more explicit that if $\alpha\to \alpha_n +$, 
    then the upper estimate and the lower esimate provided by our theorems are remarkably close to each other. Notably,
    due to (d) of Lemma \ref{lemma:sier_upper_box_dense_set}, we know that the lower bound exceeds $\frac{\log 3^{2n-1}-1}{\log 2^{2n}}$, while
    the upper bound tends to $\frac{\log 3^{2n-1}}{\log 2^{2n}}$. The difference of these bounds is $\frac{\log\frac{3^{2n-1}}{3^{2n-1}-1}}{\log 2^{2n}}$, where the numerator
    is at most $\frac{1}{3^{2n-1}-1}$, due to $\log(1+t)\leq t$. That is, the difference is at most $\frac{1}{(3^{2n-1}-1)\cdot 2n\cdot \log 2}$, which is exponentially small in $n$.
\end{remark}

\begin{remark} \label{remark:generalize_construction}
    To construct $\varphi_n$, one could use other uniform partitions of the range $[0,1]$ instead of the $\frac{1}{3}$-based. For example, one could construct a similar almost cyclic $\varphi$ such that
    base intervals of $\mathcal{I}_n$ are mapped to $\left[0, \frac{1}{2}\right]$ and to $\left[\frac{1}{2}, 1\right]$, slightly modified in the last base intervals
    to enforce $\varphi(0)=0$, $\varphi(1)=1$, as above. This construction yields strong lower bounds if
    the numbers of triangles mapped to $\left[0, \frac{1}{2}\right]$ and to $\left[\frac{1}{2}, 1\right]$ are roughly the same. However, it would not hold with this construction, the number of triangles of the two type
    would differ by a scalar multiplier independent of $n$. Thus one has to introduce a more complicated construction, however, it can be carried out, we left the details to the reader. This
    argument leads to a lower bound which is exponentially good near $\alpha_n = \frac{1}{n}$. We conjecture that by using the uniform partition of the range $[0, 1]$ to $k$ pieces,
    and using an appropriate piecewise linear base function,
    this can be extended to any $\alpha_n = \frac{\log k}{n \log 2}$. However, defining this base function, and carrying out a similar calculation to the one presented in Lemma \ref{lemma:recursive}
    does not seem to be practically feasible. It would be very interesting to find a general approach to make this line of thought applicable.
\end{remark}

\section{Open problems} \label{sec:open_problems}

In the previous sections we expressed our interest in a number of questions we cannot yet answer. Below we summarize these problems in a concise way.

\begin{question} \label{quest:wilder_functions}
    Is there a robust method to construct functions for which one can effectively estimate the quantity $D_*^f(F)$, 
    but whose level sets do not consist of planar cross-sections of $F$? 
    Lacking such constructions severely limits our ability to estimate $D_*(\alpha, F)$, $D_{\underline{B}*}(\alpha, F)$, $D_{\overline{B}*}(\alpha, F)$,
    as discussed after Theorem \ref{theorem:lower_box_sier}.
\end{question}

\begin{question}
    How much one can relax the condition of $F$ having nice connection type in Lemma \ref{lemma:upper_box_generic} and in Theorem \ref{thm:upper_box_generic}? 
    We have noted that it cannot be dropped completely.
\end{question}

\begin{question}
    How much one can relax the homogeneity conditions imposed on $F$ in Theorem \ref{thm:gen_lower_bound_box}? Prove or disprove that we need self-similar sets with finitely many directions for the validity
    of this theorem.
\end{question}

\begin{question}
    Can one reduce the gap between the lower and upper bounds on $D_{\overline{B}*}(\alpha, F)$ provided by Theorem \ref{thm:sier_upper_box}? 
    Remark \ref{remark:generalize_construction} strongly suggests that the lower bound can be improved significantly, however, different $\alpha$s need varying constructions. Can one produce
    a unified approach, which yields a family of functions through which we can estimate $D_{\overline{B}*}(\alpha, F)$ for every $\alpha$?
\end{question}


\bibliographystyle{amsplain} 
\bibliography{sierbox} 

\end{document}